\documentclass[11pt]{amsart}
\usepackage{amsmath,amssymb,amsthm,mathtools}
\usepackage[margin=1in]{geometry}
\usepackage{enumitem}
\usepackage{amsrefs}
\usepackage[hidelinks]{hyperref}

\numberwithin{equation}{section}

\theoremstyle{plain}
  \newtheorem{thm}{Theorem}[section]
  \newtheorem*{mthm}{Main Theorem}
  \newtheorem{lem}[thm]{Lemma}

  \newtheorem{prop}[thm]{Proposition}

\theoremstyle{definition}
  \newtheorem{defn}[thm]{Definition}

  \newtheorem{nota}[thm]{Notation}

\theoremstyle{remark}

  \newtheorem*{ack}{Acknowledgment}

\newcommand{\R}{\mathbb R}
\newcommand{\cL}{\mathcal L}
\newcommand{\ObsDiam}{\operatorname{ObsDiam}}
\newcommand{\PartDiam}{\operatorname{PartDiam}}
\newcommand{\Var}{\operatorname{Var}}
\newcommand{\Lip}{\operatorname{Lip}}
\newcommand{\diam}{\operatorname{diam}}

\title[Observable diameter of product measures]{Observable diameter of product measures\\
with a power-type interior zero}
\author{Takashi Shioya}
\address{Mathematical Institute, Tohoku University, Sendai 980-8578, Japan}
\email{shioya@math.tohoku.ac.jp}
\date{}
\subjclass[2020]{Primary 53C23; Secondary 60E15}
\keywords{metric measure spaces, concentration of measure, observable diameter,
product measures, transport-entropy inequalities, Poincar\'e inequalities}
\thanks{This work was supported by JSPS KAKENHI Grant Numbers 24K06729.}

\makeatletter
\@namedef{subjclassname@2020}{
\textup{2020} Mathematics Subject Classification}
\makeatother

\begin{document}

\begin{abstract}
We determine the asymptotic order of the observable diameter of
$\ell^p$-product spaces associated with a probability measure on $[0,1]$
whose density has a power-type interior zero. Different vanishing orders
are allowed on the two sides of the zero. For every $1\le p\le\infty$,
we obtain sharp two-sided estimates and show that the asymptotic order is
governed by the larger of the two vanishing orders. In the case $p=2$,
the transition occurs exactly at the threshold for the spectral gap.
\end{abstract}

\maketitle

\section{Introduction}

The concentration of measure phenomenon was first discovered by P.~L\'evy
\cite{Levy} in his study of high-dimensional spheres and was brought to
prominence by V.~Milman \cite{MilmanDvoretzky} in the early 1970s through its
applications to asymptotic geometric analysis; see also \cite{Ledoux} and
\cite{ShioyaBook} for historical accounts and general background.
Within the theory of metric measure spaces (mm-spaces, for short), Gromov \cite{Gromov}
introduced the observable diameter as a numerical invariant of concentration of
measure, measuring how large an mm-space can appear through real-valued
$1$-Lipschitz functions. He also studied product spaces from this viewpoint.
In particular, his book contains an
estimate of the observable diameter of $\ell^1$-product spaces based on the
Laplace--Gibbs functional \cite{Gromov}*{\S~$3\frac12$.62}.
Gromov also gave a rough sketch of the corresponding upper estimate for
$p>1$, and Ozawa and Shioya \cite{OzawaShioyaProduct} later gave a complete
proof for all $1\le p<\infty$.  The resulting general upper bound has order
$n^{1/(2p)}$, and for $p>1$ a matching lower bound holds under a uniform
discreteness assumption.  Combined with the phase-transition framework
developed in \cite{OzawaShioyaLimit}, this gives a critical scale of
order $n^{-1/(2p)}$ in the uniformly discrete case.  For continuous spaces,
however, the corresponding lower bound need not hold, and the observable
diameter can have a substantially smaller order.  \cite{ShioyaProblems}*{Problem~9, p.~302}
asks for an estimate of
the observable diameter of the $n$-fold product of the unit interval, equipped
with the Euclidean metric and the product measure induced by a probability
density $\rho$ on the interval. It suggests that the behavior of $\rho$ near
its zeros should affect the asymptotic order. The
present paper gives a solution to that problem for densities with a power-type
interior zero, while allowing different powers on the two sides of the zero and,
more generally, $\ell^p$-product metrics.

Spectral gaps provide a second viewpoint.  The Poincar\'e inequality
tensorizes, and the Gromov--Milman concentration theorem
\cite{GromovMilman} then yields
dimension-free concentration for $\ell^2$-product powers.  For a compact
connected Riemannian manifold $X$, the identity
$\lambda_1(X^n)=\lambda_1(X)$ therefore gives a dimension-independent
upper bound for the observable diameter of $X^n$.  This classical
consequence is recorded explicitly in \cite{OzawaShioyaProduct}.

There is also a converse principle.  Gozlan, Roberto, and Samson \cite{GRS}
proved that, for an mm-space, a
nontrivial dimension-free concentration inequality for its $\ell^2$-product
powers is equivalent to a Poincar\'e inequality.  Thus the spectral gap is not
merely one sufficient source of dimension-free product concentration: in this
sense it characterizes the phenomenon.  Since dimension-free concentration
controls observable diameter at every fixed level, this viewpoint is especially
relevant for the present model.  The vanishing of the density creates a
one-dimensional bottleneck, and, for $p=2$, the threshold at which the
Poincar\'e inequality fails coincides with the threshold at which the observable
diameter changes its asymptotic order.

We now state the main theorem of this paper.  Let $\rho:[0,1]\to[0,\infty)$ be measurable,
assume that $\int_0^1\rho(x)\,dx=1$, and put $d\mu:=\rho\,d\cL^1$.
Assume that there exist $a\in(0,1)$, $\delta>0$, exponents
$\alpha_-,\alpha_+>0$, and constants $0<c_-\le C_-<\infty$ and
$0<c_+\le C_+<\infty$ such that
\begin{equation}\label{eq:power}
 \begin{aligned}
 c_-(a-x)^{\alpha_-}&\le \rho(x)\le C_-(a-x)^{\alpha_-}
 &&\text{for $\cL^1$-a.e.\ }x\in[0,1]\cap(a-2\delta,a),\\
 c_+(x-a)^{\alpha_+}&\le \rho(x)\le C_+(x-a)^{\alpha_+}
 &&\text{for $\cL^1$-a.e.\ }x\in[0,1]\cap(a,a+2\delta).
 \end{aligned}
\end{equation}
Assume moreover that there exists $c_\delta>0$ such that
\begin{equation}\label{eq:away-lower}
 \rho(x)\ge c_\delta
 \quad\text{for $\cL^1$-a.e.\ }x\in[0,1]\text{ with }|x-a|\ge\delta.
\end{equation}
Changing $\rho$ on the null set $\{a\}$ if necessary, we always take the
representative satisfying $\rho(a)=0$.  Put
\[
 \alpha_*:=\max\{\alpha_-,\alpha_+\}.
\]
For $1\le p\le\infty$, put
\[
 X_{n,p}:=([0,1]^n,d_p,\mu^{\otimes n}),
\]
where
\[
 d_p(x,y):=\left(\sum_{i=1}^n|x_i-y_i|^p\right)^{1/p}
 \quad(1\le p<\infty),
 \qquad
 d_\infty(x,y):=\max_{1\le i\le n}|x_i-y_i|.
\]
For positive sequences $A_n$ and $B_n$, the notation
$A_n\asymp_{\rho,p,\kappa}B_n$ means that there exist constants
$c,C>0$, depending only on $\rho$, $p$, and $\kappa$, such that
$cB_n\le A_n\le CB_n$ for all sufficiently large $n$.

\begin{mthm}
Fix $0<\kappa<1$. Then, as $n\to\infty$,
\[
 \ObsDiam(X_{n,p};-\kappa)\asymp_{\rho,p,\kappa}
 \begin{cases}
 n^{1/2},&p=1,\\[1mm]
 n^{1/p-1/2},&1<p\le2,\ \alpha_*<1/(p-1),\\[1mm]
 n^{1/p-1/2}(\log n)^{1/p},&1<p\le2,\ \alpha_*=1/(p-1),\\[1mm]
 n^{\frac1{2p}-\frac1{2(\alpha_*+1)}},&1<p\le2,\ \alpha_*>1/(p-1),\\[1mm]
 1,&2<p<\infty,\ \alpha_*\le p-1,\\[1mm]
 n^{\frac1{2p}-\frac1{2(\alpha_*+1)}},&2<p<\infty,\ \alpha_*>p-1,\\[1mm]
 1,&p=\infty.
 \end{cases}
\]
\end{mthm}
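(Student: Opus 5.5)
The plan is to prove matching upper and lower bounds separately in each regime, using the one-dimensional structure of $\mu$ near $a$ to reduce everything to weighted Hardy-type inequalities on $[0,1]$.

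\textbf{Trivial regimes.} For $p=\infty$ the bound $\ObsDiam\le\diam=1$ is immediate, and the lower bound comes from the coordinate function $x\mapsto x_1$, which is $1$-Lipschitz and has a non-degenerate law. For $p=1$ the upper bound $O(n^{1/2})$ follows from bounded differences (McDiarmid), which is the Laplace-functional estimate of \cite{OzawaShioyaProduct}. The lower bound uses $f=\sum_i x_i$, which is $1$-Lipschitz for $d_1$ and has fluctuations of order $n^{1/2}$ by the central limit theorem.

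\textbf{Lower bounds for $1<p<\infty$.} Let $q=p/(p-1)$ be the conjugate exponent. The test functions are $F=\lambda\sum_i g(x_i)$, and Hölder gives the local Lipschitz bound $|\nabla F|_{q}=\lambda(\sum_i|g'(x_i)|^q)^{1/q}$.

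\begin{enumerate}[label=(\roman*)]
\item Taking $g(t)=t$ gives the universal bound $n^{1/p-1/2}$.
\item To exploit the zero, let $g_\varepsilon$ be $0$ left of $a-\varepsilon$ and $1$ right of $a$ (or use the side carrying $\alpha_*$ only). Interpolate in between with the profile $g_\varepsilon'\propto\rho^{-1/(q-1)}$ that optimizes the ratio of variance to $L^q(\mu)$-energy.
\item On a set $A$ of measure $\ge 1-\kappa/2$, the number of coordinates in the transition window concentrates around $n\mu(\text{window})$. Hence $F$ is $L$-Lipschitz on $A$ with $L\approx\lambda\,(n\int|g_\varepsilon'|^q\,d\mu)^{1/q}$, up to a constant. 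Extend $F|_A$ by McShane; the extension agrees with $F$ on $A$, so its partial diameter is essentially that of $F$. The partial diameter of $F$ is of order $\lambda\, n^{1/2}\,\mathrm{sd}(g_\varepsilon)$ by the CLT or Paley--Zygmund.
\item Optimizing over $\varepsilon$ gives the three $1<p\le2$ regimes. The energy $\int|g_\varepsilon'|^q\rho$ stays bounded, blows up logarithmically, or blows up polynomially according as $\alpha_*<q-1=1/(p-1)$, $=$, or $>$. The constraint that the window contains enough sample points, roughly $n\varepsilon^{\alpha_*+1}\gtrsim1$, balanced against the gain from small $\varepsilon$, should produce the exponent $\frac1{2p}-\frac1{2(\alpha_*+1)}$ and the $(\log n)^{1/p}$ factor. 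The same construction with $p>2$ gives the polynomial growth when $\alpha_*>p-1$.
\end{enumerate}

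\textbf{Upper bounds.} The model is the spectral-gap argument. By Muckenhoupt's criterion for weighted Hardy inequalities on the interval split at $a$, $\mu$ satisfies a $(q,2)$- or $(p,p)$-type Poincaré inequality exactly below the thresholds $\alpha_*<1/(p-1)$, resp. $\alpha_*\le p-1$. Both one-sided Hardy constants are needed, which is why $\alpha_*$ governs. Such an inequality tensorizes along $\ell^p$ products and yields, via Gromov--Milman style exponential concentration or a transport-entropy inequality in the spirit of \cite{GRS}, the bounds $n^{1/p-1/2}$ (for $p\le2$) and $O(1)$ (for $p>2$).

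In the critical and supercritical regimes the inequality fails, and I would truncate. Replace $\mu$ by a comparison in which the window $|x-a|<\varepsilon_n$ is collapsed; this is cheap in Wasserstein or $d_p$ terms since few coordinates land there. Then apply the Poincaré argument to the truncated measure, whose Hardy constant grows like a power (or logarithm) of $1/\varepsilon_n$, and add the transport cost $(n\mu(\text{window})\,\varepsilon_n^{p})^{1/p}$. Choosing $\varepsilon_n$ to balance these two terms should reproduce the lower-bound exponent.

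\textbf{Main obstacle.} I expect the hardest step to be this supercritical upper bound: making the truncation/transport comparison sharp enough to hit exactly $n^{\frac1{2p}-\frac1{2(\alpha_*+1)}}$ with no logarithmic loss. The dependence of the tensorized concentration constant on the truncated Hardy constant must be tracked precisely.
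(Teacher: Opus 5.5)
\textbf{Lower bound, steps (iii)--(iv).} The genuine gap is here. A bound on $|\nabla F|_q$ at the points of a typical set $A$ does not make $F|_A$ Lipschitz with that constant. Heuristically, you can pass between typical points by moving the $\sim\sqrt n$ coordinates that lie within $r_n\asymp n^{-1/(2(\alpha_*+1))}$ of $a$ to the other side of $a$. This costs only about $n^{1/(2p)}r_n$ in $d_p$. It changes $\sum_i g_\varepsilon(x_i)$ by order $\sqrt n$, its full CLT fluctuation. Yet for $\varepsilon\ll r_n$ both endpoints satisfy your gradient bound.

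At the scale you propose, the Lipschitz claim is in fact false. With the optimal profile, your heuristic gives partial diameter $\asymp n^{1/p-1/2}\bigl(\int_W\rho^{-(p-1)}\bigr)^{1/p}$. (The energy $\int|g_\varepsilon'|^q\,d\mu=\bigl(\int_W\rho^{-(p-1)}\bigr)^{1-q}$ tends to $0$; it does not blow up.) In a supercritical case with inner cutoff $n\varepsilon^{\alpha_*+1}\approx1$, this is $n^{\frac12-\frac1{\alpha_*+1}}$. That exceeds the true order $n^{\frac1{2p}-\frac1{2(\alpha_*+1)}}$ by the factor $n^{\frac12(\frac1q-\frac1{\alpha_*+1})}$. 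For $p>2$ and $1<\alpha_*\le p-1$, the same computation predicts growth where the truth is order $1$. So the McShane step would contradict the upper bound, and no balancing against $n\varepsilon^{\alpha_*+1}\gtrsim1$ produces the stated exponent.

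Taking $\varepsilon=r_n$ in your formula does give the right exponent, but you still need a genuinely $1$-Lipschitz observable. The paper uses $f_n=d_p(\cdot,\{N_n\le m_n\})$, where $N_n=\#\{i:x_i>a\}$ and $m_n$ is a median. On $\{N_n\ge m_n+c_1\sqrt n\}\cap\{\#\{i:a<x_i\le a+r_n\}\le\frac{c_1}{2}\sqrt n\}$, every point of the half set differs from $x$ in at least $\frac{c_1}{2}\sqrt n$ coordinates by at least $r_n$. Hence $f_n\gtrsim n^{1/(2p)}r_n$ there.

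In the critical case the paper never controls a Lipschitz constant of the log ramp. It uses the Hopf--Lax inf-convolution $Q_\tau h_r$, the enlargement property of $\ObsDiam$, and Berry--Esseen. Your energy heuristic enters only through the mean defect $\int(h_r-Q_\tau h_r)\,d\mu\lesssim\tau L_r^{1-q}$. Block amplification then passes from a level $\kappa_0<1/2$ to every $\kappa$.

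\textbf{Upper bounds.} Two further repairs are needed. First, a symmetric window $|x-a|<\varepsilon_n$ is not enough when $\alpha_-\ne\alpha_+$. The transport cost is governed by the smaller exponent and the Poincar\'e/Hardy constant by the larger, so a single $\varepsilon_n$ cannot balance both. For instance, $p=2$, $\alpha_+=3$, $\alpha_-=\frac12$ yields $n^{2/11}$ instead of $n^{1/8}$. The paper flattens the density independently on each side, preserving side masses, at scales $r_\pm=n^{-1/(2(\alpha_\pm+1))}$. It flattens rather than collapses, since collapsing would create an atom.

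Second, for $p>2$ the $(p,p)$-Poincar\'e inequality fails at $\alpha_*=p-1$. Its Muckenhoupt quantity contains $\int\rho^{-1/(p-1)}$ across $a$, which diverges logarithmically there. So a truncation built on it yields a logarithmically growing constant where the theorem asserts order $1$. You would also have to justify tensorizing such an inequality over $\ell^p$-products.

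The paper instead proves $T_p(CA_{\mathbf r})$ for the flattened measure, with $A_{\mathbf r}=\max\{1,r_-^{p-1-\alpha_-},r_+^{p-1-\alpha_+}\}$. It combines three ingredients. The first is uniform $T_2$ on each side, via Muckenhoupt and Gozlan's criterion. The second is the splitting $H(\nu\mid\mu_{\mathbf r})=D_{\mathrm{Ber}}(\theta\Vert\lambda)+\theta H(\nu_-\mid\mu_{\mathbf r,-})+(1-\theta)H(\nu_+\mid\mu_{\mathbf r,+})$. The third is a mass-shift estimate: pushing mass $e\ge r^{\alpha+1}$ through one side of the bottleneck costs $W_p^p\lesssim e^2+e^{1+p/(\alpha+1)}$. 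This is $O(e^2)$ exactly when $\alpha\le p-1$. The paper then tensorizes $T_p$ (Gozlan--L\'eonard), applies Marton's argument, and finishes with the Wasserstein comparison.

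Apart from the two-sided scales, your $1<p\le2$ upper bound is the paper's argument: the $(q,2)$ inequality with constant $J(\rho_{\mathbf r})^{2/p}$, Efron--Stein tensorization, and the $W_p$ cost.
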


The theorem shows that, although the density may vanish with different powers
on the two sides of $a$, the asymptotic order is governed only by the larger
exponent $\alpha_*$.  For $1<p\le2$, the quantity
\[
 \int_0^1\rho(x)^{-(p-1)}\,dx
\]
is finite exactly when both $\alpha_-$ and $\alpha_+$ are smaller than
$1/(p-1)$, equivalently when $\alpha_*<1/(p-1)$.  If
$\alpha_*=1/(p-1)$, at least one side has logarithmic divergence and neither
side has polynomial divergence.  If $\alpha_*>1/(p-1)$, the polynomial
divergence on the side with the larger exponent dominates.  These three cases
are reflected in Main Theorem.

In particular, when $p=2$, the critical value is $\alpha_*=1$.  By the
one-dimensional Muckenhoupt criterion, $\mu$ has a positive spectral gap
exactly when $\alpha_-<1$ and $\alpha_+<1$, or equivalently when
$\alpha_*<1$.  Consequently
\[
 \ObsDiam(X_{n,2};-\kappa)\asymp
 \begin{cases}
 1,&\alpha_*<1,\\
 (\log n)^{1/2},&\alpha_*=1,\\
 n^{\frac14-\frac1{2(\alpha_*+1)}},&\alpha_*>1.
 \end{cases}
\]
Thus the loss of the spectral gap is again visible directly in the asymptotic
observable diameter.  For $2<p<\infty$, a different threshold,
$\alpha_*=p-1$, appears, so the spectral gap alone does not determine the
observable-diameter scale in that range.

We briefly indicate the proof strategy.  For $1<p\le2$, the upper bounds are
based on a weighted variance estimate.  We flatten the density independently
on the two sides of $a$, using separate regularization scales $r_-$ and $r_+$,
and compare the regularized and original product measures in Wasserstein
distance.  Choosing the two scales according to the corresponding exponents
$\alpha_-$ and $\alpha_+$ yields the sharp upper bounds.

For the lower bounds, the subcritical case follows from the $\ell^1$-product
lower bound of Ozawa and Shioya \cite{OzawaShioyaProduct}*{Theorem~1.3}, together
with the comparison $n^{1/p-1}d_1\le d_p$.  Above the critical exponent, a
majority-type observable is placed on a side where the vanishing order equals
$\alpha_*$.  At the critical exponent, a one-sided logarithmic ramp on a
critical side gives the required Hopf--Lax estimate, while Berry--Esseen
controls its fluctuations.

For $2<p<\infty$, we use a transport--entropy inequality of type $T_p$,
which controls the $p$-Wasserstein distance to the regularized measure in terms
of relative entropy.  We again regularize the two sides independently.  The
corresponding $T_p$ constant is governed by
\[
 \max\{1,r_-^{p-1-\alpha_-},r_+^{p-1-\alpha_+}\}.
\]
The transport costs within the two sides remain uniformly controlled; only the
cost of transferring mass through the bottleneck degenerates.  Tensorization,
Wasserstein comparison, and the two optimal regularization scales then give the
upper bound.

These arguments concern $1<p<\infty$.  The two endpoints are unchanged.  For
$p=\infty$, the metric diameter of $([0,1]^n,d_\infty)$ is $1$, while a
coordinate projection gives a positive lower bound independent of $n$.  For
$p=1$, the upper and lower bounds follow from
\cite{OzawaShioyaProduct}*{Theorems~1.1 and~1.3}, respectively.

For $1<p<\infty$, the resulting picture may again be viewed as a continuous
counterpart to the phase-transition framework for product mm-spaces developed
in \cite{OzawaShioyaLimit}: the scale is determined by the more singular side
of the interior bottleneck.

\section{Preliminaries}

\begin{defn}[mm-space]
An \emph{mm-space} is a triple $(X,d,\nu)$ consisting of a complete separable metric
space $(X,d)$ and a Borel probability measure $\nu$ on $X$.
\end{defn}
\begin{defn}[Partial diameter]
For an mm-space $(X,d,\nu)$ and $0<\kappa<1$, define the
\emph{partial diameter} by
\[
 \PartDiam(X,d,\nu;1-\kappa)
 :=\inf\{\diam B\mid B\subset X\text{ Borel},\ \nu(B)\ge1-\kappa\}.
\]
We also write $\PartDiam(\nu;1-\kappa)$ for
$\PartDiam(X,d,\nu;1-\kappa)$ when the underlying mm-space is clear.
\end{defn}

\begin{lem}\label{lem:positive-partdiam}
Let $\nu$ be a Borel probability measure on $[0,1]$ such that
$\nu\ll\cL^1$. Then, for every $0<\kappa<1$,
$\PartDiam(\nu;1-\kappa)>0$.
\end{lem}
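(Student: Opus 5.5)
We argue by contradiction, using the absolute continuity of the distribution function of $\nu$. Suppose $\PartDiam(\nu;1-\kappa)=0$ for some $0<\kappa<1$. By the definition of partial diameter, for every $\varepsilon>0$ there is a Borel set $B\subset[0,1]$ with $\nu(B)\ge1-\kappa$ and $\diam B<\varepsilon$. Any subset of $[0,1]$ with diameter less than $\varepsilon$ lies in a closed interval $I$ of length at most $\varepsilon$; for instance, $I=[\inf B,\sup B]$ works. Hence $\nu(I)\ge\nu(B)\ge1-\kappa>0$.

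The key step is a uniform smallness statement, namely
\[
 \lim_{\varepsilon\to0}\ \sup\{\nu(I)\mid I\subset[0,1]\text{ an interval},\ |I|\le\varepsilon\}=0.
\]
To prove it, write $d\nu=f\,d\cL^1$ with $f\in L^1([0,1])$, which is possible by the Radon--Nikodym theorem since $\nu\ll\cL^1$. The distribution function $F(t)=\nu([0,t])=\int_0^t f\,d\cL^1$ is then continuous on the compact interval $[0,1]$, hence uniformly continuous. For $I$ with endpoints $s\le t$ we have $\nu(I)\le F(t)-F(s)+\nu(\{s\})$, and $\nu(\{s\})=0$ because $\nu\ll\cL^1$. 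Uniform continuity of $F$ therefore gives the displayed limit. Alternatively, one can invoke the absolute continuity of the integral: for every $\eta>0$ there is $\varepsilon>0$ such that $\cL^1(A)<\varepsilon$ implies $\int_A f<\eta$.

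To finish, choose $\eta=(1-\kappa)/2$ and let $\varepsilon$ be the corresponding scale. Every interval $I$ of length at most $\varepsilon$ then satisfies $\nu(I)<1-\kappa$, which contradicts the first paragraph. Therefore $\PartDiam(\nu;1-\kappa)>0$.

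No step here is a real obstacle. The only point requiring care is that the bound must be uniform over all short intervals, not merely small for each fixed one. This uniformity is exactly what uniform continuity of $F$, or absolute continuity of the integral, supplies.
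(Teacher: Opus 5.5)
Your proof is correct and is essentially the paper's argument: a set of diameter less than $\varepsilon$ lies in an interval of length less than $\varepsilon$, and absolute continuity of the integral (equivalently, uniform continuity of the atomless distribution function $F$) forces every such interval to have $\nu$-mass below $1-\kappa$. The paper states this directly rather than by contradiction. It also allows Borel $B\subset\R$, reducing to $[0,1]$ by intersecting, and your argument covers that case just as well.
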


\begin{proof}
By the absolute continuity of the integral, there exists $\varepsilon>0$ such
that $\cL^1(A)<\varepsilon$ implies $\nu(A)<1-\kappa$ for every Borel
$A\subset[0,1]$. If $B\subset\R$ is Borel and $\diam B<\varepsilon$, then
$B\cap[0,1]$ is contained in an interval of length less than $\varepsilon$,
and hence $\nu(B)=\nu(B\cap[0,1])<1-\kappa$. Therefore every Borel set
$B\subset\R$ with $\nu(B)\ge1-\kappa$ has $\diam B\ge\varepsilon$.
\end{proof}

\begin{defn}[Observable diameter]
For an mm-space $(X,d,\nu)$ and $0<\kappa<1$, define the
\emph{observable diameter} by
\[
 \ObsDiam(X,d,\nu;-\kappa)
 :=\sup\{\PartDiam(f_*\nu;1-\kappa)\mid f:X\to\R\text{ is $1$-Lipschitz}\}.
\]
See also \cite{ShioyaBook}*{Definition~2.13}.
\end{defn}

Note that $\ObsDiam(X,d,\nu;-\kappa)\le\PartDiam(X,d,\nu;1-\kappa)<\infty$
for every $0<\kappa<1$.

\begin{defn}[Couplings and Wasserstein distance]
For Borel probability measures $\sigma$ and $\tau$ on a metric space $(X,d)$,
let $\Pi(\sigma,\tau)$ denote the set of \emph{couplings} of $\sigma$ and
$\tau$, that is, the Borel probability measures $\pi$ on $X\times X$ whose
first and second marginals are $\sigma$ and $\tau$, respectively. For
$1\le p<\infty$, define the \emph{$p$-Wasserstein distance} by
\[
 W_p(\sigma,\tau)
 :=\left(
 \inf_{\pi\in\Pi(\sigma,\tau)}
 \int_{X\times X} d(x,y)^p\,d\pi(x,y)
 \right)^{1/p},
\]
with the convention that the infimum is $+\infty$ if every coupling has
infinite $p$-cost.
\end{defn}

\begin{nota}[Convention on constants] \label{nota}
Within a given proof, the positive constants $c$ and $C$ may change from line to
line, whereas named constants remain fixed after their definition within that
proof. For nonnegative quantities $A$ and $B$, we write $A\lesssim B$ if there
exists a constant $C>0$ such that $A\le CB$, and we write $A\gtrsim B$ if
$B\lesssim A$. We write $A\asymp B$ if both $A\lesssim B$ and $A\gtrsim B$
hold. Unless otherwise indicated, the implicit constants in $\lesssim$,
$\gtrsim$, and $\asymp$ may depend on $\rho$, $p$, $\kappa$, the fixed data
in \eqref{eq:power}--\eqref{eq:away-lower}, and fixed auxiliary parameters
appearing in the relevant statement or proof, but are uniform in all parameters
that are allowed to vary there. When $n$ is one of these parameters, it is
understood, unless otherwise indicated, to range over all sufficiently large
integers.
\end{nota}

\begin{defn}[Relative entropy and $T_p$ inequality]
For Borel probability measures $\nu$ and $\sigma$ on the same measurable
space $X$, define the \emph{relative entropy} by
\[
 H(\nu\mid\sigma)
 :=\begin{cases}
 \displaystyle\int_X \log\left(\frac{d\nu}{d\sigma}\right)\,d\nu,
 &\nu\ll\sigma,\\[2mm]
 +\infty,&\nu\not\ll\sigma.
 \end{cases}
\]
A probability measure $\sigma$ on a metric space is said to satisfy the
\emph{$T_p(C)$ inequality} if
$W_p(\nu,\sigma)^p\le C H(\nu\mid\sigma)$ for every Borel probability
measure $\nu$.
\end{defn}

\subsection{Berry--Esseen theorem}

Let $\Phi$ denote the distribution function of the standard Gaussian measure on $\R$ and $\mathbb E\xi$ the expectation of a random variable $\xi$.
See \cite{Petrov}*{Chapter~V, \S2, Theorem~4} for the proof of the following.

\begin{thm}[Berry--Esseen]\label{thm:BE}
There exists a universal constant $C_{\mathrm{BE}}$ such that, if
$\xi_1,\dots,\xi_n$ are independent and identically distributed real random
variables satisfying
\[
 \mathbb E\xi_1=0,
 \qquad
 \sigma^2:=\mathbb E\xi_1^2>0,
 \qquad
 \mathbb E|\xi_1|^3<\infty,
\]
then
\[
 \sup_{z\in\R}\left|
 \Pr\left(\frac{\sum_{i=1}^n\xi_i}{\sigma\sqrt n}\le z\right)-\Phi(z)
 \right|
 \le \frac{C_{\mathrm{BE}}\mathbb E|\xi_1|^3}{\sigma^3\sqrt n}.
\]
\end{thm}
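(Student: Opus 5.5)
The paper cites \cite{Petrov} for Theorem~\ref{thm:BE}; if I were to prove it, I would follow the classical Fourier-analytic route of Esseen. By scaling we may assume $\sigma=1$; put $\beta:=\mathbb E|\xi_1|^3$, so $\beta\ge1$ by Lyapunov's inequality. Let $F_n$ be the distribution function of $S_n/\sqrt n$, where $S_n=\sum_{i=1}^n\xi_i$, and let $f_n(t)=\varphi(t/\sqrt n)^n$ be its characteristic function, where $\varphi$ is the characteristic function of $\xi_1$. If $\beta/\sqrt n$ exceeds a fixed absolute constant, the inequality is trivial because the left side is at most $1$. So we may assume $\beta/\sqrt n$ is small.

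The first step is Esseen's smoothing inequality. For a distribution function $F$, a function $G$ with $G(-\infty)=0$, $G(+\infty)=1$ and $|G'|\le m$, and any $T>0$,
\[
 \sup_z|F(z)-G(z)|\le \frac1\pi\int_{-T}^{T}\left|\frac{f(t)-g(t)}{t}\right|dt+\frac{24m}{\pi T}.
\]
I would prove it by convolving $F-G$ with the Fejér-type kernel $\frac{1-\cos Tx}{\pi Tx^2}$, whose Fourier transform is supported in $[-T,T]$. Fourier inversion then bounds the smoothed difference by the integral term. A short argument near a point where $|F-G|$ nearly attains its supremum compares the smoothed and unsmoothed differences, and this produces the error term $24m/(\pi T)$. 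We apply it with $G=\Phi$, $m=(2\pi)^{-1/2}$, and $T=c\sqrt n/\beta$ for a small absolute constant $c$.

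The second and main step is the characteristic-function estimate
\[
 |f_n(t)-e^{-t^2/2}|\le C\frac{\beta}{\sqrt n}|t|^3e^{-t^2/4}\qquad\text{for }|t|\le T.
\]
Dividing by $|t|$ and integrating gives a contribution of order $\beta/\sqrt n$, and the smoothing error term $24m/(\pi T)$ is of the same order, which yields the theorem.

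To prove the estimate, I would first Taylor expand, using $\mathbb E\xi_1=0$ and $\mathbb E\xi_1^2=1$:
\[
 |\varphi(s)-1+s^2/2|\le \beta|s|^3/6 .
\]
For $|t|\le T$ with $c$ small, this gives
\[
 |\varphi(t/\sqrt n)|\le 1-\frac{t^2}{2n}+\frac{\beta|t|^3}{6n^{3/2}}\le e^{-t^2/(3n)},
\]
so all factors lie in the unit disc with Gaussian-type decay. I would then use the telescoping bound $|a^n-b^n|\le n|a-b|\max(|a|,|b|)^{n-1}$ with $a=\varphi(t/\sqrt n)$ and $b=e^{-t^2/(2n)}$. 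Here $|a-b|\lesssim \beta|t|^3/n^{3/2}+t^4/n^2$, and $t^4/n^2\le \beta|t|^3/n^{3/2}$ on the range $|t|\le T$ because $\beta\ge1$. This gives the displayed bound with $e^{-t^2(n-1)/(3n)}\le Ce^{-t^2/4}$ for $n\ge 4$.

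The main obstacle is keeping this step on the full range $|t|\le c\sqrt n/\beta$, rather than only $|t|\lesssim n^{1/6}$. Controlling $|\varphi|$ by the third-moment bound alone, as above, rather than through $\log\varphi$, is exactly what makes the range sufficient for the choice of $T$.
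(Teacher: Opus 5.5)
Your proof is correct and is the standard Fourier-analytic proof: Esseen's smoothing inequality with $T=c\sqrt n/\beta$, together with the characteristic-function bound on $|t|\le T$, obtained by the direct telescoping argument available in the i.i.d.\ case. This is the method of the reference the paper cites instead of giving a proof. The restriction $n\ge4$ in your last step is harmless: since $\beta\ge1$, the nontrivial regime $\beta/\sqrt n\le c_0$ with $c_0\le 1/2$ already forces $n\ge c_0^{-2}\ge 4$.
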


\section{The range \texorpdfstring{$1<p\le2$}{1<p<=2}}

Fix $1 < p \le 2$ and put $q:=p/(p-1)\ge2$.  By decreasing $\delta$ if necessary and adjusting
$c_\delta$, we may assume from now on that
$[a-3\delta,a+3\delta]\subset(0,1)$.  Indeed, after replacing $\delta$ by
a smaller positive number $\delta'$, condition~\eqref{eq:away-lower} remains
valid with
\[
 c_{\delta'}:=\min\{c_\delta,c_-(\delta')^{\alpha_-},
 c_+(\delta')^{\alpha_+}\},
\]
in view of \eqref{eq:power}.

\subsection{Weighted one-dimensional estimate}

\begin{lem}\label{lem:weighted}
Let $d\nu:=w(x)\,dx$ be a probability measure on $[0,1]$ with $w>0$ $\cL^1$-a.e., and put
\[
 J(w):=\int_0^1w(x)^{-(p-1)}\,dx.
\]
Then every absolutely continuous $g:[0,1]\to\R$ satisfies
\[
 \Var_\nu(g)
 \le J(w)^{2/p}
 \left(\int_0^1|g'(x)|^q\,d\nu(x)\right)^{2/q},
\]
where $\Var_\nu(g)$ is the variance of $g$ with respect to $\nu$.
Consequently every $d_p$-$1$-Lipschitz $F:[0,1]^n\to\R$ satisfies
\[
 \Var_{\nu^{\otimes n}}(F)
 \le J(w)^{2/p}n^{1-2/q}.
\]
\end{lem}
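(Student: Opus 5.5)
The plan is to prove the one-dimensional inequality by a pointwise oscillation bound using H\"older's inequality. The product statement should then follow by tensorizing through the Efron--Stein inequality, combined with a power-mean estimate and the duality between $\ell^p$ and $\ell^q$.

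\emph{One-dimensional step.} For $x,y\in[0,1]$ I would write $|g(x)-g(y)|\le\int_0^1|g'|\,dt=\int_0^1 |g'|w^{1/q}\cdot w^{-1/q}\,dt$. H\"older's inequality with exponents $q$ and $p$ gives
\[
 |g(x)-g(y)|\le\left(\int_0^1|g'|^q w\,dt\right)^{1/q}\left(\int_0^1 w^{-p/q}\,dt\right)^{1/p}.
\]
Since $p/q=p-1$, the second factor is $J(w)^{1/p}$. This step needs $w>0$ a.e.\ so that $w^{-1/q}$ makes sense; if $J(w)=\infty$ there is nothing to prove. So the oscillation of $g$ is at most $J(w)^{1/p}(\int|g'|^q\,d\nu)^{1/q}$. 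Fixing any $y_0\in[0,1]$, we get $\Var_\nu(g)\le\int(g-g(y_0))^2\,d\nu$, which is bounded by the square of the oscillation. This is exactly the claimed bound.

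\emph{Tensorization.} Let $F$ be $d_p$-$1$-Lipschitz. By the Efron--Stein inequality, $\Var_{\nu^{\otimes n}}(F)\le\sum_{i=1}^n\mathbb E[\Var_i F]$, where $\Var_i$ denotes the variance in the $i$-th coordinate with the others frozen. For fixed other coordinates, $t\mapsto F(\dots,t,\dots)$ is $1$-Lipschitz, hence absolutely continuous. Applying the one-dimensional step gives $\Var_iF\le J(w)^{2/p}(\int|\partial_iF|^q\,d\nu(x_i))^{2/q}$. Since $2/q\le1$, Jensen's inequality for the concave map $s\mapsto s^{2/q}$ yields $\mathbb E[(\int|\partial_iF|^q d\nu(x_i))^{2/q}]\le(\mathbb E|\partial_iF|^q)^{2/q}$. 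H\"older's inequality in the index $i$ then gives
\[
 \sum_i a_i^{2/q}\le n^{1-2/q}\Bigl(\sum_i a_i\Bigr)^{2/q},
 \qquad a_i:=\mathbb E|\partial_iF|^q .
\]
Finally, $\sum_i|\partial_iF|^q\le1$ almost everywhere, so $\sum_i a_i\le1$, and the bound $J(w)^{2/p}n^{1-2/q}$ follows.

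\emph{Main obstacle.} The only delicate point is justifying $\sum_i|\partial_iF|^q\le1$ almost everywhere with respect to $\nu^{\otimes n}$, together with the identification of the partial derivatives along lines with those of the full gradient. I would use Rademacher's theorem: $F$ is differentiable $\cL^n$-a.e., and there its differential has dual norm at most $1$ with respect to $\ell^p$. The dual norm is the $\ell^q$-norm of $\nabla F$, which gives the inequality at every point of differentiability. Moreover, $\nu^{\otimes n}\ll\cL^n$ because $\nu\ll\cL^1$. By Fubini, for a.e.\ choice of the frozen coordinates, the derivative of the one-variable restriction agrees $\cL^1$-a.e.\ with $\partial_iF$. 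Hence all the a.e.\ statements above transfer to $\nu^{\otimes n}$, which completes the argument.
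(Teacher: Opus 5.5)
Your proposal is correct and follows the paper's proof essentially step for step: the H\"older oscillation bound with $p/q=p-1$, then Efron--Stein, Jensen with $2/q\le1$, H\"older over the index $i$, and Rademacher plus Fubini to get $\sum_i|\partial_iF|^q\le1$ and to identify the slice derivatives with $\partial_iF$. The only cosmetic difference is that the paper first extends $F$ to $\R^n$ by McShane before invoking Rademacher, while you apply Rademacher directly on the interior of $[0,1]^n$; this is equally valid, since the boundary is $\cL^n$-null and $\nu^{\otimes n}\ll\cL^n$.
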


\begin{proof}
Let $0\le x<y\le1$. Since $g$ is absolutely continuous, H\"older's inequality on $[x,y]$, with exponents $q$ and $p$, gives
\[
 \begin{aligned}
 |g(y)-g(x)|
 &\le\int_x^y|g'(t)|\,dt\\
 &\le
 \left(\int_x^y|g'(t)|^qw(t)\,dt\right)^{1/q}
 \left(\int_x^yw(t)^{-p/q}\,dt\right)^{1/p}.
 \end{aligned}
\]
Since $p/q=p-1$,
\[
 \int_x^yw(t)^{-p/q}\,dt
 \le\int_0^1w(t)^{-(p-1)}\,dt
 =J(w),
\]
and $\int_x^y|g'(t)|^qw(t)\,dt \le\int_0^1|g'(t)|^q\,d\nu(t)$. Therefore $|g(y)-g(x)| \le J(w)^{1/p} \left(\int_0^1|g'|^q\,d\nu\right)^{1/q}$,
which yields
\[
 \operatorname{osc}_{[0,1]}g
 := \sup_{x,y \in [0,1]} |g(y) - g(x)|
 \le J(w)^{1/p}
 \left(\int_0^1|g'|^q\,d\nu\right)^{1/q}.
\]
For every real-valued random variable $Z$ whose essential range is contained
in an interval of length $L$, one has $\Var(Z)\le L^2$; indeed, $\Var(Z)=\inf_{c\in\R}\mathbb E|Z-c|^2 \le\mathbb E|Z-t_0|^2\le L^2$ for any $t_0$ belonging to that interval. Applying this with $Z=g$ gives $\Var_\nu(g) \le J(w)^{2/p} \left(\int_0^1|g'|^q\,d\nu\right)^{2/q}$. This proves the one-dimensional assertion.

We now prove the product estimate. By the McShane extension theorem, $F$ extends
from $[0,1]^n$ to a $1$-Lipschitz function on $\R^n$ for the $\ell^p$ norm;
we denote the extension again by $F$. By Rademacher's theorem \cite{EvansGariepy}*{Section~3.1.2},
$\|\nabla F\|_q\le1$ at $\cL^n$-a.e.\ point.

Let $X_1,\ldots,X_n$ be independent with law $\nu$. The Efron--Stein
inequality \cite{EfronStein}*{Section~2} gives
\[
 \Var_{\nu^{\otimes n}}(F)
 \le \sum_{i=1}^n
 \int_{[0,1]^{n-1}}\Var_\nu
 \bigl(F(x_1,\ldots,x_{i-1},\cdot,x_{i+1},\ldots,x_n)\bigr)
 \,d\nu^{\otimes(n-1)}.
\]
For every fixed choice of the other coordinates, the corresponding
one-dimensional slice of $F$ is $1$-Lipschitz and hence absolutely
continuous. Let $N$ be the $\cL^n$-null set at whose points $F$ is not
differentiable. By Fubini's theorem, for $\cL^{n-1}$-a.e.\ choice of the
other coordinates, the corresponding section of $N$ has $\cL^1$-measure
zero. Since $\nu^{\otimes(n-1)}\ll\cL^{n-1}$ and $\nu\ll\cL^1$, for
$\nu^{\otimes(n-1)}$-a.e.\ choice of the other coordinates the derivative
of the slice agrees with $\partial_iF$ for $\nu$-a.e.\ value of the
$i$-th coordinate. The one-dimensional estimate therefore gives
\[
 \Var_\nu
 \bigl(F(x_1,\ldots,x_{i-1},\cdot,x_{i+1},\ldots,x_n)\bigr)
 \le J(w)^{2/p}
 \left(\int_0^1|\partial_iF|^q\,d\nu\right)^{2/q}.
\]
Since $2/q\le1$, Jensen's inequality therefore gives
\[
 \Var_{\nu^{\otimes n}}(F)
 \le J(w)^{2/p}
 \sum_{i=1}^n
 \left(\int_{[0,1]^n}|\partial_iF|^q\,d\nu^{\otimes n}\right)^{2/q}.
\]
Put $a_i:=\int_{[0,1]^n}|\partial_iF|^q\,d\nu^{\otimes n}$. Since
$q/2\ge1$, H\"older's inequality for the finite sum gives
\[
 \sum_{i=1}^na_i^{2/q}
 \le n^{1-2/q}\left(\sum_{i=1}^na_i\right)^{2/q}.
\]
Finally, $\nu^{\otimes n}\ll\cL^n$ and $\|\nabla F\|_q\le1$
$\cL^n$-a.e., so
\[
 \sum_{i=1}^na_i
 =\int_{[0,1]^n}\sum_{i=1}^n|\partial_iF|^q\,d\nu^{\otimes n}
 \le1.
\]
Combining these estimates proves
$\Var_{\nu^{\otimes n}}(F)\le J(w)^{2/p}n^{1-2/q}$.
\end{proof}

\subsection{Wasserstein comparison}

\begin{lem}\label{lem:Wp-comp}
Let $(X,d)$ be a metric space, let $\nu,\nu'$ be Borel probability measures, let $1\le p<\infty$, and let $0<\eta<\kappa<1$. Then
\[
 \ObsDiam(X,d,\nu;-\kappa)
 \le \ObsDiam(X,d,\nu';-(\kappa-\eta))+2\eta^{-1/p}W_p(\nu,\nu').
\]
\end{lem}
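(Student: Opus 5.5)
We fix a $1$-Lipschitz $f:X\to\R$ and a coupling $\pi\in\Pi(\nu,\nu')$. It suffices to prove
\[
 \PartDiam(f_*\nu;1-\kappa)\le\ObsDiam(X,d,\nu';-(\kappa-\eta))+2\eta^{-1/p}W
\]
for every $W$ with $W^p>\int d(x,y)^p\,d\pi$ and every such $\pi$. Indeed, taking the infimum over $W$ and $\pi$, then the supremum over $f$, gives the claim. If $W_p(\nu,\nu')=\infty$ there is nothing to prove, so we may assume it is finite. The strategy is to transfer a good set for $f_*\nu'$ to one for $f_*\nu$ by thickening it by an amount controlled via Markov's inequality on the coupling.

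First, we put $D:=\ObsDiam(X,d,\nu';-(\kappa-\eta))$ and take any $\varepsilon>0$. By definition $\PartDiam(f_*\nu';1-(\kappa-\eta))\le D$, so there is a Borel $B\subset\R$ with $f_*\nu'(B)\ge1-\kappa+\eta$ and $\diam B\le D+\varepsilon$. Replacing $B$ by its closed convex hull only enlarges it without changing the diameter, so we may take $B$ to be a closed interval $[s,t]$ with $t-s\le D+\varepsilon$.

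Second, we use Markov's inequality. Let $E:=\{(x,y):d(x,y)>\eta^{-1/p}W\}$. Then
\[
 \pi(E)\le \frac{\eta}{W^p}\int d^p\,d\pi<\eta .
\]
Next put $B':=[s-\eta^{-1/p}W,\ t+\eta^{-1/p}W]$. If $f(y)\in B$ and $(x,y)\notin E$, then $|f(x)-f(y)|\le d(x,y)\le\eta^{-1/p}W$, so $f(x)\in B'$. Hence
\[
 f_*\nu(B')=\pi(\{(x,y):f(x)\in B'\})\ge\pi(\{f(y)\in B\})-\pi(E)>1-\kappa+\eta-\eta=1-\kappa .
\]

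Third, we read off the bound. Since $\diam B'\le D+\varepsilon+2\eta^{-1/p}W$, we obtain $\PartDiam(f_*\nu;1-\kappa)\le D+\varepsilon+2\eta^{-1/p}W$. Letting $\varepsilon\to0$ and then taking the infimum over $W$ and $\pi$ yields the displayed inequality with $W_p(\nu,\nu')$.

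The only delicate points are measurability and strictness. The event $\{f(x)\in B'\}$ is Borel since $f$ is continuous. The strict versus non-strict inequality in Markov's bound is handled by choosing $W$ strictly larger than the cost, which is why we do not fix an optimal coupling; optimal couplings need not exist in a general metric space.
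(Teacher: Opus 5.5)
Your proposal is correct and follows essentially the same route as the paper: Markov's inequality applied to a near-optimal coupling (the paper takes cost at most $W_p(\nu,\nu')^p+\varepsilon$, you take $W^p$ strictly above the cost), replacing the good set for $f_*\nu'$ by its interval hull, and thickening that interval by $\eta^{-1/p}$ times the transport scale. The strict Markov inequality you arrange is harmless but not needed, since the partial diameter only requires mass at least $1-\kappa$, so $\pi(E)\le\eta$ already suffices.
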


\begin{proof}
If $W_p(\nu,\nu')=+\infty$, there is nothing to prove. Set
$D:=W_p(\nu,\nu')<\infty$. Fix $\varepsilon>0$ and choose
$\pi\in\Pi(\nu,\nu')$ such that
$\int_{X\times X}d(x,y)^p\,d\pi(x,y)\le D^p+\varepsilon$. Put
$r:=\eta^{-1/p}(D^p+\varepsilon)^{1/p}$. Then Markov's inequality gives
\[
 \pi\bigl(\{(x,y)\in X\times X\mid d(x,y)>r\}\bigr)\le\eta.
\]
Let $f:X\to\R$ be $1$-Lipschitz and let $\varepsilon'>0$. By the definition
of partial diameter, there is a nonempty Borel set $B\subset\R$ such that
$(f_*\nu')(B)\ge1-\kappa+\eta$ and
\[
 \diam B\le \ObsDiam(X,d,\nu';-(\kappa-\eta))+\varepsilon'.
\]
Since $B$ has finite diameter, $I:=[\inf B,\sup B]$ satisfies
$(f_*\nu')(I)\ge1-\kappa+\eta$ and $\diam I=\diam B$. Let
$I^{[r]}:=\{t\in\R\mid \operatorname{dist}(t,I)\le r\}$, where
$\operatorname{dist}(t,I):=\inf_{s\in I}|t-s|$.
If $f(y)\in I$ and
$d(x,y)\le r$, then $f(x)\in I^{[r]}$. Hence, using the two marginals of
$\pi$ and the preceding Markov estimate,
\[
 \begin{aligned}
 (f_*\nu)(I^{[r]})
 &\ge \pi\bigl(\{f(y)\in I,\ d(x,y)\le r\}\bigr)\\
 &\ge (f_*\nu')(I)-\pi\bigl(\{d(x,y)>r\}\bigr)\ge1-\kappa.
 \end{aligned}
\]
Moreover $\diam I^{[r]}=\diam I+2r$. Therefore
\[
 \PartDiam(f_*\nu;1-\kappa)
 \le\ObsDiam(X,d,\nu';-(\kappa-\eta))+\varepsilon'+2r.
\]
Taking the supremum over all $1$-Lipschitz $f$ and then letting
$\varepsilon,\varepsilon'\downarrow0$ proves the assertion.
\end{proof}

\subsection{Two-sided regularization}

\begin{defn}[Two-sided regularization] \label{defn:two-reg}
For $0<r_-,r_+<\delta$, put $\mathbf r:=(r_-,r_+)$ and define
\[
 c_{r_-,-}:=\frac1{r_-}\int_0^{r_-}\rho(a-s)\,ds,
 \qquad
 c_{r_+,+}:=\frac1{r_+}\int_0^{r_+}\rho(a+s)\,ds,
\]
and
\[
 \rho_{\mathbf r}(x):=\begin{cases}
 \rho(x),&x\in[0,a-r_-)\cup(a+r_+,1],\\
 c_{r_-,-},&x\in[a-r_-,a],\\
 c_{r_+,+},&x\in(a,a+r_+].
 \end{cases}
\]
We call $\rho_{\mathbf r}$ the \emph{two-sided regularization} of $\rho$ at
the scales $r_-$ and $r_+$.
The two flattened pieces preserve the corresponding side masses:
\[
 \int_{a-r_-}^a\rho_{\mathbf r}\,d\cL^1=\int_{a-r_-}^a\rho\,d\cL^1,
 \qquad
 \int_a^{a+r_+}\rho_{\mathbf r}\,d\cL^1=\int_a^{a+r_+}\rho\,d\cL^1.
\]
Thus $d\mu_{\mathbf r}:=\rho_{\mathbf r}\,d\cL^1$ is a Borel probability
measure. Moreover,
$c_{r_-,-}\asymp r_-^{\alpha_-}$ and
$c_{r_+,+}\asymp r_+^{\alpha_+}$, with constants independent of
$r_-,r_+$. Hence
\begin{equation}\label{eq:rhor}
 \begin{aligned}
 \rho_{\mathbf r}(a-s)&\asymp
 \begin{cases}
 r_-^{\alpha_-},&0\le s\le r_-,\\
 s^{\alpha_-},&r_-<s<\delta,
 \end{cases}\\
 \rho_{\mathbf r}(a+s)&\asymp
 \begin{cases}
 r_+^{\alpha_+},&0\le s\le r_+,\\
 s^{\alpha_+},&r_+<s<\delta.
 \end{cases}
 \end{aligned}
\end{equation}
where the comparisons on the second branches are understood for
$\cL^1$-a.e.\ $s$.
\end{defn}

\begin{nota}
For $\alpha>0$ and $0<r<\delta$, put
\[
 \mathcal J_\alpha(r):=
 \begin{cases}
 1,&\alpha(p-1)<1,\\
 \log(1/r),&\alpha(p-1)=1,\\
 r^{1-\alpha(p-1)},&\alpha(p-1)>1.
 \end{cases}
\]
\end{nota}

\begin{lem}\label{lem:Jr}
For $1<p\le2$,
\[
 J(\rho_{\mathbf r})\asymp
 \mathcal J_{\alpha_-}(r_-)+\mathcal J_{\alpha_+}(r_+).
\]
For every $1\le p<\infty$, there exists $C_p>0$ such that
\begin{equation}\label{eq:Wp-reg}
 W_p(\mu,\mu_{\mathbf r})^p
 \le C_p\left(r_-^{\alpha_-+p+1}+r_+^{\alpha_++p+1}\right),
\end{equation}
and
\[
 W_p(\mu^{\otimes n},\mu_{\mathbf r}^{\otimes n})
 \le C_pn^{1/p}
 \left(r_-^{\alpha_-+p+1}+r_+^{\alpha_++p+1}\right)^{1/p}.
\]
\end{lem}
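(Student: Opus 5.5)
For the estimate of $J(\rho_{\mathbf r})$, I will split $[0,1]$ into three pieces: the region $|x-a|\ge\delta$, the two flattened intervals $[a-r_-,a]$ and $(a,a+r_+]$, and the two annuli $r_\pm<|x-a|<\delta$. On the first region $\rho_{\mathbf r}=\rho\ge c_\delta$ by \eqref{eq:away-lower}, and $\rho\le C$ on $\delta\le|x-a|<2\delta$ by \eqref{eq:power}. Away from the zero this gives a contribution that is bounded above and below by constants.

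Here I note a subtlety. The upper bound only needs $\rho\ge c_\delta$. For the lower bound I need $J\gtrsim 1$, which follows from Jensen, since $\int\rho^{-(p-1)}\ge(\int\rho)^{-(p-1)}=1$ on the relevant region, or more simply from the annulus contribution below.

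By \eqref{eq:rhor}, the left flattened interval contributes $\asymp r_-\cdot r_-^{-\alpha_-(p-1)}=r_-^{1-\alpha_-(p-1)}$. The left annulus contributes
\[
 \int_{r_-}^{\delta}s^{-\alpha_-(p-1)}\,ds,
\]
which is $\asymp1$, $\asymp\log(\delta/r_-)$, or $\asymp r_-^{1-\alpha_-(p-1)}$ in the three cases $\alpha_-(p-1)<1$, $=1$, $>1$. In the first case the flattened piece is $\le r_-^{0}\cdot C$, so it is dominated. In the critical case it is $\asymp r_-^0=1$, which is dominated by $\log(1/r_-)$ once $r_-$ is small. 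This smallness is fine, since $\log(\delta/r_-)\asymp\log(1/r_-)$ for $r_-<\delta/2$, say; for $r_-$ near $\delta$ the claim holds by adjusting constants, because everything is then bounded above and below. Summing the two sides, and using the fact that the constant part is absorbed because each $\mathcal J\gtrsim1$, gives $J(\rho_{\mathbf r})\asymp\mathcal J_{\alpha_-}(r_-)+\mathcal J_{\alpha_+}(r_+)$.

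For \eqref{eq:Wp-reg}, I will build an explicit coupling. On the complement of $[a-r_-,a+r_+]$ it is the identity. On $[a-r_-,a]$ it is any coupling of the restrictions $\mu|_{[a-r_-,a]}$ and $\mu_{\mathbf r}|_{[a-r_-,a]}$, which have equal mass, and similarly on the right. All transported points move a distance at most $r_-$ (respectively $r_+$). The mass moved on the left is $\int_0^{r_-}\rho(a-s)\,ds\lesssim r_-^{\alpha_-+1}$. So the cost is $\le r_-^{p}\cdot C r_-^{\alpha_-+1}$, plus the analogous term on the right, which gives the bound. The glued measure is a genuine coupling because the pieces partition the mass.

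For the product, I will take the product of the one-dimensional couplings $\pi^{\otimes n}$. In the $d_p$ metric the cost is additive: $\int d_p^p\,d\pi^{\otimes n}=\sum_i\int|x_i-y_i|^p\,d\pi=n\,W$-cost. Hence $W_p(\mu^{\otimes n},\mu_{\mathbf r}^{\otimes n})^p\le nC_p(r_-^{\alpha_-+p+1}+r_+^{\alpha_++p+1})$. Taking $p$-th roots and enlarging $C_p$ to $\max(C_p,C_p^{1/p})$ gives the stated form.

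The only mildly delicate step is bookkeeping in the critical case: making sure the flattened-interval contribution and the constant far-region contribution are dominated by $\log(1/r)$ uniformly. I handle this by restricting to $r_\pm\le\delta/2$ and treating larger $r$ by compactness of the range of constants. Everything else is routine.
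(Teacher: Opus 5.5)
Your proposal is correct and follows the paper's proof essentially step for step. It uses the same split into the far region, the flattened intervals and the annuli via \eqref{eq:rhor}, and the same lower bound $J(\rho_{\mathbf r})\ge1$ from Jensen, which should be applied on all of $[0,1]$, where $\int_0^1\rho_{\mathbf r}=1$. It also uses the same coupling, side by side with the identity elsewhere, tensorized to $\pi_{\mathbf r}^{\otimes n}$. Your restriction to $r_\pm\le\delta/2$ is unnecessary: the flattened and annulus pieces are each comparable to nonnegative quantities whose sum is $\asymp1+\log(\delta/r_\pm)\asymp\log(1/r_\pm)$ for every $r_\pm<\delta$.
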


\begin{proof}
We first estimate $J(\rho_{\mathbf r})$. Put
$\gamma_\pm:=\alpha_\pm(p-1)$ and
$K_\delta:=[0,1]\setminus(a-\delta,a+\delta)$. By
\eqref{eq:away-lower}, the contribution from $K_\delta$ is bounded above
independently of $\mathbf r$. Since $\int_0^1\rho_{\mathbf r}(x)\,dx=1$,
Jensen's inequality also gives
$J(\rho_{\mathbf r})\ge1$.

Consider the left side. By \eqref{eq:rhor},
\[
 \int_{a-r_-}^a\rho_{\mathbf r}(x)^{-(p-1)}\,dx
 \asymp r_-^{1-\gamma_-},
 \qquad
 \int_{a-\delta}^{a-r_-}\rho_{\mathbf r}(x)^{-(p-1)}\,dx
 \asymp\int_{r_-}^{\delta}s^{-\gamma_-}\,ds.
\]
The last integral equals
\[
 \begin{cases}
 \dfrac{\delta^{1-\gamma_-}-r_-^{1-\gamma_-}}{1-\gamma_-},&\gamma_-<1,\\[2mm]
 \log(\delta/r_-),&\gamma_-=1,\\[2mm]
 \dfrac{r_-^{1-\gamma_-}-\delta^{1-\gamma_-}}{\gamma_--1},&\gamma_->1.
 \end{cases}
\]
Thus, if $\gamma_-<1$, both integrals above are $O(1)$, and hence so
is the left contribution. If $\gamma_-=1$, the left contribution is
comparable to $1+\log\frac{\delta}{r_-}\asymp\log\frac1{r_-}$.
Finally, if $\gamma_->1$, the left contribution is comparable to
$r_-^{1-\gamma_-}$. The same argument on the right gives
the corresponding alternatives for $\alpha_+,r_+$. Combining the two
side estimates with $J(\rho_{\mathbf r})\ge1$ proves the first assertion.

We now prove the Wasserstein estimate. By construction, $\mu$ and
$\mu_{\mathbf r}$ have the same total mass on each of $[a-r_-,a]$ and
$[a,a+r_+]$, and they agree outside these intervals. Couple the two
restrictions on each side and use the diagonal coupling on the complement.
Under the resulting coupling $\pi_{\mathbf r}$, every point moved on the
left travels at most $r_-$ and every point moved on the right travels at most
$r_+$. Hence
\[
 \int_{[0,1]^2}|x-y|^p\,d\pi_{\mathbf r}
 \le r_-^p\mu([a-r_-,a])+r_+^p\mu([a,a+r_+]).
\]
By \eqref{eq:power},
$\mu([a-r_-,a])\le C r_-^{\alpha_-+1}$ and
$\mu([a,a+r_+])\le C r_+^{\alpha_++1}$. This gives
\eqref{eq:Wp-reg}. The product measure
$\pi_{\mathbf r}^{\otimes n}$ is a coupling of $\mu^{\otimes n}$ and
$\mu_{\mathbf r}^{\otimes n}$. Since
$d_p(x,y)^p=\sum_{i=1}^n|x_i-y_i|^p$, Fubini's theorem gives
\begin{align*}
 W_p(\mu^{\otimes n},\mu_{\mathbf r}^{\otimes n})^p
 &\le \int_{[0,1]^{2n}}d_p(x,y)^p\,d\pi_{\mathbf r}^{\otimes n}(x,y)\\
 &=n\int_{[0,1]^2}|s-t|^p\,d\pi_{\mathbf r}(s,t)\\
 &\le Cn\left(r_-^{\alpha_-+p+1}+r_+^{\alpha_++p+1}\right).
\end{align*}
Taking the $p$-th root proves the second estimate.
\end{proof}

\subsection{Upper and baseline lower bounds}

\begin{prop}\label{prop:upper-le2}
For $1<p\le2$ and every fixed $0<\kappa<1$, as $n\to\infty$,
\[
 \ObsDiam(X_{n,p};-\kappa)
 \lesssim
 \begin{cases}
 n^{1/p-1/2},&\alpha_*<1/(p-1),\\
 n^{1/p-1/2}(\log n)^{1/p},&\alpha_*=1/(p-1),\\
 n^{\frac1{2p}-\frac1{2(\alpha_*+1)}},&\alpha_*>1/(p-1).
 \end{cases}
\]
\end{prop}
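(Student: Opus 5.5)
The plan is to combine three earlier results: the weighted variance bound (Lemma~\ref{lem:weighted}) applied to the regularized measure $\mu_{\mathbf r}$, the Wasserstein comparison (Lemma~\ref{lem:Wp-comp}), and the estimates of Lemma~\ref{lem:Jr}. We then optimize over the two scales $r_-,r_+$.

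First, for a fixed pair $\mathbf r$, we bound the observable diameter of $([0,1]^n,d_p,\mu_{\mathbf r}^{\otimes n})$. Let $F$ be $d_p$-$1$-Lipschitz. Lemma~\ref{lem:weighted} applies with $w=\rho_{\mathbf r}$, which is positive a.e.\ by \eqref{eq:away-lower} and \eqref{eq:rhor}. It gives $\Var(F)\le J(\rho_{\mathbf r})^{2/p}n^{1-2/q}=J(\rho_{\mathbf r})^{2/p}n^{2/p-1}$. Chebyshev's inequality shows that the interval centered at the mean of radius $(\Var F/\kappa')^{1/2}$ carries mass at least $1-\kappa'$. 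Hence
\[
\ObsDiam(\mu_{\mathbf r}^{\otimes n};-\kappa')\le 2\kappa'^{-1/2}J(\rho_{\mathbf r})^{1/p}n^{1/p-1/2}.
\]
Now apply Lemma~\ref{lem:Wp-comp} with $\eta=\kappa/2$ and $\kappa'=\kappa/2$, together with the product Wasserstein bound of Lemma~\ref{lem:Jr}. This yields
\[
\ObsDiam(X_{n,p};-\kappa)\lesssim \bigl(\mathcal J_{\alpha_-}(r_-)+\mathcal J_{\alpha_+}(r_+)\bigr)^{1/p}n^{1/p-1/2}
+n^{1/p}\bigl(r_-^{\alpha_-+p+1}+r_+^{\alpha_++p+1}\bigr)^{1/p}.
\]

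Second, we choose the scales side by side, treating each side $\pm$ independently. The cases are governed by $\gamma_\pm=\alpha_\pm(p-1)$.
\begin{itemize}
\item If $\gamma_\pm<1$, take $r_\pm=n^{-1/(\alpha_\pm+1)}$, or any scale making $n r_\pm^{\alpha_\pm+p+1}\lesssim n^{1-p/2}$. The $J$-term is then $O(1)$.
\item If $\gamma_\pm=1$, take $r_\pm=n^{-1/(\alpha_\pm+1)}$. The $J$-term is $\log(1/r_\pm)\asymp\log n$. The transport term is $n\cdot n^{-(\alpha_\pm+p+1)/(\alpha_\pm+1)}=n^{-p/(\alpha_\pm+1)}$, whose $1/p$-th power is $n^{-1/(\alpha_\pm+1)}$. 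Since $1/(\alpha_\pm+1)=(p-1)/p$, this equals $n^{1/p-1}$, which is smaller than $n^{1/p-1/2}$.
\item If $\gamma_\pm>1$, balance $r^{(1-\gamma)/p}n^{1/p-1/2}$ against $n^{1/p}r^{(\alpha+p+1)/p}$. This amounts to solving $r^{1-\gamma}n^{1-p/2}=n r^{\alpha+p+1}$, that is, $r^{\alpha p+2}=n^{-p/2}$. The balanced value is
\[
n^{1/p}r^{(\alpha+p+1)/p}=n^{\frac1p-\frac{\alpha+p+1}{2(\alpha p+2)}}.
\]
\end{itemize}

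The main obstacle is checking that this supercritical exponent equals $\frac1{2p}-\frac1{2(\alpha+1)}$. A direct check at $p=2$ gives $\frac12-\frac{\alpha+3}{4(\alpha+1)}=\frac{\alpha-1}{4(\alpha+1)}$, which matches. For general $p$ the two expressions disagree, so the naive bound $\mu([a-r,a])r^p$ in \eqref{eq:Wp-reg} may need replacing. The fallback is a sharper transport estimate, or simply a different scale such as $r=n^{-1/(2(\alpha+1))}$.

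I would try the following fallback first. Take $r_\pm= n^{-1/(2(\alpha_\pm+1))}$. The transport term is then $n^{1/p}n^{-(\alpha+p+1)/(2p(\alpha+1))}=n^{\frac1{2p}-\frac1{2(\alpha+1)}}$, exactly the target. The $J$-term becomes
\[
n^{\frac{\gamma-1}{2p(\alpha+1)}}n^{\frac1p-\frac12}.
\]
Its exponent minus the target is
\[
\frac{\alpha(p-1)-1}{2p(\alpha+1)}+\frac{1}{2p}-\frac12+\frac{1}{2(\alpha+1)}.
\]
Over the common denominator $2p(\alpha+1)$ the numerator is $\alpha p-\alpha-1+\alpha+1-p\alpha-p+p=0$. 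So both terms are of the target order, and this choice works.

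The final bound is the maximum of the two side contributions. The side with the larger exponent dominates: the function $\alpha\mapsto\frac1{2p}-\frac1{2(\alpha+1)}$ is increasing, and subcritical or critical sides contribute at most $n^{1/p-1/2}(\log n)^{1/p}$. This lower-order term is absorbed because for $\gamma>1$ we have $\frac1{2p}-\frac1{2(\alpha+1)}>\frac1{2p}-\frac{p-1}{2p}=\frac1p-\frac12$. Finally, if both sides are subcritical, then $J$ is bounded and the bound is $n^{1/p-1/2}$.
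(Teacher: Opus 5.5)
Your proposal is correct and follows essentially the paper's route: the weighted variance bound of Lemma~\ref{lem:weighted} for $\mu_{\mathbf r}^{\otimes n}$, Chebyshev, Lemma~\ref{lem:Wp-comp} with Lemma~\ref{lem:Jr}, and $r_\pm=n^{-1/(2(\alpha_\pm+1))}$ on supercritical sides; the paper differs only in treating the fully subcritical case directly via $J(\rho)<\infty$ and in using $n^{-1/(2(\alpha_\pm+1))}$ on every side when $\alpha_*\ge1/(p-1)$. The ``disagreement for general $p$'' you found when balancing is an algebra slip, not a real obstacle: since $\gamma+\alpha+p=p(\alpha+1)$, the balancing equation is $r^{p(\alpha+1)}=n^{-p/2}$ rather than $r^{\alpha p+2}=n^{-p/2}$, so your ``fallback'' scale is exactly the balanced one and no sharper transport estimate is needed.
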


See Notation~\ref{nota} for $\lesssim$.

\begin{proof}
We treat the three cases separately.

Assume first that $\alpha_*<1/(p-1)$. Then
$\gamma_\pm:=\alpha_\pm(p-1)<1$. By \eqref{eq:power},
\[
 \int_{a-\delta}^a\rho(x)^{-(p-1)}\,dx
 \le c_-^{-(p-1)}\int_0^\delta s^{-\gamma_-}\,ds<\infty,
\]
and similarly
$\int_a^{a+\delta}\rho(x)^{-(p-1)}\,dx<\infty$. On
$K_\delta:=[0,a-\delta]\cup[a+\delta,1]$, \eqref{eq:away-lower}
gives a finite contribution. Thus $J(\rho)<\infty$.

Let $F:[0,1]^n\to\R$ be $d_p$-$1$-Lipschitz and put
\[
 m_F:=\int_{[0,1]^n} F\,d\mu^{\otimes n}.
\]
Lemma~\ref{lem:weighted} gives
\[
 \Var_{\mu^{\otimes n}}(F)\le J(\rho)^{2/p}n^{1-2/q}.
\]
Hence
Chebyshev's inequality yields
\[
 \mu^{\otimes n}(|F-m_F|>t)
 \le \frac{J(\rho)^{2/p}n^{1-2/q}}{t^2}.
\]
Choosing
\[
 t:=\kappa^{-1/2}J(\rho)^{1/p}n^{1/2-1/q},
\]
and using
$1/2-1/q=1/p-1/2$, we obtain an interval of mass at least $1-\kappa$ and
diameter at most $C_{\rho,p,\kappa}n^{1/p-1/2}$. Taking the supremum over
$F$ proves the subcritical upper bound.

Assume now $\alpha_*\ge1/(p-1)$. Fix $\eta\in(0,\kappa)$ and, for all
sufficiently large $n$, put
\[
 r_{\pm,n}:=n^{-1/(2(\alpha_\pm+1))},
 \qquad \mathbf r_n:=(r_{-,n},r_{+,n}).
\]
Let $F$ be $d_p$-$1$-Lipschitz and put
$m_{F,\mathbf r_n}:=\int_{[0,1]^n} F\,d\mu_{\mathbf r_n}^{\otimes n}$.
Lemma~\ref{lem:weighted} gives
\[
 \Var_{\mu_{\mathbf r_n}^{\otimes n}}(F)
 \le J(\rho_{\mathbf r_n})^{2/p}n^{1-2/q}.
\]
Hence, for every $t>0$, Chebyshev's inequality yields
\[
 \mu_{\mathbf r_n}^{\otimes n}
 \bigl(|F-m_{F,\mathbf r_n}|>t\bigr)
 \le \frac{J(\rho_{\mathbf r_n})^{2/p}n^{1-2/q}}{t^2}.
\]
Taking
\[
 t:=(\kappa-\eta)^{-1/2}
 J(\rho_{\mathbf r_n})^{1/p}n^{1/2-1/q}
\]
produces an interval centered at $m_{F,\mathbf r_n}$ of
$\mu_{\mathbf r_n}^{\otimes n}$-mass at least $1-(\kappa-\eta)$ and
diameter $2t$. Since $1/2-1/q=1/p-1/2$, taking the supremum over all
$d_p$-$1$-Lipschitz $F$ gives
\[
 \ObsDiam([0,1]^n,d_p,\mu_{\mathbf r_n}^{\otimes n};-(\kappa-\eta))
 \le C J(\rho_{\mathbf r_n})^{1/p}n^{1/p-1/2}.
\]
Lemma~\ref{lem:Wp-comp} and Lemma~\ref{lem:Jr} therefore give
\begin{equation}\label{eq:upper-reg-le2}
 \ObsDiam(X_{n,p};-\kappa)
 \le C\left[
 n^{1/p-1/2}J(\rho_{\mathbf r_n})^{1/p}
 +n^{1/p}
 \left(r_{-,n}^{\alpha_-+p+1}+r_{+,n}^{\alpha_++p+1}\right)^{1/p}
 \right].
\end{equation}
For $\omega\in\{-,+\}$ put
\[
 \beta_\omega:=\frac1{2p}-\frac1{2(\alpha_\omega+1)}.
\]
Then a direct calculation gives
\begin{equation}\label{eq:side-Wp-scale}
 n^{1/p}r_{\omega,n}^{(\alpha_\omega+p+1)/p}=n^{\beta_\omega}.
\end{equation}
The function $\alpha\mapsto1/(2p)-1/(2(\alpha+1))$ is increasing, and at
$\alpha=1/(p-1)$ its value is $1/p-1/2$.

Suppose $\alpha_*=1/(p-1)$. At least one side is critical and neither is
supercritical. Lemma~\ref{lem:Jr} gives
$J(\rho_{\mathbf r_n})\le C\log n$, while
$\beta_\pm\le1/p-1/2$. Hence the first term in
\eqref{eq:upper-reg-le2} is at most
$Cn^{1/p-1/2}(\log n)^{1/p}$, and by
\eqref{eq:side-Wp-scale} the second is at most $Cn^{1/p-1/2}$. This proves
the critical upper bound.

Finally suppose $\alpha_*>1/(p-1)$ and put
$\beta_*:=1/(2p)-1/(2(\alpha_*+1))$. Then
$\beta_*>1/p-1/2$. A subcritical side contributes $O(1)$ to
$J(\rho_{\mathbf r_n})$, a critical side contributes $O(\log n)$, and a
supercritical side with exponent $\alpha_\omega$ contributes
\[
 \mathcal J_{\alpha_\omega}(r_{\omega,n})^{1/p}
 =r_{\omega,n}^{(1-\alpha_\omega(p-1))/p}.
\]
Multiplying this last quantity by $n^{1/p-1/2}$ gives exactly
$n^{\beta_\omega}$, because
\[
 \frac1p-\frac12-
 \frac{1-\alpha_\omega(p-1)}{2p(\alpha_\omega+1)}
 =\frac1{2p}-\frac1{2(\alpha_\omega+1)}.
\]
Since $\beta_\omega\le\beta_*$, and since
$n^{1/p-1/2}(\log n)^{1/p}=o(n^{\beta_*})$, we obtain
$n^{1/p-1/2}J(\rho_{\mathbf r_n})^{1/p}\le Cn^{\beta_*}$. Likewise,
using \eqref{eq:side-Wp-scale} and
$(u+v)^{1/p}\le u^{1/p}+v^{1/p}$, the Wasserstein term in
\eqref{eq:upper-reg-le2} is at most
$C(n^{\beta_-}+n^{\beta_+})\le Cn^{\beta_*}$. This proves the
supercritical upper bound.
\end{proof}

We use the following.

\begin{thm}[{\rm [16, Theorem~1.3]}]\label{thm:OzawaShioya}
Let $(X,d,\nu)$ be a nontrivial mm-space with finite diameter, and
write
\[
 X_1^n:=(X^n,d_1,\nu^{\otimes n}),
 \qquad
 d_1(x,y):=\sum_{i=1}^n d(x_i,y_i),
\]
where $x = (x_1,\dots,x_n)$, $y = (y_1,\dots,y_n) \in X^n$.
Put
\[
 V(X,d,\nu) := \sup_{x\in X}\Var_\nu(d(x,\cdot)),
\]
and let $\varphi_1(t):=\sqrt{2/\pi}\,e^{-t^2/2}$ for $t\ge0$ and
$\varphi_1(t):=0$ for $t<0$. Then, for every $0<\kappa<1$,
\[
 \liminf_{n\to\infty}
 \frac{\ObsDiam(X_1^n;-\kappa)}{\sqrt n}
 \ge \sqrt{V(X,d,\nu)}\,\PartDiam(\varphi_1(t)\,dt;1-\kappa).
\]
\end{thm}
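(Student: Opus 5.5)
The plan is to realize the Gaussian profile through one explicit observable and the central limit theorem. Fix $\varepsilon>0$ and choose $x_0\in X$ with $\Var_\nu(d(x_0,\cdot))\ge V(X,d,\nu)-\varepsilon=:V_\varepsilon$. First I check that $V(X,d,\nu)>0$, so that we may take $V_\varepsilon>0$. Suppose instead that $d(x,\cdot)$ were $\nu$-a.e.\ constant for every $x$. Take $x$ in the support of $\nu$. Then every ball around $x$ has positive mass, so the constant must be $0$, and $\nu=\delta_x$. This contradicts nontriviality.

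Define $F(y):=\sum_{i=1}^n d(x_0,y_i)$ on $X^n$. By the triangle inequality in each coordinate, $F$ is $1$-Lipschitz for $d_1$. Under $\nu^{\otimes n}$, the summands $\xi_i:=d(x_0,y_i)$ are i.i.d.\ and bounded by $\diam X<\infty$, with mean $m$ and variance $\sigma^2:=\Var_\nu(d(x_0,\cdot))\ge V_\varepsilon$. Put $G_n:=(F-nm)/\sqrt n$; it is $n^{-1/2}$-Lipschitz, and $\PartDiam$ scales linearly, so
\[
 \ObsDiam(X_1^n;-\kappa)/\sqrt n\ \ge\ \PartDiam((G_n)_*\nu^{\otimes n};1-\kappa).
\]
By the CLT, or quantitatively by Theorem~\ref{thm:BE}, $(G_n)_*\nu^{\otimes n}\to N(0,\sigma^2)$, with uniform convergence of distribution functions.

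The main step is a lower semicontinuity statement. Let $\tau_n\to\tau$ in the sense of uniform convergence of distribution functions, with $\tau$ absolutely continuous with positive continuous density. I claim that then $\liminf_n\PartDiam(\tau_n;1-\kappa)\ge\PartDiam(\tau;1-\kappa)$. As in the proof of Lemma~\ref{lem:Wp-comp}, we may restrict to closed intervals $I_n=[s_n,t_n]$ with $\tau_n(I_n)\ge1-\kappa$. The uniform Kolmogorov distance $\epsilon_n\to0$ gives $\tau(I_n)\ge 1-\kappa-2\epsilon_n$. Hence $|I_n|\ge \PartDiam(\tau;1-\kappa-2\epsilon_n)$. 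The map $\kappa'\mapsto\PartDiam(\tau;1-\kappa')$ is continuous for a Gaussian, since the optimal interval is the symmetric one $[-\sigma z,\sigma z]$ with $2\Phi(z)-1=1-\kappa'$. Letting $n\to\infty$ gives the claim. Consequently
\[
 \liminf_n\ObsDiam(X_1^n;-\kappa)/\sqrt n\ge 2\sigma\,\Phi^{-1}(1-\kappa/2)\ge 2\sqrt{V_\varepsilon}\,\Phi^{-1}(1-\kappa/2).
\]

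Finally, compare with the profile $\varphi_1$ in the statement. The density $\varphi_1\,dt$ is the half-normal law, i.e.\ the law of $|Z|$ with $Z$ standard Gaussian. Its density is decreasing on $[0,\infty)$, so an optimal interval is $[0,t]$ with $2\Phi(t)-1=1-\kappa$. Thus
\[
 \PartDiam(\varphi_1(t)\,dt;1-\kappa)=\Phi^{-1}(1-\kappa/2),
\]
which is at most the Gaussian value $2\Phi^{-1}(1-\kappa/2)$. One could also obtain exactly the stated constant by using the $1$-Lipschitz observable $|F-nm|$, whose rescaled law converges to the half-normal law. Letting $\varepsilon\downarrow0$ yields the asserted inequality.

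I expect the semicontinuity step to be the only delicate point. The remaining care is to state it with the Berry--Esseen uniform bound, so that no portmanteau or compactness argument on intervals is needed.
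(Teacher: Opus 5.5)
The paper does not prove this statement. It is imported from Ozawa--Shioya \cite{OzawaShioyaProduct} and used only qualitatively, to get $\ObsDiam(X_{n,1};-\kappa)\ge c\sqrt n$ in Lemma~\ref{lem:baseline} and Proposition~\ref{prop:p1}. So there is no in-paper argument to compare with.

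Judged on its own, your proposal is correct and self-contained:
\begin{enumerate}[label=(\roman*)]
\item The observable $F=d_1((x_0,\dots,x_0),\cdot)$ is $1$-Lipschitz.
\item Theorem~\ref{thm:BE} applies, because the summands are bounded and have variance $\sigma^2\ge V_\varepsilon>0$.
\item Your semicontinuity step is sound. For a closed interval $I_n$, the bound $\tau(I_n)\ge\tau_n(I_n)-2\epsilon_n$ uses only the Kolmogorov distance and the continuity of the Gaussian distribution function. The map $\kappa'\mapsto 2\sigma\Phi^{-1}(1-\kappa'/2)$ is continuous.
\end{enumerate}
You state the semicontinuity claim for a general $\tau$ with positive continuous density, but you verify the continuity in the level only for the Gaussian. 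That is all you use, so this is harmless.

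Your route in fact gives twice the stated constant:
\[
 \liminf_{n\to\infty}\frac{\ObsDiam(X_1^n;-\kappa)}{\sqrt n}\ge 2\sqrt{V(X,d,\nu)}\,\Phi^{-1}(1-\kappa/2)=2\sqrt{V(X,d,\nu)}\,\PartDiam(\varphi_1(t)\,dt;1-\kappa).
\]
This is the partial diameter of the centered Gaussian law with variance $V(X,d,\nu)$, rather than of the rescaled half-normal law. The factor $2$ is not a slip. For the uniform two-point space, your bound reproduces the known $\sqrt n\,\Phi^{-1}(1-\kappa/2)$ asymptotics of the Hamming cube. Your remark that $|F-nm|$ recovers exactly the stated half-normal constant is also correct.

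Your argument relies on the same devices the paper already uses: the Berry--Esseen bound and the reduction from Borel sets to closed intervals, as in Lemmas~\ref{lem:Wp-comp} and~\ref{lem:majority}. Since the paper needs only some $c\sqrt n$ lower bound, it could make those two steps independent of \cite{OzawaShioyaProduct}.
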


\begin{lem}\label{lem:baseline}
For $1\le p\le2$ and every fixed $0<\kappa<1$, there exists
$c_{p,\kappa}>0$ such that
\[
 \ObsDiam(X_{n,p};-\kappa)\ge c_{p,\kappa}n^{1/p-1/2}
 \qquad(n\ge1).
\]
For $p>2$ and every fixed $0<\kappa<1$, there exists $c_\kappa>0$ such that
\[
 \ObsDiam(X_{n,p};-\kappa)\ge c_\kappa
 \qquad(n\ge1).
\]
\end{lem}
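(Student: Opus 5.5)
For $1\le p\le2$, the plan is to reduce to the $\ell^1$ case and apply Theorem~\ref{thm:OzawaShioya} to $(X,d,\nu)=([0,1],|\cdot|,\mu)$. This space has finite diameter and is nontrivial because $\mu\ll\cL^1$.

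\emph{Step 1: $V>0$.} Take $x=0$; then $d(0,\cdot)$ is the identity map on $[0,1]$. Its variance under $\mu$ is positive, since $\mu$ is not a Dirac mass (Lemma~\ref{lem:positive-partdiam}). Hence $V(X,d,\mu)>0$. The constant $\PartDiam(\varphi_1(t)\,dt;1-\kappa)$ is also positive by the same absolute-continuity argument as in Lemma~\ref{lem:positive-partdiam}. Theorem~\ref{thm:OzawaShioya} then gives $\ObsDiam(X_{n,1};-\kappa)\ge c\sqrt n$ for all $n\ge n_0$.

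\emph{Step 2: comparison of metrics.} By H\"older, $d_1\le n^{1-1/p}d_p$, so $n^{1/p-1}d_1\le d_p$. Hence if $f$ is $d_1$-$1$-Lipschitz, then $n^{1/p-1}f$ is $d_p$-$1$-Lipschitz. Partial diameters scale linearly under dilation, so
\[
\ObsDiam(X_{n,p};-\kappa)\ge n^{1/p-1}\ObsDiam(X_{n,1};-\kappa)\ge c\,n^{1/p-1/2}
\]
for $n\ge n_0$.

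\emph{Step 3: small $n$ and all $p$.} For $1\le n<n_0$, and for the case $p>2$, use the coordinate projection $x\mapsto x_1$. It is $1$-Lipschitz for every $d_p$, and its pushforward is $\mu$. Therefore
\[
\ObsDiam(X_{n,p};-\kappa)\ge\PartDiam(\mu;1-\kappa)>0
\]
by Lemma~\ref{lem:positive-partdiam}. For $p>2$ this is already the claim. For $p\le2$ and $n<n_0$, note that $n^{1/p-1/2}\le n_0^{1/2}$, so shrinking the constant covers these finitely many $n$.

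\emph{Main obstacle.} The only delicate point is Step 1's nondegeneracy. One must check that Theorem~\ref{thm:OzawaShioya} applies, which requires a nontrivial space with finite diameter, and that $V>0$. Both are immediate here, so the argument is essentially bookkeeping.
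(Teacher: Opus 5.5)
Your proposal is correct and follows the paper's proof essentially step for step. You apply Theorem~\ref{thm:OzawaShioya} to get the $\ell^1$ bound $c\sqrt n$ (after checking $V>0$ and positivity of the half-Gaussian partial diameter), and transfer it via $n^{1/p-1}d_1\le d_p$; as in the paper, the finitely many small $n$ and the case $p>2$ are handled by the first-coordinate projection together with Lemma~\ref{lem:positive-partdiam}.
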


\begin{proof}
The mm-space $([0,1],|\cdot|,\mu)$ is nontrivial and has finite diameter.
Applying the preceding theorem, its lower-bound constant is positive for this
space: $V([0,1],|\cdot|,\mu)>0$, and the partial diameter of the absolutely continuous limit
measure is positive at every level $1-\kappa>0$.
Therefore, for all sufficiently large $n$,
\[
 \ObsDiam(X_{n,1};-\kappa)\ge c\sqrt n.
\]
Let $1<p\le2$ and put $q:=p/(p-1)$. For $x,y\in[0,1]^n$, H\"older's
inequality gives
\[
 n^{1/p-1}d_1(x,y)
 =n^{1/p-1}\sum_{i=1}^n|x_i-y_i|
 \le d_p(x,y).
\]
Hence the identity map
\[
 ([0,1]^n,d_p,\mu^{\otimes n})
 \longrightarrow
 ([0,1]^n,n^{1/p-1}d_1,\mu^{\otimes n})
\]
is $1$-Lipschitz. By the definition of observable diameter and its homogeneity
under scaling of the metric,
\[
 \ObsDiam(X_{n,p};-\kappa)
 \ge n^{1/p-1}\ObsDiam(X_{n,1};-\kappa)
 \ge c n^{1/2-1/q}
 =c n^{1/p-1/2}
\]
for all sufficiently large $n$. The same conclusion for $p=1$ is exactly the
preceding $\ell^1$ estimate. For the remaining finitely many $n$, the
first-coordinate projection and Lemma~\ref{lem:positive-partdiam} give a
positive lower bound; decreasing $c$ if necessary proves the first assertion
for every $n\ge1$.

For $p>2$, the first-coordinate projection is $1$-Lipschitz and pushes
$\mu^{\otimes n}$ forward to $\mu$. Hence Lemma~\ref{lem:positive-partdiam}
gives
\[
 \ObsDiam(X_{n,p};-\kappa)\ge\PartDiam(\mu;1-\kappa)>0,
\]
which proves the second assertion.
\end{proof}

\subsection{Block amplification}

\begin{lem}\label{lem:block}
Let $(Y,d,\nu)$ be an mm-space. Fix $1<p<\infty$ and $0<\kappa_0<1$. Then for every $0<\kappa<1$ there exist $k=k(\kappa,\kappa_0)$ and $c=c(p,\kappa,\kappa_0)>0$ such that
\[
 \ObsDiam(Y^k,d_{p,k},\nu^{\otimes k};-\kappa)
 \ge c\,\ObsDiam(Y,d,\nu;-\kappa_0),
\]
where
\[
 d_{p,k}((x_j),(y_j)):=\left(\sum_{j=1}^k d(x_j,y_j)^p\right)^{1/p}.
\]
More precisely, writing $q:=p/(p-1)$, one may take $c=k^{-1/q}$.
\end{lem}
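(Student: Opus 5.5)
Given a $1$-Lipschitz $f:Y\to\R$ whose push-forward $f_*\nu$ has partial diameter $D$ at level $1-\kappa_0$ (up to $\varepsilon$), we build a $d_{p,k}$-Lipschitz observable on $Y^k$ of majority type. The key Lipschitz estimate is that $F(x_1,\dots,x_k):=\sum_{j=1}^k g(x_j)$ with $g$ $1$-Lipschitz satisfies
\[
|F(x)-F(y)|\le\sum_j d(x_j,y_j)\le k^{1/q}d_{p,k}(x,y)
\]
by H\"older. Hence $k^{-1/q}F$ is $1$-Lipschitz, and everything reduces to showing that $\PartDiam(F_*\nu^{\otimes k};1-\kappa)\gtrsim D$ for a suitable $g$ and $k$. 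A direct sum of copies of $f$ spreads at scale roughly $\sqrt k\,D$, but that needs variance control. So instead I use truncated threshold functions, which turn the problem into a binomial or majority count.

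\textbf{Step 1.} Since $f_*\nu$ has no set of diameter $<D$ with mass $\ge 1-\kappa_0$, for every $t$ the interval $[t,t+D')$ with $D'<D$ has mass $<1-\kappa_0$. Choose a median-type point $t_0$ and split $\R$ into the left part $(-\infty,t_0]$ and the right part $[t_0+D',\infty)$. Picking $t_0$ carefully, e.g.\ as the $\kappa_0/2$-quantile, I want both tails to carry mass $\ge\theta=\theta(\kappa_0)>0$. Concretely, if $t_0$ is the $\kappa_0/2$-quantile of $f_*\nu$, then the left tail has mass $\ge\kappa_0/2$. The right tail $[t_0+D',\infty)$ has mass $\ge 1-\kappa_0/2-(1-\kappa_0)=\kappa_0/2$, because $[t_0,t_0+D')$ has mass $<1-\kappa_0$. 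Set
\[
g:=\min\{\max\{f-t_0,0\},D'\},
\]
which is $1$-Lipschitz with values in $[0,D']$. Then $g=0$ with probability $\ge\kappa_0/2$ and $g=D'$ with probability $\ge\kappa_0/2$.

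\textbf{Step 2.} Consider $F=\sum_{j\le k}g(x_j)$ under $\nu^{\otimes k}$. I would show that for $k=k(\kappa,\kappa_0)$ every interval of length $<cD'$ has mass $<1-\kappa$. The events $\{g(x_j)=0\}$ and $\{g(x_j)=D'\}$ give, for example, the probability that all coordinates are at $0$, which is $\ge(\kappa_0/2)^k$. That is too small once $\kappa$ is close to $1$, so I would instead argue through anticoncentration. Let $N_0$ and $N_1$ count the coordinates with $g=0$ and $g=D'$. Conditionally on the remaining coordinates, $F$ equals a fixed constant plus $D'N_1$, where $N_1$ is binomial-like on the free coordinates. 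Any interval of length $<D'$ contains at most one value of $D'N_1$. For large $k$, the binomial point masses are $O(1/\sqrt{k\theta})$, so the interval has conditional mass $O(1/\sqrt k)$. This beats any fixed $1-\kappa$, but it only yields partial diameter $\ge D'$ rather than $\sqrt k D'$. That is fine, since the claim allows $c=k^{-1/q}$: we get $\PartDiam(F_*\nu^{\otimes k};1-\kappa)\ge D'$, hence
\[
\ObsDiam\ge k^{-1/q}D'.
\]

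\textbf{Step 3.} Let $D'\uparrow D$, then take the sup over $f$. The main obstacle is making the anticoncentration rigorous with constants depending only on $\kappa$ and $\kappa_0$. I would condition on the set $S$ of coordinates where $g\in\{0,D'\}$ and on the values of $g$ outside $S$. Given $S$, the indicators of $\{g=D'\}$ are i.i.d.\ Bernoulli with parameter in $[\theta',1-\theta']$, where $\theta'$ is bounded below using the two mass bounds. A uniform bound $\sup_m\Pr(\mathrm{Bin}(s,\theta')=m)\le C/\sqrt{s\theta'(1-\theta')}$ then applies, e.g.\ via Berry--Esseen (Theorem~\ref{thm:BE}). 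Finally, $|S|\ge k\kappa_0/2$ with high probability by Chebyshev, and we choose $k$ large so that the total mass of any interval of length $<D'$ is below $1-\kappa$.
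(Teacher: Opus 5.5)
Your proposal is correct and follows essentially the same route as the paper: a truncated threshold observable with atoms at $0$ and $D'$, each of mass bounded below in terms of $\kappa_0$; the H\"older bound making $k^{-1/q}\sum_j g(x_j)$ $1$-Lipschitz; and lattice anticoncentration of the sum after conditioning on which coordinates land in the atoms. The differences are only technical. The paper writes $\gamma=2\delta\frac{\delta_0+\delta_L}{2}+(1-2\delta)\eta$ to obtain a fair coin and then uses Stirling together with the exact formula for $\mathbb E(R+1)^{-1}$, whereas your direct conditioning gives a biased coin with parameter in $[\kappa_0/2,1-\kappa_0/2]$, handled by Berry--Esseen and Chebyshev, which works equally well.
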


\begin{proof}
Put $D:=\ObsDiam(Y,d,\nu;-\kappa_0)$. If $D=0$, the assertion is immediate. Assume $D>0$ and
fix an arbitrary $L$ with $0<L<D$. By the definition of observable diameter,
there exists a $1$-Lipschitz map $f:Y\to\R$ such that $\PartDiam(f_*\nu;1-\kappa_0)>L$. Let $\lambda:=f_*\nu$ and set $\delta:=\kappa_0/4$. Let
\[
 F_\lambda(t):=\lambda(( -\infty,t]),
 \qquad
 F_\lambda^{-1}(u):=\inf\{t\in\R\mid F_\lambda(t)\ge u\}
 \quad(0<u<1),
\]
and define $a_0:=F_\lambda^{-1}(\delta)$, $b_0:=F_\lambda^{-1}(1-\delta)$. Since $\delta<1-\delta$ and $F_\lambda^{-1}$ is nondecreasing, $a_0\le b_0$. By the definition of the generalized inverse, $\lambda(( -\infty,a_0))\le\delta\le\lambda(( -\infty,a_0])$ and $\lambda(( -\infty,b_0))\le1-\delta\le\lambda(( -\infty,b_0])$. The second pair of inequalities is equivalent to $\lambda((b_0,\infty))\le\delta\le\lambda([b_0,\infty))$. Then
\[
 \lambda([a_0,b_0])
 \ge1-\lambda(( -\infty,a_0))-\lambda((b_0,\infty))
 \ge1-2\delta>1-\kappa_0.
\]
If $b_0-a_0<L$, the interval $[a_0,b_0]$ would have diameter strictly
smaller than $L$ and $\lambda$-mass greater than $1-\kappa_0$, contrary to $\PartDiam(\lambda;1-\kappa_0)>L$. Hence $b_0-a_0\ge L$.

Define $Y_0(y):=\min\{\max\{f(y)-a_0,0\},L\}$, $y\in Y$. The truncation map $t\mapsto\min\{\max\{t-a_0,0\},L\}$ is $1$-Lipschitz on $\R$, hence $Y_0$ is $1$-Lipschitz. Since $\{f\le a_0\}\subset\{Y_0=0\}$, we have $\nu(Y_0=0)\ge\lambda(( -\infty,a_0])\ge\delta$. Since $b_0-a_0\ge L$, the inclusion $\{f\ge b_0\}\subset\{f\ge a_0+L\}\subset\{Y_0=L\}$ gives $\nu(Y_0=L)\ge\lambda([b_0,\infty))\ge\delta$. Let $\gamma:=(Y_0)_*\nu$. Then the finite measure $\gamma-\delta\delta_0-\delta\delta_{L}$ is nonnegative and has total mass $1-2\delta$. Therefore there exists a
probability measure $\eta$ on $[0,L]$ such that
\[
 \gamma
 =\delta\delta_0+\delta\delta_{L}+(1-2\delta)\eta
 =2\delta\frac{\delta_0+\delta_{L}}{2}+(1-2\delta)\eta.
\]

Let $q:=p/(p-1)$. On $Y^k$, let $Y_j(y_1,\ldots,y_k):=Y_0(y_j)$, $1\le j\le k$, and set $F_k(y_1,\ldots,y_k):=k^{-1/q}\sum_{j=1}^kY_j(y_1,\ldots,y_k)$. For $x=(x_j)$ and $y=(y_j)$, H\"older's inequality gives
\[
 \begin{aligned}
 |F_k(x)-F_k(y)|
 &\le k^{-1/q}\sum_{j=1}^k|Y_0(x_j)-Y_0(y_j)|\\
 &\le k^{-1/q}\sum_{j=1}^k d(x_j,y_j)\\
 &\le k^{-1/q}k^{1/q}
 \left(\sum_{j=1}^kd(x_j,y_j)^p\right)^{1/p}\\
 &=d_{p,k}(x,y).
 \end{aligned}
\]
Thus $F_k$ is $1$-Lipschitz.

For each coordinate, use the decomposition $\gamma=2\delta\frac{\delta_0+\delta_{L}}{2}+(1-2\delta)\eta$ as follows. Independently for each coordinate, with probability $2\delta$ choose a
value from $(\delta_0+\delta_{L})/2$, and with probability $1-2\delta$ choose a
value according to $\eta$. For each $j=1,\ldots,k$, let $I_j$ be the random
variable which equals $1$ if the first alternative is chosen in the $j$-th coordinate and equals $0$ otherwise. Thus the variables $I_1,\ldots,I_k$ are independent and $\Pr(I_j=1)=2\delta, \Pr(I_j=0)=1-2\delta$. Define $R:=\sum_{j=1}^k I_j$. Thus $R$ is the number of coordinates for which the first alternative is chosen,
and it has the binomial distribution with parameters $k$ and $2\delta$, which we
denote by $R\sim\operatorname{Bin}(k,2\delta)$. Fix the values of $I_1,\ldots,I_k$, and also fix the values chosen according
to $\eta$ in all coordinates for which $I_j=0$. Put $J:=\{j\in\{1,\ldots,k\}\mid I_j=1\}$. Then $|J|=R$. The only choices that remain are, independently for $j\in J$,
between $0$ and $L$, each with probability $1/2$. After the choices above have
been fixed, put $A_\eta:=\sum_{\{j\mid I_j=0\}}Y_j$. Thus $A_\eta$ is fixed. Therefore, for $S_k:=\sum_{j=1}^kY_j$, we have $S_k=A_\eta+L B_R$, where $B_R$ has the binomial distribution with parameters $R$ and $1/2$: $B_R\sim\operatorname{Bin}(R,1/2)$. Since the possible values of $L B_R$ are $0,L,2L,\ldots,R L$, any interval
of diameter strictly smaller than $L$ contains at most one of them. Therefore,
for every interval $I\subset\R$ with $\diam I<L$, after the choices above
have been fixed, the probability that $S_k\in I$ is at most $\max_{0\le m\le R}\Pr(B_R=m)$.

By Stirling's formula, there exists a universal constant $C_{\mathrm{bin}}>0$
such that the largest atom of the symmetric binomial law satisfies
\[
 \max_{0\le m\le R}\Pr(B_R=m)
 =2^{-R}\binom{R}{\lfloor R/2\rfloor}
 \le \frac{C_{\mathrm{bin}}}{\sqrt{R+1}}
 \qquad(R\ge0).
\]
Averaging over the preceding choices therefore gives
\[
 \sup_{\diam I<L}\Pr(S_k\in I)
 \le C_{\mathrm{bin}}\,\mathbb E(R+1)^{-1/2}.
\]
Since $R\sim\operatorname{Bin}(k,2\delta)$, the elementary identity
\[
 \mathbb E\frac1{R+1}
 =\sum_{j=0}^k\frac1{j+1}\binom{k}{j}(2\delta)^j(1-2\delta)^{k-j}
 =\frac{1-(1-2\delta)^{k+1}}{2\delta(k+1)}
 \le\frac1{2\delta(k+1)}
\]
follows by using $\binom{k}{j}/(j+1)=\binom{k+1}{j+1}/(k+1)$.
Cauchy--Schwarz now yields
\[
 \mathbb E(R+1)^{-1/2}
 \le\left(\mathbb E(R+1)^{-1}\right)^{1/2}
 \le\frac{C_\delta}{\sqrt{k+1}}.
\]
Choose $k=k(\kappa,\kappa_0)$ so large that $C_{\mathrm{bin}}C_\delta/\sqrt{k+1}<1-\kappa$. Thus,
\[
\sup_{\diam I<L}(S_k)_*\nu^{\otimes k}(I)
= \sup_{\diam I<L}\Pr(S_k\in I) < 1-\kappa.
\]

Let $B\subset\R$ be a Borel set with $(F_k)_*\nu^{\otimes k}(B)\ge1-\kappa$. If $\diam B<k^{-1/q}L$, then the set $k^{1/q}B$ has diameter smaller than $L$ and is contained in the interval $I_B:=[\inf(k^{1/q}B),\sup(k^{1/q}B)]$, which also has diameter smaller than $L$. Since $k^{1/q}F_k=S_k$, we would have
\[
 1-\kappa\le (F_k)_*\nu^{\otimes k}(B) = (S_k)_*\nu^{\otimes k}(k^{1/q}B)
\le (S_k)_*\nu^{\otimes k}(I_B) < 1-\kappa.
\]
a contradiction. Hence every such $B$ has $\diam B\ge k^{-1/q}L$. Therefore $\PartDiam((F_k)_*\nu^{\otimes k};1-\kappa) \ge k^{-1/q}L$. Since $F_k$ is $1$-Lipschitz, $\ObsDiam(Y^k,d_{p,k},\nu^{\otimes k};-\kappa) \ge k^{-1/q}L$. The integer $k=k(\kappa,\kappa_0)$ chosen above is independent of $L$.
Letting $L\uparrow D$ gives $\ObsDiam(Y^k,d_{p,k},\nu^{\otimes k};-\kappa) \ge k^{-1/q}D$. Thus the assertion holds with $c:=k^{-1/q}$.
\end{proof}

\subsection{Majority lower bound}

\begin{lem}\label{lem:majority}
Fix $1\le p<\infty$ and $0<\kappa_0<1/2$. Put
\[
 \beta_{p,*}:=\frac1{2p}-\frac1{2(\alpha_*+1)}.
\]
If $\beta_{p,*}\ge0$, then there exists $c_{p,\kappa_0}>0$ such that, for all sufficiently large $n$,
\[
 \ObsDiam(X_{n,p};-\kappa_0)\ge c_{p,\kappa_0} n^{\beta_{p,*}}.
\]
If $\beta_{p,*}<0$, then there exists $c_{\kappa_0}>0$ such that
\[
 \ObsDiam(X_{n,p};-\kappa_0)\ge c_{\kappa_0}
 \qquad(n\ge1).
\]
\end{lem}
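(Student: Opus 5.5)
We handle the case $\beta_{p,*}<0$ with the first-coordinate projection. This projection is $1$-Lipschitz for $d_p$ and pushes $\mu^{\otimes n}$ forward to $\mu$. Lemma~\ref{lem:positive-partdiam} then gives $\ObsDiam(X_{n,p};-\kappa_0)\ge\PartDiam(\mu;1-\kappa_0)>0$ for every $n\ge1$.

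Now assume $\beta_{p,*}\ge0$. By symmetry (reflect $x\mapsto 1-x$), we may assume $\alpha_+=\alpha_*$. We construct a majority-type observable on the right side of $a$ as follows.

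Fix the scale $r=r_n:=n^{-1/(2(\alpha_*+1))}$, so that $\theta:=\mu([a,a+r])\asymp r^{\alpha_*+1}=n^{-1/2}$. Define the one-dimensional $1$-Lipschitz ramp $h(x):=\min\{\max\{x-a,0\},r\}$. Then $h=0$ on $[0,a]$, $h=r$ on $[a+r,1]$, and $h$ is strictly between these values only on $(a,a+r)$, a set of mass $\theta\lesssim n^{-1/2}$. Consider
\[
 F(x):=n^{-1/q}\sum_{i=1}^n h(x_i),\qquad q:=p/(p-1),
\]
with the convention $n^{-1/q}=1$ when $p=1$. By H\"older's inequality, exactly as in the proof of Lemma~\ref{lem:block}, $F$ is $d_p$-$1$-Lipschitz.

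Write $\sum_i h(x_i)=rN+E$. Here $N:=\#\{i: x_i\ge a+r\}$ is $\operatorname{Bin}(n,\mu([a+r,1]))$, and $0\le E\le rM$ with $M:=\#\{i: x_i\in(a,a+r)\}$. Since $\mathbb E M\le n\theta\lesssim n^{1/2}$, Markov's inequality shows that $E\le C_1 r n^{1/2}$ with probability at least $1-\kappa_0/4$, say. The success probability of $N$ is bounded away from $0$ and $1$, because $\mu([0,a])>0$ and $\mu([a+\delta,1])>0$. By Berry--Esseen (Theorem~\ref{thm:BE}), $N$ therefore has Gaussian fluctuations of order $\sqrt n$. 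Any interval of length $\varepsilon\sqrt n$ then carries $N$-mass at most $C\varepsilon+O(n^{-1/2})$.

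The main obstacle is to ensure that the error $E$, of size $O(r\sqrt n)$, does not swamp the $r\sqrt n$-scale fluctuations of $rN$. The plan is to use independence rather than only the size of $E$. Condition on the set of indices with $x_i\in(a,a+r)$ and on those coordinates' values. The remaining coordinates are i.i.d.\ with law $\mu$ conditioned on $[0,a]\cup[a+r,1]$, so $N$ is binomial, conditionally on this data, with $n-M$ trials and success probability bounded away from $0$ and $1$. On the event $M\le C_1n^{1/2}$, conditional Berry--Esseen gives
\[
 \Pr\bigl(rN+E\in I\mid\ldots\bigr)\le C\varepsilon+O(n^{-1/2})
\]
uniformly for every interval $I$ of length $\varepsilon r\sqrt n$. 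Choose $\varepsilon$ so small that $C\varepsilon+\kappa_0/4+o(1)<1-\kappa_0$. Here we use $\kappa_0<1/2$ only for slack; it suffices that $1-\kappa_0$ exceeds the error terms. It follows that
\[
 \PartDiam\bigl(F_*\mu^{\otimes n};1-\kappa_0\bigr)\ge \varepsilon\, n^{-1/q} r\sqrt n .
\]
Finally, $n^{-1/q}\, r\, n^{1/2}=n^{1/p-1/2-1/(2(\alpha_*+1))}$. This is at least $n^{\beta_{p,*}}$ when $p\le2$, since $1/p-1/2\ge 1/(2p)$ there. For $p>2$ it is smaller than $n^{\beta_{p,*}}$.

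To reach the exponent $\beta_{p,*}$ in general, we modify the normalization as follows. Only the coordinates in the bottleneck matter, so we use the sharper Lipschitz bound
\[
 |F(x)-F(y)|\le n^{-1/q}\sum_i|h(x_i)-h(y_i)|.
\]
In place of this linear sum, we plan to use a Hopf--Lax-type observable $G(x):=\inf_y\{\,r\,\#\{i:y_i\ge a+r\}\text{-type count}+d_p(x,y)\}$, i.e.\ the $d_p$-distance to the set where the count $N$ falls below its median. Moving $m$ coordinates across $[a,a+r]$ costs $r m^{1/p}$ in $d_p$. Hence the set where the count exceeds the median by $m\asymp\sqrt n$ lies at $d_p$-distance at least $r n^{1/(2p)}=n^{\beta_{p,*}}$ from the set where it falls below the median. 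Both sets have mass bounded below by Berry--Esseen. Therefore the $1$-Lipschitz function $x\mapsto\min\{\operatorname{dist}_{d_p}(x,A),\,rn^{1/(2p)}\}$, where $A$ is the below-median set, has partial diameter $\gtrsim n^{\beta_{p,*}}$.

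The key estimate to verify is this separation bound, that moving a point from $A$ to the above-median set requires displacing $\gtrsim\sqrt n$ coordinates by distance $\gtrsim r$ each. It holds because coordinates already in $(a,a+r)$ number only $O(\sqrt n)$ with high probability, so we intersect the sets with the event $M\le C_1\sqrt n$ and choose the margin above the median as $m=C_2\sqrt n$ with $C_2>C_1$. This separation step is what I expect to be the crux of the proof.
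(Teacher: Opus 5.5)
Your final construction, the truncated $d_p$-distance to the set where a crossing count lies below its median, matches the paper's. There the separation comes from $\gtrsim\sqrt n$ coordinates that must each move by $\gtrsim r$, and the bottleneck count $M$ is handled separately. But the step you call the crux does not close with the constants ordered as you describe.

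The two-point partial-diameter argument needs $\{f=0\}$ and $\{f\ge D\}$ each to have mass strictly greater than $\kappa_0$. Mass merely bounded below is not enough. The far set is contained in $\{N\ge\mathrm{med}+C_2\sqrt n\}$. So, asymptotically, you need $1-\Phi(C_2/\sigma)\ge\kappa_0$, where $\sigma$ is the standard deviation of one summand of $N$. That is, $C_2\le\sigma\Phi^{-1}(1-\kappa_0)$, and this bound tends to $0$ as $\kappa_0\uparrow1/2$.

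On the other hand, at your fixed scale $r_n=n^{-1/(2(\alpha_*+1))}$ one has $\mathbb E M\ge c_+\sqrt n/(\alpha_*+1)$ and $\Var M\le\mathbb E M$. Hence $\Pr(M\le C_1\sqrt n)\to0$ whenever $C_1<c_+/(\alpha_*+1)$. With the Markov bound you wrote, you even need $C_1$ of order $C_+/((\alpha_*+1)\kappa_0)$, which also fails for small $\kappa_0$. So ``choose $C_2>C_1$'' is incompatible with the mass requirement, for instance for $\kappa_0$ near $1/2$.

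The missing ingredient is a free small constant in the scale. The paper takes $r_n=c_2n^{-1/(2(\alpha_*+1))}$. It first fixes the margin $\ell_n=\lceil c_1\sqrt n\rceil$ from $\kappa_0$ via Berry--Esseen. Only then does it choose $c_2$ so small that $\mathbb E M_n\le C_3c_2^{\alpha_*+1}\sqrt n$ is a negligible fraction of $c_1\sqrt n$. This costs only the constant factor $c_2$ in the final bound $c_4n^{\beta_{p,*}}$.

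Separately, your claim that the linear observable $n^{-1/q}\sum_ih(x_i)$ already reaches $n^{\beta_{p,*}}$ for $p\le2$ is wrong. The inequality $1/p-1/2\ge1/(2p)$ holds only for $p\le1$; at $p=2$ it reads $0\ge1/4$. The linear sum attains the correct exponent only at $p=1$. So the distance-to-set observable is needed for every $1<p<\infty$, not just $p>2$. Since it also covers $p=1$, you can drop the linear construction entirely.
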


\begin{proof}
Reflection $x\mapsto1-x$ is an isometry of $[0,1]$ and interchanges the two
sides of the interior zero. Applying it in every coordinate gives an
isomorphic product mm-space. We may therefore assume without loss of
generality that
\begin{equation}\label{eq:majority-right-max}
 \alpha_+=\alpha_*.
\end{equation}
Let $Y_1,\ldots,Y_n$ be the coordinate random variables on
$([0,1]^n,\mu^{\otimes n})$, that is,
\[
 Y_i(x_1,\ldots,x_n):=x_i
 \qquad(1\le i\le n).
\]
Let $\theta:=\mu((a,1])\in(0,1)$ and define
\[
 N_n:=\sum_{i=1}^n\mathbf 1_{\{Y_i>a\}}.
\]
Then $N_n$ has distribution $\operatorname{Bin}(n,\theta)$. Let
$m_n:=\min\{m\in\mathbb Z\mid\Pr(N_n\le m)>1/2\}$.
Then $m_n$ is a median of $N_n$ in the usual sense, and Hamza's uniform bound
\cite{Hamza}*{Theorem~2} gives
\begin{equation}\label{eq:bin-med}
 |m_n-n\theta|<\log 2.
\end{equation}
Put $\sigma_\theta:=\sqrt{\theta(1-\theta)}$ and
\[
 C_\theta
 :=C_{\mathrm{BE}}
 \frac{\theta(1-\theta)^3+(1-\theta)\theta^3}{\sigma_\theta^3}.
\]
Theorem~\ref{thm:BE}, applied to the independent random variables
$\mathbf 1_{\{Y_i>a\}}-\theta$, gives
\[
 \sup_{z\in\R}
 \left|
 \Pr\left(\frac{N_n-n\theta}{\sigma_\theta\sqrt n}\le z\right)
 -\Phi(z)
 \right|
 \le\frac{C_\theta}{\sqrt n}.
\]

Choose $c_1>0$ so small that $1-\Phi\left(\frac{2c_1}{\sigma_\theta}\right)>\kappa_0$. Decrease $c_1$ if necessary and then choose $\varepsilon>0$ such that $1-\Phi\left(\frac{2c_1}{\sigma_\theta}\right) >\kappa_0+3\varepsilon$. For each $n$, put $\ell_n:=\lceil c_1\sqrt n\rceil$. Then $c_1\sqrt n\le\ell_n<c_1\sqrt n+1$, so $\ell_n-1<c_1\sqrt n$. By \eqref{eq:bin-med}, for all sufficiently large $n$,
\[
 \frac{m_n+\ell_n-1-n\theta}{\sigma_\theta\sqrt n}
 <\frac{m_n+c_1\sqrt n-n\theta}{\sigma_\theta\sqrt n}
 \le\frac{2c_1}{\sigma_\theta}.
\]
Since $N_n$ is integer-valued, $\{N_n\ge m_n+\ell_n\} =\{N_n\le m_n+\ell_n-1\}^c$. Therefore Theorem~\ref{thm:BE} gives
\[
 \begin{aligned}
 \Pr(N_n\ge m_n+\ell_n)
 &=1-\Pr(N_n\le m_n+\ell_n-1)\\
 &\ge1-
 \Phi\left(\frac{m_n+\ell_n-1-n\theta}
 {\sigma_\theta\sqrt n}\right)-\frac{C_\theta}{\sqrt n}\\
 &\ge1-\Phi\left(\frac{2c_1}{\sigma_\theta}\right)
 -\frac{C_\theta}{\sqrt n}\\
 &>\kappa_0+2\varepsilon
 \end{aligned}
\]
for all sufficiently large $n$.

Set $H_n:=\{x\in[0,1]^n\mid N_n(x)\le m_n\}$ and $f_n(x):=d_p(x,H_n)$. $f_n$ is $1$-Lipschitz. The set $H_n$ is closed because its complement $\{N_n\ge m_n+1\}$ is open. Hence $f_n=0$ exactly on $H_n$. Also,
by the median property, $\mu^{\otimes n}(H_n)=\Pr(N_n\le m_n)\ge\frac12$.

Choose $c_2>0$, to be fixed below, and put $r_n:=c_2n^{-1/(2(\alpha_*+1))}$. For all sufficiently large $n$, $r_n<\delta$. By \eqref{eq:majority-right-max}, the right-side exponent is $\alpha_*$. Define $C_3:=\frac{C_+}{\alpha_*+1}$. By \eqref{eq:power},
\[
 \begin{aligned}
 \mu((a,a+r_n])
 &=\int_a^{a+r_n}\rho(x)\,dx\\
 &\le C_+\int_0^{r_n}s^{\alpha_*}\,ds
 =\frac{C_+}{\alpha_*+1}r_n^{\alpha_*+1}\\
 &=C_3c_2^{\alpha_*+1}n^{-1/2}.
 \end{aligned}
\]
Let $M_n(x):=\#\{i\in\{1,\ldots,n\}\mid a<x_i\le a+r_n\}$. Then $\mathbb EM_n=n\mu((a,a+r_n]) \le C_3c_2^{\alpha_*+1}\sqrt n$. Choose $c_2>0$ so small that $\frac{2C_3c_2^{\alpha_*+1}}{c_1}<\varepsilon$. Markov's inequality then gives
\[
 \Pr\left(M_n>\frac{c_1}{2}\sqrt n\right)
 \le\frac{2\mathbb EM_n}{c_1\sqrt n}
 <\varepsilon.
\]
Consequently the event
\[
 E_n:=
 \left\{N_n\ge m_n+\ell_n\right\}
 \cap
 \left\{M_n\le\frac{c_1}{2}\sqrt n\right\}
\]
satisfies
\[
 \mu^{\otimes n}(E_n)
 \ge \Pr(N_n\ge m_n+\ell_n)
 -\Pr\left(M_n>\frac{c_1}{2}\sqrt n\right)
 >\kappa_0+\varepsilon>\kappa_0.
\]

Fix $x\in E_n$ and $y\in H_n$. Let $A_x:=\{i\mid x_i>a\}$, $A_y:=\{i\mid y_i>a\}$. Then $|A_x|-|A_y|=N_n(x)-N_n(y)\ge\ell_n\ge c_1\sqrt n$. Since
\[
 |A_x|-|A_y|
 =|A_x\setminus A_y|-|A_y\setminus A_x|
 \le|A_x\setminus A_y|,
\]
we have $|A_x\setminus A_y|\ge c_1\sqrt n$. For every $i\in A_x\setminus A_y$, one has $x_i>a$ and $y_i\le a$.
Among these indices, at most $M_n(x)\le(c_1/2)\sqrt n$ satisfy $a<x_i\le a+r_n$. Therefore at least $(c_1/2)\sqrt n$ indices satisfy
simultaneously $x_i>a+r_n$, $y_i\le a$. For each such index, $|x_i-y_i|\ge r_n$. It follows that $d_p(x,y)^p =\sum_{i=1}^n|x_i-y_i|^p \ge\frac{c_1}{2}\sqrt n\,r_n^p$. Define $c_4:=\left(\frac{c_1}{2}\right)^{1/p}c_2$. Taking the infimum over $y\in H_n$ yields
\[
 f_n(x)
 \ge\left(\frac{c_1}{2}\right)^{1/p}n^{1/(2p)}r_n
 =c_4n^{\frac1{2p}-\frac1{2(\alpha_*+1)}}
 =c_4n^{\beta_{p,*}}.
\]
Thus
\[
 \mu^{\otimes n}\bigl(\{f_n\ge c_4n^{\beta_{p,*}}\}\bigr)
 \ge\mu^{\otimes n}(E_n)>\kappa_0.
\]
On the other hand, $\mu^{\otimes n}(\{f_n=0\}) =\mu^{\otimes n}(H_n)\ge\frac12>\kappa_0$. Let $B\subset\R$ be Borel and suppose $(f_n)_*\mu^{\otimes n}(B)\ge1-\kappa_0$. If $0\notin B$, then $\{f_n=0\}$ is contained in the complement of $f_n^{-1}(B)$, whose measure would therefore be at least $1/2>\kappa_0$,
a contradiction. Hence $0\in B$. If $B\cap[c_4n^{\beta_{p,*}},\infty)=\varnothing$, then $\{f_n\ge c_4n^{\beta_{p,*}}\}$ is contained in the complement of $f_n^{-1}(B)$, whose measure would be greater than $\kappa_0$, again a
contradiction. Hence $B$ meets $[c_4n^{\beta_{p,*}},\infty)$.
Therefore $\diam B\ge c_4n^{\beta_{p,*}}$. Taking the infimum over all such $B$ gives
\[
 \PartDiam((f_n)_*\mu^{\otimes n};1-\kappa_0)
 \ge c_4n^{\beta_{p,*}}.
\]
Since $f_n$ is $1$-Lipschitz, $\ObsDiam(X_{n,p};-\kappa_0) \ge c_4n^{\beta_{p,*}}$.

Suppose finally that $\beta_{p,*}<0$. The coordinate map
$P_1(x_1,\ldots,x_n):=x_1$ is $1$-Lipschitz because
$|x_1-y_1|\le d_p(x,y)$. By Lemma~\ref{lem:positive-partdiam},
$\PartDiam(\mu;1-\kappa_0)>0$, and the coordinate projection gives the
asserted dimension-independent lower bound.
\end{proof}

\subsection{Critical lower bound}

Assume now $1<p\le2$, $q=p/(p-1)$, and
$\alpha_*=q-1=1/(p-1)$. By reflection if necessary, we may assume
\begin{equation}\label{eq:critical-right}
 \alpha_+=\alpha_*=q-1.
\end{equation}
For $0<r<\delta$, put $L_r:=\log(\delta/r)$ and define
$g_r:[0,1]\to[-1,1]$ by
\[
 g_r(x):=
 \begin{cases}
 -1,&0\le x\le a+r,\\[1mm]
 -1+\dfrac{2\log((x-a)/r)}{L_r},&a+r<x<a+\delta,\\[3mm]
 1,&a+\delta\le x\le1.
 \end{cases}
\]
Set $h_r:=g_r-\int_0^1g_r\,d\mu$.

\begin{lem}\label{lem:var-ramp}
There exist $c>0$ and $r_0>0$ such that $\Var_\mu(h_r)\ge c$ for
$0<r<r_0$.
\end{lem}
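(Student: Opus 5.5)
Since $h_r$ differs from $g_r$ by a constant, $\Var_\mu(h_r)=\Var_\mu(g_r)$. The plan is to bound this variance below by a quantity independent of $r$. The function $g_r$ takes the value $-1$ on the whole interval $[0,a+r]$ and the value $1$ on $[a+\delta,1]$, so the idea is to use two disjoint sets of $\mu$-mass bounded below uniformly in $r$ on which $g_r$ is constant with values differing by $2$.

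Put $A:=[0,a-\delta]$ and $B:=[a+\delta,1]$. By the normalization $[a-3\delta,a+3\delta]\subset(0,1)$, both sets have positive length (at least $2\delta$), and by \eqref{eq:away-lower} we get $\mu(A)\ge c_\delta\cL^1(A)=:m_A>0$ and $\mu(B)\ge c_\delta\cL^1(B)=:m_B>0$. For every $0<r<\delta$ we have $A\subset[0,a+r]$, hence $g_r=-1$ on $A$ and $g_r=1$ on $B$.

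We then use the elementary inequality for a probability measure $\mu$, a real function $g$ and disjoint sets $A,B$:
\[
\Var_\mu(g)=\tfrac12\iint (g(x)-g(y))^2\,d\mu(x)\,d\mu(y)\ge \iint_{A\times B}(g(x)-g(y))^2\,d\mu(x)\,d\mu(y).
\]
The inequality holds because the integrand is nonnegative and the domain $A\times B\cup B\times A$ counts the product $A\times B$ twice. With $g=g_r$ the right-hand side equals $4\mu(A)\mu(B)\ge 4m_Am_B$. Hence $\Var_\mu(h_r)\ge c:=4m_Am_B$ for all $0<r<r_0:=\delta$.

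Nothing here is genuinely hard. The only points needing care are that $A$ and $B$ have positive length, which the normalization of $\delta$ guarantees, and that $g_r$ is measurable and bounded. The latter is clear because $g_r$ is continuous on $(a+r,a+\delta)$ and takes values in $[-1,1]$, so the variance is finite. The estimate is in fact uniform in $r$, with no need to take $r$ small, and it does not use the critical assumption \eqref{eq:critical-right}; that assumption will presumably matter only in the later Hopf--Lax and Berry--Esseen steps.
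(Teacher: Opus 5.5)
Your proof is correct, but it follows a different route from the paper's. The paper argues by a limit: $g_r\to g_0:=-\mathbf 1_{[0,a)}+\mathbf 1_{(a,1]}$ pointwise off $\{a\}$ as $r\downarrow0$. Since $|g_r|\le1$ and $\mu(\{a\})=0$, dominated convergence gives $\Var_\mu(g_r)\to4\mu([0,a))\mu((a,1])>0$. The lower bound then holds only for $r$ below a non-explicit $r_0$.

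You instead bound the variance directly. You restrict the pair representation $\Var_\mu(g)=\frac12\iint(g(x)-g(y))^2\,d\mu\,d\mu$ to the two plateaus $[0,a-\delta]$ and $[a+\delta,1]$, where $g_r\equiv-1$ and $g_r\equiv1$ for every $0<r<\delta$. This yields the explicit constant $4\mu([0,a-\delta])\mu([a+\delta,1])$ on the whole range $0<r<\delta$, with no limiting argument.

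Your route is more elementary and quantitative. The cost is that it needs \eqref{eq:away-lower} and the normalization $[a-3\delta,a+3\delta]\subset(0,1)$ to see that the plateaus have positive mass; both are available at this point of the paper. The paper's route needs only that each side of $a$ has positive mass. It also identifies the sharp limiting value $4\mu([0,a))\mu((a,1])$, which dominates your constant.

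For the application in Proposition~\ref{prop:critical-lower}, only some uniform $v_0>0$ is used, so either argument suffices.
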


\begin{proof}
Since $h_r$ differs from $g_r$ by a constant,
$\Var_\mu(h_r)=\Var_\mu(g_r)$. We prove that as $r \downarrow 0$, $g_r$ converges pointwise
away from $a$ to
\[
 g_0(x):=
 \begin{cases}
 -1,&0\le x<a,\\
 1,&a<x\le1.
 \end{cases}
\]
For $x<a$, one has $g_r(x)=-1$ for every $r>0$. If
$a<x<a+\delta$, then for all sufficiently small $r<x-a$,
\[
 g_r(x)
 =-1+2\frac{\log((x-a)/r)}{\log(\delta/r)}
 =-1+2\frac{\log(1/r)+\log(x-a)}{\log(1/r)+\log\delta}
 \longrightarrow1.
\]
If $a+\delta\le x\le1$, then $g_r(x)=1$. Thus $g_r(x)\to g_0(x)$ for
every $x\ne a$. Because $|g_r|\le1$ and $\mu(\{a\})=0$, dominated
convergence gives convergence of the first and second moments. Put
$\lambda_-:=\mu([0,a))$ and $\lambda_+:=\mu((a,1])$. Both are positive,
$\lambda_-+\lambda_+=1$, and
$\int_{[0,1]} g_0\,d\mu=-\lambda_-+\lambda_+$. Hence
\[
 \Var_\mu(g_0)=1-(\lambda_+-\lambda_-)^2=4\lambda_-\lambda_+>0.
\]
Therefore $\Var_\mu(h_r)=\Var_\mu(g_r)\to4\lambda_-\lambda_+$, which
proves the assertion.
\end{proof}

For $\tau>0$ and bounded $h:[0,1]\to\R$, define
\[
 Q_\tau h(x):=\inf_{y\in[0,1]}
 \left\{h(y)+\frac{|x-y|^p}{\tau^{p-1}}\right\}.
\]

\begin{lem}\label{lem:hopf}
There exist $c,C,r_0>0$ such that, if $0<r<r_0$ and
$0<\tau\le cr^qL_r^{q-1}$, then
\[
 0\le\int_0^1(h_r-Q_\tau h_r)\,d\mu\le C\tau L_r^{1-q}
\]
and
\[
 0\le h_r(x)-Q_\tau h_r(x)\le C\frac{\tau}{r^qL_r^q}
 \qquad(x\in[0,1]).
\]
\end{lem}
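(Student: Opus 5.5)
The plan is to write the deficit as a one-dimensional supremum, $h_r(x)-Q_\tau h_r(x)=\sup_{y}\{g_r(x)-g_r(y)-|x-y|^p/\tau^{p-1}\}$, since the additive constant cancels. Taking $y=x$ gives nonnegativity of both quantities. Because $g_r$ is nondecreasing, only $y<x$ can contribute, and we may write $y=x-t$ with $t>0$.

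\emph{Step 1 (pointwise bound).} The key input is the local Lipschitz profile
\[
 \ell(y):=\frac{2}{L_r\max\{y-a,r\}}\,\mathbf 1_{(a,a+\delta)}(y).
\]
It dominates $|g_r'|$ a.e., is nonincreasing on $(a,1]$, and is bounded by $2/(rL_r)$ everywhere. The global bound $g_r(x)-g_r(x-t)\le 2t/(rL_r)$ gives
\[
 h_r(x)-Q_\tau h_r(x)\le\sup_{t>0}\Bigl\{\Lambda t-\frac{t^p}{\tau^{p-1}}\Bigr\}=c_p\tau\Lambda^q,
 \qquad \Lambda:=\frac{2}{rL_r},
\]
which is the elementary Legendre computation. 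This is the second assertion, $\le C\tau/(r^qL_r^q)$, and it requires no restriction on $\tau$.

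\emph{Step 2 (refined bound for $x=a+s$, $s\ge2r$).} Split according to whether $t\le s/2$ or $t>s/2$.

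For $t\le s/2$, the segment $[x-t,x]$ lies in $\{y-a\ge s/2\}$, where $\ell\le 4/(L_rs)$. The same Legendre computation then bounds this part of the supremum by $C\tau(L_rs)^{-q}$.

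For $t>s/2$, integrating $\ell$ gives a gain of at most $(2/L_r)(1+\log(s/r))$, while the cost is at least $(s/2)^p/\tau^{p-1}$. The hypothesis $\tau\le cr^qL_r^{q-1}$ gives $\tau^{p-1}\le c^{p-1}r^pL_r$, using $q(p-1)=p$ and $(q-1)(p-1)=1$. Writing $u:=s/r\ge2$, this branch is therefore nonpositive as soon as
\[
 u^p\ \ge\ C c^{p-1}(1+\log u).
\]
Choosing $c$ small makes this hold for all $u\ge2$, so for $s\ge2r$ only short jumps matter, and
\[
 h_r(a+s)-Q_\tau h_r(a+s)\le C\tau(L_rs)^{-q}.
\]
The same reasoning handles $s\ge\delta$, where $g_r$ is locally constant near $x$. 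For $x\le a$ the deficit is $0$, since $g_r=-1$ there. This calibration of $c$, which forces all long jumps to be unprofitable, is the step I expect to be the main obstacle. A naive use of the global Lipschitz constant loses powers of $L_r$ when $q>2$.

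\emph{Step 3 (integration).} Use \eqref{eq:critical-right}, so that $\rho(a+s)\le C_+s^{q-1}$ on $(0,2\delta)$.

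On $s\le2r$, combine the pointwise bound of Step 1 with $\mu((a,a+2r])\lesssim r^q$. This contributes $\lesssim\tau L_r^{-q}\le\tau L_r^{1-q}$.

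On $2r\le s\le\delta$, Step 2 gives
\[
 \int_{2r}^{\delta}\tau L_r^{-q}s^{-q}s^{q-1}\,ds\ \lesssim\ \tau L_r^{-q}\log(\delta/r)=\tau L_r^{1-q}.
\]
Beyond $a+\delta$ the deficit vanishes for $r$ small, because any profitable jump from $x\ge a+\delta$ must enter $(a,a+\delta)$ and is then controlled by the same estimate. Summing the pieces gives $\int(h_r-Q_\tau h_r)\,d\mu\le C\tau L_r^{1-q}$. Here $r_0$ is chosen so that $L_r\ge1$ and $2r<\delta$.
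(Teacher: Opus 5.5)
Your proof is correct, and its skeleton matches the paper's. Both arguments use the Legendre bound $(p-1)p^{-q}\tau K_r^q$ with $K_r=2/(rL_r)$ for the pointwise estimate, reduce to short jumps, use the local Lipschitz bound $\lesssim 1/(sL_r)$ at $x=a+s$, and integrate against $\rho(a+s)\le C_+s^{q-1}$, which turns $\int ds/s$ into one factor of $L_r$.

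The only real difference is the localization step. You split at the $x$-dependent threshold $t=s/2$ and rule out long jumps by comparing the total logarithmic gain $(2/L_r)(1+\log(s/r))$ with the cost $(s/2)^p/\tau^{p-1}$. The paper uses nothing but the global Lipschitz constant. If $|x-y|\ge\tau K_r^{q-1}$, then $|x-y|^p/\tau^{p-1}\ge K_r|x-y|\ge g_r(x)-g_r(y)$, so $y$ cannot beat $y=x$. The infimum is therefore over $|x-y|\le\tau K_r^{q-1}\le 2^{q-1}c\,r$, which is at most $r/4$ for small $c$, uniformly in $x$.

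So your remark that a naive use of the global Lipschitz constant loses powers of $L_r$ is mistaken. The hypothesis $\tau\le cr^qL_r^{q-1}$ is calibrated exactly so that this crude localization works at scale $r$. The step you flag as the main obstacle is in fact the easy one. Your log-gain comparison is valid, but it does more work than needed.

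One sentence needs correcting: the deficit does not vanish just to the right of $a+\delta$. At $x=a+\delta$, the jump to $y=x-t$ gains at least $2t/(\delta L_r)$ at cost $t^p/\tau^{p-1}$, which is profitable for small $t$. Hence $h_r-Q_\tau h_r>0$ on a small interval $[a+\delta,a+\delta+\epsilon)$.

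This is harmless. Your Step 2 already gives $h_r-Q_\tau h_r\le C\tau(L_r\delta)^{-q}$ for all $x\ge a+\delta$, and integrating this against the probability measure $\mu$ contributes only $O(\tau L_r^{-q})$. The paper instead runs the ramp estimate up to $a+2\delta$, where the power-type density bound is still available. It asserts vanishing only for $x\ge a+2\delta$, where every admissible $y$ lies in $[a+\delta,1]$.
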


\begin{proof}
Since $h_r-g_r$ is constant and the Hopf--Lax operator commutes with
addition of constants, $h_r-Q_\tau h_r=g_r-Q_\tau g_r$. We first record
two estimates for an arbitrary $K$-Lipschitz function $h$. For
$s:=|x-y|$, the inequality $h(x)-h(y)\le Ks$ gives
\[
 h(x)-Q_\tau h(x)
 \le\sup_{s\ge0}\left(Ks-\frac{s^p}{\tau^{p-1}}\right)
 =(p-1)p^{-q}\tau K^q.
\]
Thus
\begin{equation}\label{eq:HL-global}
 0\le h(x)-Q_\tau h(x)\le(p-1)p^{-q}\tau K^q.
\end{equation}
Moreover, the minimization defining $Q_\tau h(x)$ is local. Let
$s:=|x-y|$. If $s\ge\tau K^{q-1}$, then
$s^p/\tau^{p-1} \ge Ks$.
Since $h(y)\ge h(x)-Ks$, it follows that
\[
 h(y)+\frac{|x-y|^p}{\tau^{p-1}}
 \ge h(x)-Ks+\frac{s^p}{\tau^{p-1}}
 \ge h(x).
\]
Since the choice $y=x$ yields the value $h(x)$ in the infimum defining
$Q_\tau h(x)$, such points $y$ may be omitted. Hence
\begin{equation}\label{eq:HL-local}
 Q_\tau h(x)
 =
 \inf_{\substack{y\in[0,1]\\ |x-y|\le\tau K^{q-1}}}
 \left\{h(y)+\frac{|x-y|^p}{\tau^{p-1}}\right\}.
\end{equation}

For $g_r$, $K_r:=\Lip(g_r)=2/(rL_r)$. The localization radius is
$R_r:=\tau K_r^{q-1}$. If $\tau\le cr^qL_r^{q-1}$ and $c$ is sufficiently
small, then $R_r\le r/4$. Equation~\eqref{eq:HL-global} gives the global
pointwise bound
\[
 0\le g_r(x)-Q_\tau g_r(x)\le C_0\frac{\tau}{r^qL_r^q},
 \qquad C_0:=(p-1)p^{-q}2^q.
\]

It remains to estimate the integral. If $x\le a+r$, then
$g_r(x)=-1=\min g_r$. Hence, for every $y\in[0,1]$,
\[
 g_r(y)+\frac{|x-y|^p}{\tau^{p-1}}\ge-1,
\]
whereas the choice $y=x$ gives the value $g_r(x)=-1$. Thus
$Q_\tau g_r(x)=g_r(x)$ and
\[
 \int_{[0,a+r]}(g_r-Q_\tau g_r)\,d\mu=0.
\]
On $I_r:=(a+r,a+2r]$, the critical right-side bound in \eqref{eq:power}
and \eqref{eq:critical-right} gives a constant $C_1>0$, independent of
$r$, such that $\mu(I_r)\le C_1r^q$. Therefore the global pointwise
estimate above yields
\[
 \int_{I_r}(g_r-Q_\tau g_r)\,d\mu
 \le \mu(I_r)C_0\frac{\tau}{r^qL_r^q}
 \le C_0C_1\tau L_r^{-q}.
\]

Now let $a+2r<x<a+2\delta$ and put $u:=x-a$. By \eqref{eq:HL-local},
we may restrict the infimum to
$|x-y|\le R_r\le r/4<u/8$. For such a $y$, every point $z$ on the
segment joining $x$ to $y$ satisfies
\[
 \frac78u<u-\frac r4\le z-a\le u+\frac r4<\frac98u.
\]
On the logarithmic part of $g_r$ we therefore have
$|g_r'(z)|=2/((z-a)L_r)\le C/(uL_r)$, while the derivative is zero on
the constant parts. Thus $|g_r'|\le K_{x,r}:=C/(uL_r)$ for
$\cL^1$-a.e.\ $z$ on the segment. Since $g_r$ is absolutely continuous,
for $s:=|x-y|$ we obtain
\[
 |g_r(x)-g_r(y)|\le K_{x,r}s,
 \qquad\text{and hence}\qquad
 g_r(x)-g_r(y)\le K_{x,r}s.
\]
Consequently the same one-variable maximization used in
\eqref{eq:HL-global}, with $y$ restricted as in \eqref{eq:HL-local}, gives
\[
 0\le g_r(x)-Q_\tau g_r(x)
 \le \sup_{s\ge0}
 \left(K_{x,r}s-\frac{s^p}{\tau^{p-1}}\right)
 =(p-1)p^{-q}\tau K_{x,r}^q
 \le C\frac{\tau}{u^qL_r^q}.
\]
Writing $x=a+u$, we have $d\mu(x)=\rho(a+u)\,du$. Since
$\rho(a+u)\le C_+u^{q-1}$ for $\cL^1$-a.e.\ $0<u<2\delta$, the preceding
pointwise bound gives
\[
 \int_{a+2r}^{a+2\delta}(g_r-Q_\tau g_r)\,d\mu
 \le\frac{C\tau}{L_r^q}\int_{2r}^{2\delta}
 \frac{\rho(a+u)}{u^q}\,du
 \le\frac{C\tau}{L_r^q}\int_{2r}^{2\delta}\frac{du}{u}
 \le C\tau L_r^{1-q}.
\]
Finally, if $x\ge a+2\delta$, every relevant $y$ lies to the right of
$a+\delta$ for all sufficiently small $r$: indeed,
$y\ge x-R_r\ge a+2\delta-r/4\ge a+\delta$. Hence
$g_r(y)=g_r(x)=1$ for every relevant $y$. The restricted infimum is
therefore $1$, and thus
\[
 \int_{[a+2\delta,1]}(g_r-Q_\tau g_r)\,d\mu=0.
\]
Combining the three regions gives
\[
 \int_0^1(h_r-Q_\tau h_r)\,d\mu
 =\int_0^1(g_r-Q_\tau g_r)\,d\mu
 \le C_0C_1\tau L_r^{-q}+C\tau L_r^{1-q}
 \le C\tau L_r^{1-q},
\]
where the last inequality uses $L_r^{-q}\le L_r^{1-q}$ for $L_r\ge1$.
This proves the integral estimate.
\end{proof}

The following is the converse part of \cite{GRS}*{Lemma~1.7}.
\begin{prop}[Observable diameter and enlargement]\label{prop:OD-enlarge}
Let $(X,d,\nu)$ be an mm-space, $0<\kappa<1/2$, and
\[
 R>\ObsDiam(X,d,\nu;-\kappa).
\]
Then every Borel set $A\subset X$ with $\nu(A)\ge1/2$ satisfies
\[
 \nu(A_R)\ge1-\kappa,
 \qquad
 A_R:=\{x\in X\mid d(x,A)<R\}.
\]
\end{prop}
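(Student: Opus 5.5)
The plan is to argue by contradiction, using the distance function to $A$ as the test observable. Fix a Borel set $A\subset X$ with $\nu(A)\ge1/2$; replacing $A$ by its closure changes neither $d(\cdot,A)$ nor $A_R$. Put $f(x):=d(x,A)$. This $f$ is $1$-Lipschitz, vanishes on $A$, and satisfies $\{f<R\}=A_R$. Now suppose, for contradiction, that $\nu(A_R)<1-\kappa$, that is, $\nu(\{f\ge R\})>\kappa$. We will show $\PartDiam(f_*\nu;1-\kappa)\ge R$. Since $f$ is $1$-Lipschitz, this gives $\ObsDiam(X,d,\nu;-\kappa)\ge R$, contradicting the choice of $R$.

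To get the partial-diameter bound, take any Borel $B\subset\R$ with $(f_*\nu)(B)\ge1-\kappa$. First, $0\in B$: otherwise $f^{-1}(B)$ is disjoint from $A$, so $\nu(f^{-1}(B))\le1-\nu(A)\le1/2<1-\kappa$, using $\kappa<1/2$. Second, $B$ meets $[R,\infty)$: otherwise $f^{-1}(B)$ is disjoint from $\{f\ge R\}$, which has mass $>\kappa$, so $\nu(f^{-1}(B))<1-\kappa$. Together these give $\diam B\ge R$, and taking the infimum over $B$ gives $\PartDiam(f_*\nu;1-\kappa)\ge R$. This is the same two-point separation argument already used at the end of the proof of Lemma~\ref{lem:majority}.

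The only delicate point is strictness. The second step needs the strict inequality $\nu(\{f\ge R\})>\kappa$, which is exactly what the contradiction hypothesis provides. The final contradiction needs the strict inequality $R>\ObsDiam$ from the statement, which is why $A_R$ is defined with $d(x,A)<R$ rather than $\le R$. No results beyond the definitions are needed, so I do not expect a genuine obstacle.
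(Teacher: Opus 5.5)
Your proof is correct and complete. Since $\nu(A)\ge1/2$, the set $A$ is nonempty, so $f=d(\cdot,A)$ is a finite $1$-Lipschitz (hence Borel) function. The first exclusion step uses exactly $\kappa<1/2$, and the second uses the strict inequality $\nu(\{f\ge R\})>\kappa$. The bound $\PartDiam(f_*\nu;1-\kappa)\ge R$ then contradicts $R>\ObsDiam(X,d,\nu;-\kappa)$.

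The paper does not prove this proposition itself. It only quotes it as the converse part of Lemma~1.7 of Gozlan, Roberto, and Samson. Your argument replaces that external citation with a few lines that use only the definitions. It is the same two-point separation already carried out at the end of the proof of Lemma~\ref{lem:majority}, so including it makes the paper more self-contained at essentially no cost.

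The preliminary passage to the closure of $A$ is harmless but unnecessary, since you only use that $f$ vanishes on $A$ and that $\{f<R\}=A_R$.
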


\begin{prop}\label{prop:critical-lower}
Assume $\alpha_*=1/(p-1)$.
For every fixed $0<\kappa<1$, there exists
$c_\kappa>0$ such that
\[
 \ObsDiam(X_{n,p};-\kappa)
 \ge c_\kappa n^{1/p-1/2}(\log n)^{1/p}
\]
for every $n$.
\end{prop}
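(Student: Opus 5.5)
The plan is to combine Proposition~\ref{prop:OD-enlarge} with a product Hopf--Lax argument applied to the logarithmic ramp $h_r$. First assume $0<\kappa<1/2$; the range $\kappa\ge1/2$ is handled at the end. By reflection we are in the situation \eqref{eq:critical-right}. Put
\[
 S(x):=\sum_{i=1}^n h_r(x_i).
\]
The key structural fact is that $d_p^p$ is additive over coordinates. Hence the $n$-dimensional Hopf--Lax transform of $S$ with cost $d_p(x,y)^p/\tau^{p-1}$ splits as $\sum_i Q_\tau h_r(x_i)=S(x)-D(x)$, where
\[
 D(x):=\sum_{i=1}^n (h_r-Q_\tau h_r)(x_i)\ge0 .
\]

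Step~1 builds the reference set. The summands $h_r(Y_i)$ are i.i.d. They are centered, bounded by $2$, and have variance $\ge c$ by Lemma~\ref{lem:var-ramp}. Theorem~\ref{thm:BE} therefore applies with constants uniform in small $r$. Choose $z_0>0$ with $\Phi(z_0)>1/2$, and put $A:=\{S\le z_0\sigma_r\sqrt n\}$. Then $\mu^{\otimes n}(A)\ge1/2$ for large $n$. Berry--Esseen also gives $c_1>0$ such that $\Pr(S\ge (z_0+2c_1)\sigma_r\sqrt n)>\kappa+2\varepsilon$.

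Step~2 controls $D$. By Lemma~\ref{lem:hopf}, $\mathbb E D\le Cn\tau L_r^{1-q}$, so by Markov's inequality $D\le C\varepsilon^{-1}n\tau L_r^{1-q}$ off a set of measure $\varepsilon$. The parameters are fixed as follows. Take $r:=C_2n^{-1/(2q)}$, so that $L_r\asymp\log n$. Take $\tau:=c_3n^{-1/2}L_r^{q-1}$ with $c_3$ small. This makes $C\varepsilon^{-1}n\tau L_r^{1-q}\le c_1\sigma_r\sqrt n$. With $C_2$ large, it also makes the hypothesis $\tau\le cr^qL_r^{q-1}$ of Lemma~\ref{lem:hopf} hold.

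Step~3 is the enlargement contradiction. Let $R>\ObsDiam(X_{n,p};-\kappa)$. Proposition~\ref{prop:OD-enlarge} gives $\mu^{\otimes n}(A_R)\ge1-\kappa$. For $x\in A_R$, pick $y\in A$ with $d_p(x,y)<R$. Then
\[
 S(x)-D(x)\le S(y)+R^p/\tau^{p-1}\le z_0\sigma_r\sqrt n+R^p/\tau^{p-1}.
\]
Now take the event $E:=\{S\ge(z_0+2c_1)\sigma_r\sqrt n\}\cap\{D\le c_1\sigma_r\sqrt n\}$, which has measure $>\kappa+\varepsilon$. On $E$ we get $S-D\ge (z_0+c_1)\sigma_r\sqrt n$, so $E$ can meet $A_R$ only if $R^p\ge c_1\sigma_r\sqrt n\,\tau^{p-1}$. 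Suppose $R^p<c_1\sigma_r\sqrt n\,\tau^{p-1}$. Then $E$ lies outside $A_R$, which forces $\mu^{\otimes n}(A_R)<1-\kappa$, a contradiction. Hence
\[
 R^p\gtrsim \sqrt n\,\tau^{p-1}\asymp n^{1/2-(p-1)/2}L_r^{(q-1)(p-1)}=n^{1-p/2}\log n,
\]
using $(q-1)(p-1)=1$. Letting $R$ decrease to $\ObsDiam(X_{n,p};-\kappa)$ gives the bound $\gtrsim n^{1/p-1/2}(\log n)^{1/p}$ for large $n$. Finitely many small $n$ are covered by the coordinate projection and Lemma~\ref{lem:positive-partdiam}.

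For $1/2\le\kappa<1$, fix some $\kappa_0<1/2$ and apply Lemma~\ref{lem:block} with $Y=X_{m,p}$. Since $(X_{m,p})^k$ with $d_{p,k}$ is $X_{mk,p}$, we get $\ObsDiam(X_{mk,p};-\kappa)\ge k^{-1/q}\ObsDiam(X_{m,p};-\kappa_0)$ with $k$ fixed. For general $n$, take $m=\lfloor n/k\rfloor$ and project onto the first $mk$ coordinates; this map is $1$-Lipschitz and pushes the measure forward correctly. Since $m\asymp n$, this yields the same order.

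The main obstacle is Step~2 together with the parameter choice. The localization of Lemma~\ref{lem:hopf} must be used in full: the pointwise bound alone would cost $n\tau/(r^qL_r^q)$, and only the integrated bound $\tau L_r^{1-q}$, which carries the extra factor $L_r$, gains the logarithm. One also has to check that the constraint $\tau\le cr^qL_r^{q-1}$ is compatible with $\tau\asymp n^{-1/2}L_r^{q-1}$, which is exactly what forces $r\asymp n^{-1/(2q)}$.
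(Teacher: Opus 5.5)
Your argument is correct and follows the paper's proof in all essentials: the logarithmic ramp on the critical side, the coordinatewise splitting of the $d_p^p$ Hopf--Lax transform, Proposition~\ref{prop:OD-enlarge}, Berry--Esseen, the scales $r\asymp n^{-1/(2q)}$ and $\tau\asymp n^{-1/2}L_r^{q-1}$, and Lemma~\ref{lem:block} for $\kappa\ge1/2$. The one streamlining is that you bound the Hopf--Lax defect $D$ by Markov's inequality using only the integrated estimate of Lemma~\ref{lem:hopf}, whereas the paper applies Berry--Esseen to $\sum_i q_n(Y_i)$ and so needs the pointwise estimate, the variance comparison for $q_n$, and the median localization \eqref{eq:critical-median}.

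One detail to fix is the choice of $z_0$. You need $1/2<\Phi(z_0)<1-\kappa$, which is possible since $\kappa<1/2$. Otherwise no $c_1$ with $\Pr\bigl(S\ge(z_0+2c_1)\sigma_r\sqrt n\bigr)>\kappa+2\varepsilon$ exists.
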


\begin{proof}
We first prove the estimate at one fixed level $0<\kappa_0<1/2$. For all sufficiently large $n$, put $r_n:=n^{-1/(2q)}$; then $r_n<\delta$. Define $L_n:=\log(\delta/r_n)$, $t_n:=n^{-1/(2(p-1))}$, and $s_n:=\varepsilon^{p-1}n^{(2-p)/2}L_n$, where $\varepsilon>0$ will be chosen later. Put $h_n:=h_{r_n}$ and $\tau_n:=t_ns_n^{1/(p-1)}$. Since $1/(p-1)=q-1$,
\[
 \begin{aligned}
 \tau_n
 &=n^{-1/(2(p-1))}
 \left(\varepsilon^{p-1}n^{(2-p)/2}L_n\right)^{1/(p-1)}\\
 &=\varepsilon
 n^{-1/(2(p-1))+(2-p)/(2(p-1))}
 L_n^{1/(p-1)}\\
 &=\varepsilon n^{-1/2}L_n^{q-1}.
 \end{aligned}
\]
Also $r_n^q=n^{-1/2}$, so $\tau_n=\varepsilon r_n^qL_n^{q-1}$. Assume for now that $0<\varepsilon<c$, where $c$ is the constant in
Lemma~\ref{lem:hopf}. Then Lemma~\ref{lem:hopf} applies for all
sufficiently large $n$.

Define $q_n:=Q_{\tau_n}h_n$ and $\delta_n:=\int_0^1(h_n-q_n)\,d\mu$. The integral estimate in Lemma~\ref{lem:hopf} gives $\delta_n \le C\tau_nL_n^{1-q} =C\varepsilon n^{-1/2}$, so
\begin{equation}\label{eq:critical-bias}
 \sqrt n\,\delta_n\le C\varepsilon.
\end{equation}
The pointwise conclusion of Lemma~\ref{lem:hopf}, applied with
$r=r_n$ and $\tau=\tau_n$, gives
\[
 \|h_n-q_n\|_\infty
 \le C\frac{\tau_n}{r_n^qL_n^q}
 =C\frac{\varepsilon}{L_n}
 \longrightarrow0.
\]
By Lemma~\ref{lem:var-ramp}, there is $v_0>0$ such that $\Var_\mu(h_n)\ge v_0$ for all sufficiently large $n$. Let $Y$ be a random variable with distribution $\mu$. We now show that the variances of $q_n$ are also uniformly positive. For square-integrable random variables $U,V$,
\[
 \left|\sqrt{\Var(U)}-\sqrt{\Var(V)}\right|
 \le\|(U-\mathbb EU)-(V-\mathbb EV)\|_2.
\]
Moreover
\[
 \|(U-\mathbb EU)-(V-\mathbb EV)\|_2
 \le\|U-V\|_2+|\mathbb E(U-V)|
 \le2\|U-V\|_2.
\]
Applying this with $U=q_n(Y)$ and $V=h_n(Y)$ gives
\[
 \left|\sqrt{\Var(q_n)}-\sqrt{\Var(h_n)}\right|
 \le2\|q_n-h_n\|_{L^2(\mu)}
 \le2\|q_n-h_n\|_\infty\longrightarrow0.
\]
Consequently, for all sufficiently large $n$,
\[
 \sqrt{\Var_\mu(q_n)}
 \ge \sqrt{v_0}-2\|q_n-h_n\|_\infty
 \ge \frac{\sqrt{v_0}}2.
\]
Thus, with $\sigma_*:=\sqrt{v_0}/2$, which is independent of
$\varepsilon$, we have
\begin{equation}\label{eq:critical-var}
 \Var_\mu(q_n)\ge\sigma_*^2
\end{equation}
for all sufficiently large $n$.

Let $Y_1,\ldots,Y_n$ be independent copies of $Y$, and put $S_n:=\sum_{i=1}^nh_n(Y_i)$ and $\sigma_{h,n}^2:=\Var_\mu(h_n)$. For every $i=1,\ldots,n$,
\[
 \mathbb E h_n(Y_i)=0,
 \qquad |h_n(Y_i)|\le 2,
 \qquad v_0\le \Var\bigl(h_n(Y_i)\bigr)=\sigma_{h,n}^2\le 4.
\]
Hence Theorem~\ref{thm:BE} gives
\[
 \sup_{z\in\R}\left|\Pr\left(\frac{S_n}{\sqrt n\,\sigma_{h,n}}\le z\right)-\Phi(z)\right|\le\frac{C}{\sqrt n}.
\]
Let $M_n$ be the lower median of $S_n$ and put $z_n:=M_n/(\sqrt n\,\sigma_{h,n})$. From $\Pr(S_n\le M_n)\ge1/2$ and $\Pr(S_n<s)<1/2$ for every $s<M_n$, letting $s\uparrow M_n$ and using the continuity of $\Phi$ yields
\[
 \frac12-\frac{C}{\sqrt n}\le\Phi(z_n)\le\frac12+\frac{C}{\sqrt n}.
\]
Since $\Phi$ is continuous and strictly increasing with $\Phi(0)=1/2$, we conclude that
\begin{equation}\label{eq:critical-median}
 z_n=\frac{M_n}{\sqrt n\,\sigma_{h,n}}\longrightarrow0.
\end{equation}

Define $F_n(x_1,\ldots,x_n):=s_n\sum_{i=1}^nh_n(x_i)$ and put $R_n:=\ObsDiam(X_{n,p};-\kappa_0)$. Let $A_n:=\left\{x\in[0,1]^n\mid\sum_{i=1}^nh_n(x_i)\le M_n\right\}$. By the definition of $M_n$, $\mu^{\otimes n}(A_n)\ge\frac12$. For $\zeta>0$, $R_n+\zeta>R_n$, so Proposition~\ref{prop:OD-enlarge}
gives
\begin{equation}\label{eq:critical-enlarge}
 \mu^{\otimes n}((A_n)_{R_n+\zeta})\ge1-\kappa_0.
\end{equation}

Define the \emph{product inf-convolution}
\[
 Q_{t_n}^{(n)}F_n(x)
 :=\inf_{y\in[0,1]^n}
 \left\{F_n(y)+\frac{d_p(x,y)^p}{t_n^{p-1}}\right\}.
\]
If $x\in(A_n)_{R_n+\zeta}$, there exists $y\in A_n$ satisfying $d_p(x,y)<R_n+\zeta$. Since $y\in A_n$, $F_n(y)=s_n\sum_{i=1}^nh_n(y_i)\le s_nM_n$. Substituting this $y$ into the definition of $Q_{t_n}^{(n)}F_n(x)$ gives $Q_{t_n}^{(n)}F_n(x) \le s_nM_n+\frac{(R_n+\zeta)^p}{t_n^{p-1}}$. Together with \eqref{eq:critical-enlarge},
\begin{equation}\label{eq:critical-prob-upper}
 \Pr\left(
 Q_{t_n}^{(n)}F_n
 >s_nM_n+\frac{(R_n+\zeta)^p}{t_n^{p-1}}
 \right)
 \le\kappa_0.
\end{equation}

We now compute the product inf-convolution exactly. Since $d_p(x,y)^p=\sum_{i=1}^n|x_i-y_i|^p$, we have
\begin{align}\label{eq:critical-separation}
 Q_{t_n}^{(n)}F_n(x)
 &=\inf_{y_1,\ldots,y_n\in[0,1]}
 \sum_{i=1}^n
 \left(s_nh_n(y_i)+\frac{|x_i-y_i|^p}{t_n^{p-1}}\right)\\
 \notag &=\sum_{i=1}^n
 \inf_{y_i\in[0,1]}
 \left(s_nh_n(y_i)+\frac{|x_i-y_i|^p}{t_n^{p-1}}\right)\\
 \notag &=s_n \sum_{i=1}^n \inf_{y_i\in[0,1]}
 \left(h_n(y_i)+\frac{|x_i-y_i|^p}{\tau_n^{p-1}}\right)\\
 \notag &=s_n\sum_{i=1}^nq_n(x_i).
\end{align}

Since $\int_0^1 h_n\,d\mu=0$ and $\delta_n=\int_0^1(h_n-q_n)\,d\mu$, we have $\mathbb E q_n(Y_i)=-\delta_n$. Put $\sigma_n^2:=\Var_\mu(q_n)$ and
\[
 Z_n:=\frac{\sum_{i=1}^n(q_n(Y_i)-\mathbb Eq_n(Y_i))}
 {\sqrt n\,\sigma_n}.
\]
Combining \eqref{eq:critical-prob-upper} and
\eqref{eq:critical-separation}, the event in
\eqref{eq:critical-prob-upper} is
\[
 \left\{
 s_n\sum_{i=1}^nq_n(Y_i)
 >s_nM_n+\frac{(R_n+\zeta)^p}{t_n^{p-1}}
 \right\}.
\]
Since $\sum_{i=1}^nq_n(Y_i) =\sqrt n\,\sigma_n Z_n-n\delta_n$, dividing by $s_n\sqrt n\,\sigma_n$ shows that $\Pr(Z_n>B_n)\le\kappa_0$, where
\[
 B_n:=
 \frac{M_n}{\sqrt n\,\sigma_n}
 +\frac{(R_n+\zeta)^p}
 {s_nt_n^{p-1}\sqrt n\,\sigma_n}
 +\frac{\sqrt n\,\delta_n}{\sigma_n}.
\]

The variables $q_n(Y_i)$ are uniformly bounded because $\|q_n-h_n\|_\infty\to0$ and $\|h_n\|_\infty\le2$. By
\eqref{eq:critical-var}, their variances are uniformly bounded below. Thus
Theorem~\ref{thm:BE} again gives
\[
\sup_{z\in\R}|\Pr(Z_n\le z)-\Phi(z)|\le\frac{C}{\sqrt n}.
\]
Choose $z_0>0$ such that $1-\Phi(z_0)>\kappa_0$. Then, for all sufficiently large $n$, $\Pr(Z_n>z_0)>\kappa_0$. If $B_n<z_0$, then $\Pr(Z_n>B_n)\ge\Pr(Z_n>z_0)>\kappa_0$, contradicting $\Pr(Z_n>B_n)\le\kappa_0$. Hence $B_n\ge z_0$. Therefore
\[
 \frac{(R_n+\zeta)^p}
 {s_nt_n^{p-1}\sqrt n\,\sigma_n}
 \ge
 z_0-\frac{M_n}{\sqrt n\,\sigma_n}
 -\frac{\sqrt n\,\delta_n}{\sigma_n}.
\]
By \eqref{eq:critical-median}, $\sigma_{h,n}\le2$, and \eqref{eq:critical-var}, the middle term satisfies
\[
 \left|\frac{M_n}{\sqrt n\,\sigma_n}\right|=\frac{\sigma_{h,n}}{\sigma_n}|z_n|\le\frac{2}{\sigma_*}|z_n|\longrightarrow0.
\]
By \eqref{eq:critical-bias}, the last term is bounded by $C\varepsilon/\sigma_*$. Choose $\varepsilon>0$ sufficiently small that $\varepsilon<c$ and $\frac{C\varepsilon}{\sigma_*}\le\frac{z_0}{4}$, and then choose $n$ sufficiently large that $\left|\frac{M_n}{\sqrt n\,\sigma_n}\right|\le\frac{z_0}{4}$. Then $\frac{(R_n+\zeta)^p} {s_nt_n^{p-1}\sqrt n\,\sigma_n} \ge\frac{z_0}{2}$. Since $\sigma_n\ge\sigma_*$, $(R_n+\zeta)^p \ge\frac{z_0\sigma_*}{2}s_nt_n^{p-1}\sqrt n$. Now
\[
 t_n^{p-1}\sqrt n
 =\left(n^{-1/(2(p-1))}\right)^{p-1}\sqrt n
 =n^{-1/2}\sqrt n=1.
\]
Thus $(R_n+\zeta)^p\ge cs_n$. Letting $\zeta\downarrow0$ gives $R_n^p\ge cs_n =c\varepsilon^{p-1}n^{(2-p)/2}L_n$. Since $L_n=\log\delta+\frac1{2q}\log n \asymp\log n$, we obtain $R_n\ge c n^{(2-p)/(2p)}(\log n)^{1/p}$. Finally $\frac{2-p}{2p}=\frac1p-\frac12$, so $R_n\ge c n^{1/p-1/2}(\log n)^{1/p}$. This proves the estimate at level $\kappa_0$.

Now fix an arbitrary $0<\kappa<1$. Let $k=k(\kappa,\kappa_0)$ be given by Lemma~\ref{lem:block}, and for each $n$ put $m:=\left\lfloor\frac nk\right\rfloor$. For all sufficiently large $n$, the fixed-level conclusion established in the first part of this proof gives $\ObsDiam(X_{m,p};-\kappa_0) \ge c m^{1/p-1/2}(\log m)^{1/p}$. Applying Lemma~\ref{lem:block} directly to $X_{m,p}$ gives a constant $c'>0$,
independent of $m$, such that $\ObsDiam(X_{km,p};-\kappa) \ge c' m^{1/p-1/2}(\log m)^{1/p}$. Let $\pi_{km}:X_{n,p}\to X_{km,p}$ be the projection onto the first $km$ coordinates. It is $1$-Lipschitz and $(\pi_{km})_*\mu^{\otimes n}=\mu^{\otimes km}$. Hence every $1$-Lipschitz
function $g:X_{km,p}\to\R$ gives the $1$-Lipschitz function $g\circ\pi_{km}:X_{n,p}\to\R$ with the same push-forward measure. Taking
the supremum over such $g$ yields $\ObsDiam(X_{n,p};-\kappa) \ge \ObsDiam(X_{km,p};-\kappa)$. Since $k$ is fixed, $m\asymp n$, $\log m\asymp\log n$. Hence, for all sufficiently large $n$, $\ObsDiam(X_{n,p};-\kappa) \ge c n^{1/p-1/2}(\log n)^{1/p}$. It remains only to cover the finitely many smaller $n$. For $n=1$ the
right-hand side is zero. For each of the remaining finitely many integers $n\ge2$, the coordinate projection $P_1:X_{n,p}\to[0,1]$ is $1$-Lipschitz,
and therefore, by Lemma~\ref{lem:positive-partdiam},
\[
 \ObsDiam(X_{n,p};-\kappa)
 \ge\PartDiam(\mu;1-\kappa)>0.
\]
The quantity $n^{1/p-1/2}(\log n)^{1/p}$ is bounded on this finite set.
After decreasing $c$ once more, the asserted estimate holds for every $n\ge1$.
\end{proof}

\begin{prop}\label{prop:lower-le2}
For $1<p\le2$ and every fixed $0<\kappa<1$, as $n\to\infty$,
\[
 \ObsDiam(X_{n,p};-\kappa)
 \gtrsim
 \begin{cases}
 n^{1/p-1/2},&\alpha_*<1/(p-1),\\
 n^{1/p-1/2}(\log n)^{1/p},&\alpha_*=1/(p-1),\\
 n^{\frac1{2p}-\frac1{2(\alpha_*+1)}},&\alpha_*>1/(p-1).
 \end{cases}
\]
\end{prop}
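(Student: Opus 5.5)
The plan is to assemble Proposition~\ref{prop:lower-le2} from the three lower-bound results already established, one for each regime of $\alpha_*$. Throughout, $1<p\le2$ and $0<\kappa<1$ are fixed.

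In the subcritical case $\alpha_*<1/(p-1)$, the first assertion of Lemma~\ref{lem:baseline} gives $\ObsDiam(X_{n,p};-\kappa)\ge c_{p,\kappa}n^{1/p-1/2}$ for every $n\ge1$, which is already the claim. In the critical case $\alpha_*=1/(p-1)$, Proposition~\ref{prop:critical-lower} gives exactly the bound $c_\kappa n^{1/p-1/2}(\log n)^{1/p}$ at every level $\kappa$.

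The supercritical case $\alpha_*>1/(p-1)$ needs one extra step, because Lemma~\ref{lem:majority} only covers levels $\kappa_0<1/2$. First compute $\beta_{p,*}=\frac1{2p}-\frac1{2(\alpha_*+1)}$. Since $\alpha\mapsto\frac1{2p}-\frac1{2(\alpha+1)}$ is increasing and equals $1/p-1/2\ge0$ at $\alpha=1/(p-1)$, we get $\beta_{p,*}>1/p-1/2\ge0$. Then proceed as follows.
\begin{enumerate}
\item Fix $\kappa_0:=1/4$. The first alternative of Lemma~\ref{lem:majority} gives $\ObsDiam(X_{m,p};-\kappa_0)\ge c\,m^{\beta_{p,*}}$ for all large $m$.
\item For arbitrary $\kappa$, take $k=k(\kappa,\kappa_0)$ from Lemma~\ref{lem:block}, applied with $Y=X_{m,p}$ and $m:=\lfloor n/k\rfloor$. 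This gives $\ObsDiam(X_{km,p};-\kappa)\ge k^{-1/q}c\,m^{\beta_{p,*}}$.
\item The projection onto the first $km$ coordinates is $1$-Lipschitz and pushes $\mu^{\otimes n}$ to $\mu^{\otimes km}$, so $\ObsDiam(X_{n,p};-\kappa)\ge\ObsDiam(X_{km,p};-\kappa)$. Since $k$ is fixed, $m\asymp n$, and hence $\ObsDiam(X_{n,p};-\kappa)\gtrsim n^{\beta_{p,*}}$.
\end{enumerate}
This is the same argument used at the end of the proof of Proposition~\ref{prop:critical-lower}.

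The only point needing care is Step~2, the transfer from level $\kappa_0<1/2$ to an arbitrary $\kappa$ through Lemma~\ref{lem:block}. Here one must check that $(X_{m,p})^k$ with the $\ell^p$-combination of the $d_p$ metrics is exactly $X_{km,p}$. This holds because $d_{p,k}$ built from $d_p$ on $[0,1]^m$ is $d_p$ on $[0,1]^{km}$. It is also worth confirming that the block and comparison constants depend only on $p$, $\kappa$ and $\kappa_0$, so that the bound is uniform in $n$. Since the statement asserts the bound only for sufficiently large $n$, the finitely many small $n$ need no separate treatment.
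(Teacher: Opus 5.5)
Your proposal is correct and follows the paper's proof essentially step for step. You use Lemma~\ref{lem:baseline} in the subcritical case and Proposition~\ref{prop:critical-lower} in the critical case; in the supercritical case you check $\beta_{p,*}>0$, apply Lemma~\ref{lem:majority} at a fixed level $\kappa_0<1/2$, then Lemma~\ref{lem:block} with $Y=X_{m,p}$ and $m=\lfloor n/k\rfloor$, and finally project onto the first $km$ coordinates.
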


\begin{proof}
Fix $0<\kappa<1$.

If $\alpha_*< 1/(p-1)$, Lemma~\ref{lem:baseline} gives the required subcritical lower bound.

If $\alpha_*= 1/(p-1)$, Proposition~\ref{prop:critical-lower} gives the required critical lower bound.

Suppose now that $\alpha_*> 1/(p-1)$. Fix $0<\kappa_0<1/2$. Since $1<p\le2$, the inequality $\alpha_*>1/(p-1) \ge 1$ implies $\alpha_*+1>p$, and therefore $\beta_{p,*}:=\frac1{2p}-\frac1{2(\alpha_*+1)}>0$. Lemma~\ref{lem:majority} gives, for all sufficiently large $m$, $\ObsDiam(X_{m,p};-\kappa_0) \ge c m^{\beta_{p,*}}$. Applying Lemma~\ref{lem:block} to $Y=X_{m,p}$, we obtain a fixed integer $k=k(\kappa,\kappa_0)$ and a constant $c'>0$, both independent of $m$,
such that $\ObsDiam(X_{km,p};-\kappa) \ge c'm^{\beta_{p,*}}$. For a general sufficiently large $n$, set $m:=\left\lfloor\frac nk\right\rfloor$. Then $km\le n$. Let $\pi_{km}:X_{n,p}\to X_{km,p}$ be the projection onto
the first $km$ coordinates. The map $\pi_{km}$ is $1$-Lipschitz and satisfies $(\pi_{km})_*\mu^{\otimes n}=\mu^{\otimes km}$. Therefore every $1$-Lipschitz function $g:X_{km,p}\to\R$ gives the $1$-Lipschitz function $g\circ\pi_{km}:X_{n,p}\to\R$ with the same push-forward measure. Taking
the supremum over all such $g$ gives
\[
 \ObsDiam(X_{n,p};-\kappa)
 \ge \ObsDiam(X_{km,p};-\kappa)
 \ge c'm^{\beta_{p,*}}.
\]
Since $k$ is fixed, $m\asymp n$, and hence $m^{\beta_{p,*}}\asymp n^{\beta_{p,*}}$. Thus $\ObsDiam(X_{n,p};-\kappa) \ge c n^{\beta_{p,*}}$. This is the required supercritical lower bound.
\end{proof}

\section{The range \texorpdfstring{$p>2$}{p>2}}
Throughout this section, fix $p>2$.

\subsection{Transport-entropy estimates}

The following is the case $\alpha(t)=t/C$ of
\cite{GozlanLeonard}*{Proposition~4.3}.

\begin{prop}[Tensorization]\label{prop:tensor-Tp}
Let $(X,d,\sigma)$ be an mm-space satisfying $T_p(C)$ for a constant $C>0$.
Then, for every $n\ge1$, the product measure $\sigma^{\otimes n}$ satisfies $T_p(C)$ on $X^n$ equipped with the $\ell^p$ product metric
\[
 d_p(x,y)^p:=\sum_{i=1}^n d(x_i,y_i)^p.
\]
\end{prop}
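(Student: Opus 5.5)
The plan is to prove the $T_p(C)$ inequality on $(X^n,d_p)$ by induction on $n$, following Marton's coupling argument. The argument rests on two facts. First, the cost $d_p(x,y)^p=\sum_i d(x_i,y_i)^p$ is additive over coordinates. Second, relative entropy obeys the chain rule under disintegration. The case $n=1$ is the hypothesis.

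For the inductive step, assume $\sigma^{\otimes(n-1)}$ satisfies $T_p(C)$ on $(X^{n-1},d_p)$. Let $\nu$ be a Borel probability measure on $X^n=X\times X^{n-1}$. We may assume $H(\nu\mid\sigma^{\otimes n})<\infty$, since otherwise there is nothing to prove; in particular $\nu\ll\sigma^{\otimes n}$. Since $X$ is Polish, disintegrate $\nu(dx_1,dx')=\nu_1(dx_1)\,\nu^{x_1}(dx')$, where $\nu_1$ is the first marginal and $x_1\mapsto\nu^{x_1}$ is a measurable kernel into probability measures on $X^{n-1}$. The chain rule for relative entropy then gives
\[
 H(\nu\mid\sigma^{\otimes n})
 =H(\nu_1\mid\sigma)+\int_X H(\nu^{x_1}\mid\sigma^{\otimes(n-1)})\,d\nu_1(x_1).
\]

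Next, construct a glued coupling. Choose an optimal (or $\varepsilon$-optimal) coupling $\pi_1\in\Pi(\nu_1,\sigma)$. For each $x_1$, choose $\pi^{x_1}\in\Pi(\nu^{x_1},\sigma^{\otimes(n-1)})$ whose $p$-cost is at most $W_p(\nu^{x_1},\sigma^{\otimes(n-1)})^p+\varepsilon$, depending measurably on $x_1$. Define
\[
 \pi(dx_1,dx',dy_1,dy'):=\pi_1(dx_1,dy_1)\,\pi^{x_1}(dx',dy').
\]
Its first marginal is $\nu_1(dx_1)\nu^{x_1}(dx')=\nu$. The second marginal of $\pi^{x_1}$ is $\sigma^{\otimes(n-1)}$ for every $x_1$, so it does not depend on $x_1$. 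Hence the second marginal of $\pi$ is $\sigma\otimes\sigma^{\otimes(n-1)}=\sigma^{\otimes n}$; this is where the product structure of the target is used.

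By additivity of the cost, the cost of $\pi$ is
\[
 \int d(x_1,y_1)^p\,d\pi_1+\int\!\!\int d_p(x',y')^p\,d\pi^{x_1}\,d\nu_1(x_1)
 \le W_p(\nu_1,\sigma)^p+\int W_p(\nu^{x_1},\sigma^{\otimes(n-1)})^p\,d\nu_1+2\varepsilon.
\]
Applying $T_p(C)$ for $\sigma$ to the first term and the inductive hypothesis to the integrand bounds the right-hand side by $C$ times the chain-rule expression plus $2\varepsilon$. Letting $\varepsilon\downarrow0$ gives $W_p(\nu,\sigma^{\otimes n})^p\le C\,H(\nu\mid\sigma^{\otimes n})$, with the same constant $C$.

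The main obstacle is the measurable choice of the conditional couplings $x_1\mapsto\pi^{x_1}$. My first attempt would be to invoke the measurable selection of optimal plans for a continuous cost on Polish spaces (Villani, \emph{Optimal Transport}, Cor.~5.22). That result requires only lower semicontinuity of the cost and applies here because $X^{n-1}$ is Polish and $d_p^p$ is continuous. If that causes trouble, I would fall back on a countable partition: approximate $\nu_1$-a.e.\ the map $x_1\mapsto\nu^{x_1}$ by kernels constant on Borel pieces, choose $\varepsilon$-optimal couplings piecewise, and pass to the limit using lower semicontinuity of $W_p$. Finiteness of the terms is not an issue, since finite entropy makes the right-hand sides finite.
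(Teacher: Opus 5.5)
Your proof is correct. The paper gives no argument of its own and simply quotes Gozlan--L\'eonard's Proposition~4.3 in the linear case $\alpha(t)=t/C$; your induction (chain rule for relative entropy under disintegration, plus gluing the conditional couplings $\pi^{x_1}$ onto $\pi_1$ so the second marginal is $\sigma^{\otimes n}$) is the standard Marton proof of that tensorization, where the Jensen step needed for general convex $\alpha$ becomes vacuous. One minor correction: Villani's Corollary~5.22 is stated for continuous costs bounded below, not merely lower semicontinuous ones, but this is harmless because $d_p^p$ is continuous.
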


The following is \cite{GozlanLeonard}*{Theorem~1.7}.

\begin{thm}[Marton's concentration theorem]\label{thm:Marton}
Let $(X,d)$ be a metric space, let $\sigma$ be a Borel probability measure on
$X$, and let $\alpha:[0,\infty)\to[0,\infty)$ be a bijection. Suppose that
\[
 \alpha\bigl(W_1(\nu,\sigma)\bigr)\le H(\nu\mid\sigma)
\]
for every Borel probability measure $\nu$ on $X$.
Put $r_0:=\alpha^{-1}(\log2)$.
Then every Borel set $A\subset X$ with $\sigma(A)\ge1/2$ satisfies
\[
 \sigma\bigl(A^{[r]}\bigr)\ge1-\exp\bigl(-\alpha(r-r_0)\bigr)
 \qquad(r\ge r_0),
 \qquad
 A^{[r]}:=\{x\in X\mid d(x,A)\le r\}.
\]
Equivalently, if $f:X\to\R$ is $1$-Lipschitz and $m_f$ is a median of
$f_*\sigma$, then
\[
 \sigma\bigl(f>m_f+r+r_0\bigr)\le\exp\bigl(-\alpha(r)\bigr)
 \qquad(r\ge0).
\]
\end{thm}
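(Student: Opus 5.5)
We derive Marton's theorem from the transport inequality by conditioning the measure $\sigma$ on sets. Let $A$ be Borel with $\sigma(A)\ge1/2$, and fix $r\ge r_0$. Put $B:=X\setminus A^{[r]}$. If $\sigma(B)=0$ there is nothing to prove, so assume $\sigma(B)>0$. Consider the conditional measures
\[
 \sigma_A:=\frac{\sigma(\cdot\cap A)}{\sigma(A)},
 \qquad
 \sigma_B:=\frac{\sigma(\cdot\cap B)}{\sigma(B)}.
\]
Their relative entropies are explicit:
\[
 H(\sigma_A\mid\sigma)=\log\frac1{\sigma(A)}\le\log2,
 \qquad
 H(\sigma_B\mid\sigma)=\log\frac1{\sigma(B)}.
\]
By the hypothesis,
\[
 W_1(\sigma_A,\sigma)\le\alpha^{-1}(\log2)=r_0,
 \qquad
 W_1(\sigma_B,\sigma)\le\alpha^{-1}\bigl(\log(1/\sigma(B))\bigr).
\]
These bounds use that $\alpha$ is a bijection of $[0,\infty)$. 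We also need $\alpha$ to be nondecreasing, which follows for a continuous bijection with $\alpha(0)=0$. I would state explicitly that we take $\alpha$ increasing, as in the application $\alpha(t)=t/C$. For the equivalence a monotone $\alpha$ is needed anyway.

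Next we bound $W_1(\sigma_A,\sigma_B)$ from below geometrically. Every coupling of $\sigma_A$ and $\sigma_B$ is concentrated on $A\times B$, where $d(x,y)>r$ by the definition of $B$. Hence $W_1(\sigma_A,\sigma_B)\ge r$. The triangle inequality for $W_1$ then gives
\[
 r\le r_0+\alpha^{-1}\bigl(\log(1/\sigma(B))\bigr).
\]
Applying the increasing map $\alpha$ yields $\alpha(r-r_0)\le\log(1/\sigma(B))$, that is,
\[
 \sigma\bigl(X\setminus A^{[r]}\bigr)\le\exp\bigl(-\alpha(r-r_0)\bigr).
\]
The only technical point is the finiteness of $W_1$. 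It follows from the hypothesis once the entropies are finite, and $W_1$ is a metric on the measures at finite distance from $\sigma$, so the triangle inequality is legitimate.

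For the equivalent formulation, let $f$ be $1$-Lipschitz with median $m_f$, and put $A:=\{f\le m_f\}$. Then $\sigma(A)\ge1/2$, and $A^{[s]}\subset\{f\le m_f+s\}$. Taking $s=r+r_0$ gives
\[
 \sigma(f>m_f+r+r_0)\le\exp(-\alpha(r)).
\]
For the converse, apply the function statement to $f:=d(\cdot,A)$, which is $1$-Lipschitz and has median $0$. This recovers the set statement, since $\{f\le r\}=A^{[r]}$.

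The main obstacle is mild. It consists in justifying the monotonicity of $\alpha$ and the finiteness and triangle inequality for $W_1$. The rest is the standard conditioning argument.
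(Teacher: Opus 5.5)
Your argument is Marton's conditioning argument, which is the proof behind the result the paper cites from Gozlan--L\'eonard; the paper gives no proof of its own. It is correct when $\alpha$ is increasing, and this covers the only use in the paper, $\alpha(t)=t^p/C$ in Lemma~\ref{lem:Tp-obs}.

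As a proof of the statement as written, however, it has a gap. The hypothesis is only that $\alpha$ is a bijection, and you impose monotonicity instead of proving the result without it. Continuity is not assumed either, so your remark about continuous bijections with $\alpha(0)=0$ does not apply. Your two uses of monotonicity are not justified for a non-monotone bijection: passing from $\alpha(W_1(\sigma_A,\sigma))\le\log2$ to $W_1(\sigma_A,\sigma)\le r_0$, and passing from $r-r_0\le W_1(\sigma_B,\sigma)$ to $\alpha(r-r_0)\le\log(1/\sigma(B))$. Your claim that the equivalence ``needs a monotone $\alpha$ anyway'' is also wrong. Neither direction of the equivalence, as you wrote it, uses monotonicity.

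The missing idea that removes the extra hypothesis is interpolation along a segment. For Borel $E$ with $\sigma(E)>0$, put $w_E:=W_1(\sigma_E,\sigma)$ and $\nu_s:=(1-s)\sigma+s\sigma_E$ for $s\in[0,1]$.

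\begin{itemize}
\item Convexity of relative entropy gives $H(\nu_s\mid\sigma)\le s\log(1/\sigma(E))$.
\item Coupling the common part of $\nu_s$ and $\nu_{s'}$ identically gives $W_1(\nu_s,\nu_{s'})\le|s-s'|\,w_E$. Hence $s\mapsto W_1(\nu_s,\sigma)$ is continuous from $0$ to $w_E$.
\item By the intermediate value theorem, every $t\in[0,w_E]$ equals $W_1(\nu_s,\sigma)$ for some $s$, with $s<1$ when $t<w_E$.
\item The hypothesis then yields $\alpha(t)\le\log(1/\sigma(E))$ for all $t\in[0,w_E]$. If moreover $\sigma(E)\ge1/2$, it yields $\alpha(t)\le s\log2<\log2$ for all $t\in[0,w_E)$.
\end{itemize}

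Taking $E=A$ shows that $\alpha<\log2$ on $[0,w_A)$, hence $r_0=\alpha^{-1}(\log2)\ge w_A$. Your geometric bound and the triangle inequality give $r\le w_A+w_B\le r_0+w_B$, so $r-r_0\in[0,w_B]$. Taking $E=B$ then gives $\alpha(r-r_0)\le\log(1/\sigma(B))$, which is the claim. This is exactly your argument with the two monotone inversions replaced by the interpolation.
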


We prove the following.

\begin{lem}\label{lem:Tp-obs}
If $(X,d,\sigma)$ satisfies $T_p(C)$ for a constant $C>0$, then for every $0<\kappa<1$,
\[
 \ObsDiam(X,d,\sigma;-\kappa)
 \le 2C^{1/p}\left((\log2)^{1/p}+\left(\log\frac2\kappa\right)^{1/p}\right).
\]
The same bound holds for all product powers with the $\ell^p$ product metric.
\end{lem}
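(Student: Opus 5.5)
The plan is to derive the bound from Marton's concentration theorem (Theorem~\ref{thm:Marton}) applied with a power-type deviation function, and then to convert the resulting two-sided tail bound into a bound on partial diameter.

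\emph{Step 1: from $T_p$ to a $W_1$ transport inequality.} Since $p\ge1$, Jensen's (or H\"older's) inequality gives $W_1\le W_p$. Hence $T_p(C)$ yields
\[
 W_1(\nu,\sigma)^p\le C\,H(\nu\mid\sigma)
\]
for every $\nu$. So the hypothesis of Theorem~\ref{thm:Marton} holds with the bijection $\alpha(t):=t^p/C$. Then
\[
 r_0=\alpha^{-1}(\log2)=(C\log2)^{1/p}.
\]

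\emph{Step 2: tail bound for a $1$-Lipschitz function.} Let $f$ be $1$-Lipschitz and let $m_f$ be a median of $f_*\sigma$. The functional form of Theorem~\ref{thm:Marton} gives
\[
 \sigma(f>m_f+r+r_0)\le e^{-r^p/C}.
\]
Applying it to $-f$, which is also $1$-Lipschitz and has median $-m_f$, gives the same bound for $\sigma(f<m_f-r-r_0)$. Choose $r:=(C\log(2/\kappa))^{1/p}$, so that $e^{-r^p/C}=\kappa/2$. Then the closed interval $[m_f-r-r_0,\,m_f+r+r_0]$ has $f_*\sigma$-mass at least $1-\kappa$.

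\emph{Step 3: conclusion.} The diameter of that interval is
\[
 2(r+r_0)=2C^{1/p}\Bigl((\log2)^{1/p}+\bigl(\log\tfrac2\kappa\bigr)^{1/p}\Bigr).
\]
So $\PartDiam(f_*\sigma;1-\kappa)$ is at most this quantity. Taking the supremum over $f$ gives the claimed bound on $\ObsDiam(X,d,\sigma;-\kappa)$.

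For the product powers, Proposition~\ref{prop:tensor-Tp} shows that $\sigma^{\otimes n}$ satisfies $T_p(C)$ on $(X^n,d_p)$ with the same constant $C$. The argument above then applies verbatim with $(X,d,\sigma)$ replaced by $(X^n,d_p,\sigma^{\otimes n})$, and the bound is independent of $n$.

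The only delicate point is the equality case $\kappa=\ldots$ at the endpoints. The tail bound in Step 2 uses strict inequalities, and the interval in Step 2 is closed, so its complement has mass at most $\kappa/2+\kappa/2=\kappa$ with no boundary issue. I do not expect any real obstacle beyond checking that $W_1\le W_p$ legitimately reduces $T_p$ to the $W_1$ form required by Theorem~\ref{thm:Marton}.
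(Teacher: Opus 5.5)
Your proposal is correct and follows the paper's proof: both use $W_1\le W_p$ to feed $T_p(C)$ into Marton's theorem with $\alpha(t)=t^p/C$, take the radius $C^{1/p}\bigl((\log 2)^{1/p}+(\log\frac{2}{\kappa})^{1/p}\bigr)$ around a median, and get the product case from tensorization. The only cosmetic difference is that you apply the functional form of Marton's theorem to $f$ and $-f$, while the paper applies the set form to $\{f\le m\}$ and $\{f\ge m\}$.
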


\begin{proof}
Let $\alpha(t):=t^p/C$.
Since $W_1\le W_p$, the inequality $T_p(C)$ implies
$\alpha(W_1(\nu,\sigma))\le H(\nu\mid\sigma)$
for every probability measure $\nu$ on $X$.
We have $\alpha^{-1}(u)=C^{1/p}u^{1/p}$.
For every Borel set $A\subset X$ with $\sigma(A)\ge1/2$,
Theorem~\ref{thm:Marton} and the fact that the right-hand side below is zero
for $r<C^{1/p}(\log2)^{1/p}$ give
\begin{equation}\label{eq:Tp-concentration}
 \sigma\bigl(A^{[r]}\bigr)\ge
 1-\exp\left[-\frac{(r-C^{1/p}(\log2)^{1/p})_+^p}{C}\right].
\end{equation}
Let $f:X\to\R$ be $1$-Lipschitz and let $m$ be a median of
$f_*\sigma$. Then both $\{f\le m\}$ and $\{f\ge m\}$ have
$\sigma$-measure at least $1/2$. Moreover,
\[
 \{f>m+r\}\subset X\setminus\{f\le m\}^{[r]},
 \qquad
 \{f<m-r\}\subset X\setminus\{f\ge m\}^{[r]}.
\]
Applying \eqref{eq:Tp-concentration} with $A=\{f\le m\}$ and
$A=\{f\ge m\}$ gives
\[
 \sigma\bigl(f>m+r\bigr),\quad \sigma\bigl(f<m-r\bigr)
 \le
 \exp\left[-\left(\frac r{C^{1/p}}-(\log2)^{1/p}\right)_+^p\right].
\]
Taking
\[
 r=C^{1/p}\left((\log2)^{1/p}+\left(\log\frac2\kappa\right)^{1/p}\right)
\]
makes each tail at most $\kappa/2$. Hence
$\sigma(m-r\le f\le m+r)\ge1-\kappa$, and therefore
$\PartDiam(f_*\sigma;1-\kappa)\le2r$. Taking the supremum over all
$1$-Lipschitz $f$ proves the required bound of the observable diameter.

By Proposition~\ref{prop:tensor-Tp}, every product power satisfies the same
$T_p(C)$ inequality for the $\ell^p$ product metric. Hence the
observable-diameter bound holds for all product powers.
\end{proof}

\subsection{Mass-shift estimate} \label{ssec:Mass-shift}
Recall that $d\mu_{\mathbf r}:=\rho_{\mathbf r}\,d\cL^1$, where $\rho_{\mathbf r}$ is the two-sided regularization in Definition~\ref{defn:two-reg}.
Let
\[
 \lambda:=\mu_{\mathbf r}([0,a])=\mu([0,a])\in(0,1),
\]
and put
\[
 \mu_{\mathbf r,-}:=\lambda^{-1}\mu_{\mathbf r}|_{[0,a]},
 \qquad
 \mu_{\mathbf r,+}:=(1-\lambda)^{-1}\mu_{\mathbf r}|_{[a,1]}.
\]
For $\omega\in\{-,+\}$, set
\[
 m_\omega:=r_\omega^{\alpha_\omega+1},
 \qquad
 A_\omega:=\max\{1,r_\omega^{p-1-\alpha_\omega}\},
\]
and define
\[
 A_{\mathbf r}:=\max\{A_-,A_+\}
 =\max\{1,r_-^{p-1-\alpha_-},r_+^{p-1-\alpha_+}\}.
\]

For a Borel probability measure $\eta$ on $\R$, write
\[
 F_\eta(x):=\eta((-\infty,x]),
 \qquad
 F_\eta^{-1}(t):=\inf\{x\in\R\mid F_\eta(x)\ge t\}
 \quad(0<t<1).
\]

The following is proved in \cite{GozlanLine}*{Theorem~1.2}.

\begin{thm}[Monotone optimal transport]\label{thm:monotone}
Let $\sigma,\nu$ be Borel probability measures on $\R$, assume that $\sigma$ has no atoms, and let $\beta:[0,\infty)\to[0,\infty)$ be convex with $\beta(0)=0$. Assume that
\[
 \int_{\R^2}\beta(|x-y|)\,d\sigma(x)\,d\nu(y)<\infty.
\]
Then the increasing quantile coupling is optimal:
\[
 \inf_{\pi\in\Pi(\sigma,\nu)}\int_{\R^2}\beta(|x-y|)\,d\pi
 =\int_0^1\beta(|F_\sigma^{-1}(t)-F_\nu^{-1}(t)|)\,dt.
\]
\end{thm}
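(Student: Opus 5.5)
The statement is the monotone-rearrangement theorem for a convex cost on the line; the target is to show that the quantile coupling $\pi_0:=(F_\sigma^{-1},F_\nu^{-1})_*\cL^1|_{(0,1)}$ minimizes $\int\beta(|x-y|)\,d\pi$ over $\Pi(\sigma,\nu)$. First I check that $\pi_0\in\Pi(\sigma,\nu)$: if $U$ is uniform on $(0,1)$, then $F_\eta^{-1}(U)$ has law $\eta$ for every $\eta$. The inequality ``$\le$'' for the infimum is then immediate, and the integrability hypothesis makes the cost of the product coupling $\sigma\otimes\nu$ finite, so the infimum is finite.

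\emph{Key step: $c$-cyclical monotonicity.} Put $c(x,y):=\beta(|x-y|)$. This cost is submodular. For $x<x'$ and $y<y'$ one has
\[
 c(x,y)+c(x',y')\le c(x,y')+c(x',y).
\]
This follows from convexity of $\beta$ on $[0,\infty)$, extended evenly to $\R$; the even extension $\tilde\beta(t):=\beta(|t|)$ is convex because $\beta(0)=0$ and $\beta\ge0$ force $\beta$ to be nondecreasing. The inequality is equivalent to convexity of $\tilde\beta$: the two differences $x'-y$ and $x-y'$ lie on the outside, and $x-y$ and $x'-y'$ lie between them with the same sum, so Karamata/majorization for convex functions applies. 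Given an optimal coupling $\pi^*$, which exists by lower semicontinuity of $c$ and tightness of $\Pi(\sigma,\nu)$, I would show that $\pi^*$ is concentrated on a monotone set $\Gamma$, meaning that $(x,y),(x',y')\in\Gamma$ and $x<x'$ imply $y\le y'$.

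\emph{Proving that $\pi^*$ is concentrated on a monotone set.} The standard argument shows that an optimal plan with finite cost is $c$-cyclically monotone on its support. Here I would avoid general theory and use a direct swapping argument instead. Suppose two small rectangles $[x]\times[y']$ and $[x']\times[y]$, with $x<x'$ and $y<y'$, both carry positive $\pi^*$-mass. Exchanging equal amounts of mass between them keeps the marginals fixed and does not increase the cost, by the submodular inequality. So one can replace $\pi^*$ by a coupling of cost no larger that is supported on a monotone set. Alternatively, one can invoke that the support of an optimal plan for a continuous cost is $c$-cyclically monotone, and that $2$-point monotonicity with submodularity gives monotone support.

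\emph{Identifying the monotone coupling.} Here the non-atomic hypothesis on $\sigma$ enters. A coupling concentrated on a monotone set $\Gamma$ with first marginal atomless is uniquely determined: for each $x_0$,
\[
 \pi\bigl((-\infty,x_0]\times(-\infty,y]\bigr)=\min\{F_\sigma(x_0),F_\nu(y)\},
\]
and this is exactly the distribution function of $\pi_0$. Hence $\pi_0$ has cost at most that of $\pi^*$, and in fact equal to it, which gives the stated formula.

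I expect the main obstacle to be the rigorous passage from local improvement to a monotone optimizer when $\beta$ is merely convex rather than strictly convex. Equality cases then allow non-monotone optimizers. I would handle this by approximation: apply the argument to the strictly convex costs $\beta_\epsilon(t):=\beta(t)+\epsilon t^2$ on truncated compactly supported approximations of $\sigma$ and $\nu$. In that setting every optimizer is monotone, since the strict inequality rules out crossings. Then let $\epsilon\to0$ and remove the truncation using monotone/dominated convergence, together with the finiteness of $\int\beta\,d\sigma\,d\nu$ and the stability of quantile functions.
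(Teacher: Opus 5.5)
The paper does not prove this statement. It is imported from Gozlan \cite{GozlanLine}*{Theorem~1.2} and used only with $\beta(t)=t^p$ in Lemma~\ref{lem:mass-shift}.

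Your proposal is the standard self-contained proof of the monotone-rearrangement theorem, and it is correct in outline. Its steps are:
\begin{enumerate}
\item submodularity of $\beta(|x-y|)$;
\item monotone support of optimizers for a strictly convex perturbation;
\item identification of the coupling concentrated on a monotone set via $\pi((-\infty,x_0]\times(-\infty,y])=\min\{F_\sigma(x_0),F_\nu(y)\}$;
\item approximation.
\end{enumerate}
Compared with the citation, it is longer but self-contained. It also shows that the atomless hypothesis plays no role in the quantile form. Your quadrant identity holds for arbitrary marginals: if both $\pi((-\infty,x_0]\times(y,\infty))$ and $\pi((x_0,\infty)\times(-\infty,y])$ were positive, $\Gamma$ would contain a crossing pair. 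So non-atomicity does not ``enter'' there. In Gozlan's formulation it is what lets one write the optimal plan as the map $F_\nu^{-1}\circ F_\sigma$.

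Two steps need tightening.

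\emph{The swap paragraph.} Delete the claim that swapping alone turns an optimizer for a merely convex $\beta$ into one supported on a monotone set. A swap only shows the cost does not increase. For $\beta(t)=t$ with $\sigma,\nu$ uniform on $[0,1]$ and $[1,2]$, every coupling is optimal. The $\epsilon$-perturbation is the real argument.

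\emph{The truncation step.} It must go in the nontrivial direction. Knowing that optimal cost equals quantile cost for the truncated marginals, plus convergence of quantile costs, only reproduces $\inf_{\Pi(\sigma,\nu)}\le\int_0^1\beta(|F_\sigma^{-1}-F_\nu^{-1}|)\,dt$, which you already have. Lower semicontinuity of optimal cost points that way. What you need is to send an arbitrary $\pi\in\Pi(\sigma,\nu)$ to a coupling of the truncated marginals without increasing its cost.

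The clamp $T_k(x):=\max\{-k,\min\{k,x\}\}$ does this. Indeed $(T_k\times T_k)_*\pi\in\Pi((T_k)_*\sigma,(T_k)_*\nu)$, $|T_kx-T_ky|\le|x-y|$, $\beta$ is nondecreasing, and $F^{-1}_{(T_k)_*\eta}=T_k\circ F^{-1}_\eta$. Hence the compactly supported case, after letting $\epsilon\to0$, gives
\[
 \int\beta(|x-y|)\,d\pi
 \ge\int\beta(|T_kx-T_ky|)\,d\pi
 \ge\int_0^1\beta\bigl(|T_k(F_\sigma^{-1}(t))-T_k(F_\nu^{-1}(t))|\bigr)\,dt .
\]
Fatou as $k\to\infty$ then concludes. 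The atoms of the clamped marginals at $\pm k$ are harmless, by the remark above.

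Alternatively, you can skip truncation. If $\beta\not\equiv0$, convexity gives $\beta(t)\ge ct$ for large $t$, so the hypothesis $\int\beta\,d\sigma\,d\nu<\infty$ forces finite first moments. Then perturb by $\epsilon(\sqrt{1+t^2}-1)$, which is strictly convex and integrable against every coupling. This is where that hypothesis is naturally used.
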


\begin{lem}\label{lem:mass-shift}
For every $\theta\in[0,1]$, define
\[
 \nu_{\mathbf r,\theta}
 :=\theta\mu_{\mathbf r,-}+(1-\theta)\mu_{\mathbf r,+}.
\]
Then there exists $C_p>0$ such that
\[
 W_p(\nu_{\mathbf r,\theta},\mu_{\mathbf r})^p
 \le C_pA_{\mathbf r}|\theta-\lambda|^2,
\]
with $C_p$ independent of $r_-,r_+$ and $\theta$ for all sufficiently small
$r_-,r_+$.
\end{lem}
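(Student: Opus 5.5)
We plan to use Theorem~\ref{thm:monotone} with $\beta(t)=t^p$, since $\mu_{\mathbf r}$ has no atoms. Then
\[
 W_p(\nu_{\mathbf r,\theta},\mu_{\mathbf r})^p=\int_0^1|F_{\nu}^{-1}(t)-F_{\mu_{\mathbf r}}^{-1}(t)|^p\,dt,
 \qquad \nu:=\nu_{\mathbf r,\theta}.
\]
By the symmetry $x\mapsto1-x$ (which swaps the sides and $\theta-\lambda\leftrightarrow(1-\theta)-(1-\lambda)$), we may assume $\theta\ge\lambda$. In that case $\nu$ puts more mass on $[0,a]$ than $\mu_{\mathbf r}$ does. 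Its density is $(\theta/\lambda)\rho_{\mathbf r}$ on $[0,a]$ and $((1-\theta)/(1-\lambda))\rho_{\mathbf r}$ on $[a,1]$. If $|\theta-\lambda|$ is bounded below by a fixed constant, the trivial bound $W_p^p\le1\le CA_{\mathbf r}|\theta-\lambda|^2$ suffices. We therefore assume $\varepsilon:=\theta-\lambda$ is small.

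Put $G:=F_{\mu_{\mathbf r}}$ and $T:=F_\nu^{-1}\circ G$, so that $T$ is the monotone map pushing $\mu_{\mathbf r}$ to $\nu$ and $W_p^p=\int|T(x)-x|^p\,d\mu_{\mathbf r}(x)$. For $x\le a$ we have $F_\nu(x)=(\theta/\lambda)G(x)\ge G(x)$, hence $T(x)\le x$. For $x\ge a$ we have $1-F_\nu(x)=\frac{1-\theta}{1-\lambda}(1-G(x))$, hence again $T(x)\le x$. The displacement $D(x):=x-T(x)$ is determined by
\[
 \int_{T(x)}^x d\nu\approx F_\nu(x)-G(x)=\varepsilon\,\phi(x),
\]
where $\phi(x)=G(x)/\lambda$ on $[0,a]$ and $\phi(x)=(1-G(x))/(1-\lambda)$ on $[a,1]$. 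Thus $\phi\le1$, and $\phi(x)$ is comparable to the mass of the tail of $\mu_{\mathbf r}$ beyond $x$ (the left tail for $x\le a$, the right tail for $x\ge a$). Away from $a$, i.e.\ for $|x-a|\ge\delta$, the density is at least $c_\delta$, so $D(x)\lesssim\varepsilon\phi(x)/\rho$. Using $\phi\lesssim$ the tail mass and $\rho\ge c_\delta$ there, this region contributes $\lesssim\varepsilon^p\le\varepsilon^2$, because $p>2$ and $\varepsilon\le1$.

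The main work, and the step we expect to be the obstacle, is the region near $a$. Here the density profile \eqref{eq:rhor} must be used quantitatively. Write $x=a+s$ on the right, where $\phi\asymp1$. The point $T(x)$ is found by moving left through density $\rho_{\mathbf r}(a+u)\asymp\max\{r_+,u\}^{\alpha_+}$ until mass $\asymp\varepsilon$ has been collected. If $\varepsilon\le m_+$, the displacement is $D\lesssim\varepsilon/\rho_{\mathbf r}(a+s)$ whenever this is $\le s$. Otherwise $T(x)$ crosses into the flat zone, or even past $a$ into the left side; there $D\lesssim\max\{r_+,\varepsilon^{1/(\alpha_++1)}\}$ plus the analogous left-side quantity. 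We will therefore split the integral $\int D^p\,d\mu_{\mathbf r}$ into three pieces: points with $s\ge s_\varepsilon$ where the move is local; points in the window $s\lesssim s_\varepsilon$, which carry mass $\asymp s_\varepsilon^{\alpha_++1}$ and move at most $\lesssim s_\varepsilon+(\text{left analogue})$; and the symmetric left-side points. Here $s_\varepsilon$ is the scale at which $\mu([a,a+s_\varepsilon])\asymp\varepsilon$. For the local piece we need
\[
 \varepsilon^p\int_{s_\varepsilon}^\delta \max\{r_+,s\}^{\alpha_+(1-p)}\,ds .
\]
When $\alpha_+>1/(p-1)$ this integral is $\lesssim\varepsilon^p\max\{r_+,s_\varepsilon\}^{1-\alpha_+(p-1)}$. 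For the window piece we get $\lesssim s_\varepsilon^{\alpha_++1+p}$ when $s_\varepsilon\ge r_+$, and $\varepsilon\, r_+^p$ otherwise.

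It remains to show that each piece is $\lesssim A_{\mathbf r}\varepsilon^2$. In the regime $\varepsilon\le m_+$ (so $s_\varepsilon\le r_+$), we use $\varepsilon r_+^p=\varepsilon^2\,r_+^p/\varepsilon$. This requires a sharper count: the flat zone has density $\asymp r_+^{\alpha_+}$, so there $D\lesssim\varepsilon r_+^{-\alpha_+}$ on a set of mass $\lesssim m_+$. The resulting contribution is $\varepsilon^p r_+^{-\alpha_+p}m_+=\varepsilon^2\cdot\varepsilon^{p-2}r_+^{1+\alpha_+-\alpha_+p}\le\varepsilon^2 r_+^{p-1-\alpha_+}\cdot(\varepsilon/m_+)^{p-2}\le\varepsilon^2A_+$. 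In the regime $\varepsilon>m_+$, the same computation with $r_+$ replaced by $s_\varepsilon=\varepsilon^{1/(\alpha_++1)}$ gives $\varepsilon^{(\alpha_++1+p)/(\alpha_++1)}$. This is at most $\varepsilon^2\max\{1,\varepsilon^{(p-1-\alpha_+)/(\alpha_++1)}\}$, which in turn is $\le\varepsilon^2\max\{1,r_+^{p-1-\alpha_+}\}$: when $\alpha_+>p-1$ we use $s_\varepsilon\ge r_+$, and when $\alpha_+\le p-1$ the exponent is $\ge0$ and the factor is $\le1$. Finally, mass that crosses $a$ travels an additional left-side distance, bounded by the same computation with $(\alpha_-,r_-)$, which gives the factor $A_-$. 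Combining these bounds yields $W_p^p\le C_pA_{\mathbf r}\varepsilon^2$. We expect the bookkeeping of the crossing regime to be the delicate part.
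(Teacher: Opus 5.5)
Your proposal is correct and follows essentially the paper's route. You use the monotone coupling of Theorem~\ref{thm:monotone} and split the transported mass into three parts: points whose move is local, with displacement $\lesssim\varepsilon/\rho_{\mathbf r}$; a window of mass $\asymp\varepsilon$ near $a$, where the displacement is at most the scale $\Psi_\omega(C\varepsilon)$; and the crossing mass $\varepsilon$. The closing two-regime arithmetic ($\varepsilon\le m_\omega$ versus $\varepsilon>m_\omega$, compared with $A_\omega$) is exactly the paper's.

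The paper merely works in the quantile variable instead of through the map $T$, using the inward-distance functions $\psi_\pm$ with $\psi_\omega\asymp\Psi_\omega$ and $|\psi_\omega'|\le Cw_\omega$. It treats both signs of $\theta-\lambda$ directly rather than by reflection. It also writes out the cases $\alpha_\omega(p-1)\le1$ of the local integral, which you left implicit; they are harmless because $p>2$.
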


\begin{proof}
Let $F_-$ and $F_+$ be the distribution functions of
$\mu_{\mathbf r,-}$ and $\mu_{\mathbf r,+}$, and define the
\emph{inward-distance functions}
\[
 \psi_-(u):=a-F_-^{-1}(1-u),
 \qquad
 \psi_+(u):=F_+^{-1}(u)-a
 \qquad(0<u<1).
\]
For $\omega\in\{-,+\}$, define
\[
 \Psi_\omega(u):=
 \begin{cases}
 u/r_\omega^{\alpha_\omega},&0<u\le m_\omega,\\
 u^{1/(\alpha_\omega+1)},&m_\omega<u,
 \end{cases}
\]
and
\[
 w_\omega(u):=
 \begin{cases}
 r_\omega^{-\alpha_\omega},&0<u\le m_\omega,\\
 u^{-\alpha_\omega/(\alpha_\omega+1)},&m_\omega<u.
 \end{cases}
\]
We first establish estimates for these functions uniformly in small $\mathbf r$.
For $s>0$, put
\[
 I_-(s):=[a-s,a],
 \qquad
 I_+(s):=[a,a+s].
\]
Put $M_\omega(s):=\mu_{\mathbf r,\omega}(I_\omega(s))$ and
\[
 \Theta_\omega(s):=
 \begin{cases}
 r_\omega^{\alpha_\omega}s,&0<s\le r_\omega,\\
 s^{\alpha_\omega+1},&r_\omega<s\le s_0.
 \end{cases}
\]
The two branches of $\Theta_\omega$ agree at $r_\omega$, where their common
value is $m_\omega$.
By \eqref{eq:rhor}, there exist constants $c_0,C_0>0$, independent of small $\mathbf r$, such that for either sign $\omega\in\{-,+\}$ and all sufficiently small fixed $s_0<\delta$,
\begin{equation} \label{eq:Theta-M}
 c_0\Theta_\omega(s)\le M_\omega(s)\le C_0\Theta_\omega(s)
 \qquad(0<s\le s_0).
\end{equation}

We prove that there exist constants $u_0,c,C>0$, independent of small $\mathbf r$, such that
\begin{equation}\label{eq:psi}
 c\Psi_\omega(u)\le\psi_\omega(u)\le C\Psi_\omega(u)
 \qquad(0<u<u_0).
\end{equation}
Indeed, we have $\Theta_\omega^{-1}(u) = \Psi_\omega(u)$ for $0 < u \le s_0^{\alpha_\omega + 1}$.
The functions $M_\omega$ are absolutely continuous and strictly increasing,
and the definitions of $\psi_\omega$ give
$M_\omega(\psi_\omega(u))=u$. Choose $u_0>0$, independent of small
$\mathbf r$, so that
\[
 u_0<c_0\min_{\omega\in\{-,+\}}s_0^{\alpha_\omega+1}.
\]
\eqref{eq:Theta-M} implies that, for $0<u<u_0$,
\[
 \Theta_\omega^{-1}(u/C_0)
 \le\psi_\omega(u)
 \le\Theta_\omega^{-1}(u/c_0).
\]
For $b\in\{1/C_0,1/c_0\}$ and $0<u<u_0$, the choice of $u_0$ gives
$bu<s_0^{\alpha_\omega+1}$. Put $D:=\max\{b,b^{-1}\}$. If
$0<u\le m_\omega/D$, then
\[
 \Theta_\omega^{-1}(bu)
 =\frac{bu}{r_\omega^{\alpha_\omega}}
 =b\Psi_\omega(u).
\]
If $u\ge Dm_\omega$, then
\[
 \Theta_\omega^{-1}(bu)
 =b^{1/(\alpha_\omega+1)}\Psi_\omega(u).
\]
For $m_\omega/D\le u\le Dm_\omega$, both
$\Theta_\omega^{-1}(bu)$ and $\Psi_\omega(u)$ are comparable to
$r_\omega$. Consequently
$\Theta_\omega^{-1}(bu)\asymp\Psi_\omega(u)$, with constants depending
only on $b$ and the fixed data. This holds for both choices of $b$, and
together with the preceding inverse bounds gives \eqref{eq:psi}.

The lower bounds in \eqref{eq:rhor} and \eqref{eq:away-lower} imply that
$\rho_{\mathbf r}>0$ $\cL^1$-a.e.\ on each side of $a$.  Hence
\[
 M_-'(s)=\frac{\rho_{\mathbf r}(a-s)}{\lambda}>0,
 \qquad
 M_+'(s)=\frac{\rho_{\mathbf r}(a+s)}{1-\lambda}>0
\]
for $\cL^1$-a.e.\ $s$ in the respective domains.  Thus $M_\omega$ is
absolutely continuous with positive derivative a.e.  The inverse of such a
strictly increasing function is absolutely continuous, and the
inverse-function formula gives
\begin{equation}\label{eq:psi-derivative}
 \psi_-'(u)=\frac{\lambda}{\rho_{\mathbf r}(a-\psi_-(u))},
 \qquad
 \psi_+'(u)=\frac{1-\lambda}{\rho_{\mathbf r}(a+\psi_+(u))}.
\end{equation}
These identities hold for $\cL^1$-a.e.\ $u\in(0,1)$.

We now derive the estimate for these derivatives.  For $0<u<u_0$,
\eqref{eq:rhor} gives
\[
 \begin{aligned}
 \frac{1}{\rho_{\mathbf r}(a-\psi_-(u))}
 &\le C\min\left\{r_-^{-\alpha_-},\psi_-(u)^{-\alpha_-}\right\},\\
 \frac{1}{\rho_{\mathbf r}(a+\psi_+(u))}
 &\le C\min\left\{r_+^{-\alpha_+},\psi_+(u)^{-\alpha_+}\right\}.
 \end{aligned}
\]
If $0<u\le m_\omega$, the first term on the corresponding right-hand side is
$w_\omega(u)$.  If $m_\omega<u<u_0$, then \eqref{eq:psi} gives
\[
 \psi_\omega(u)\ge c\Psi_\omega(u)
 =c u^{1/(\alpha_\omega+1)},
\]
and hence the second term is at most
$Cu^{-\alpha_\omega/(\alpha_\omega+1)}=Cw_\omega(u)$.  Combining these
bounds with \eqref{eq:psi-derivative} yields
\begin{equation}\label{eq:psip}
 |\psi_\omega'(u)|\le Cw_\omega(u)
 \quad\text{for $\cL^1$-a.e.\ }u\in(0,u_0).
\end{equation}
Moreover, after decreasing the range of the sufficiently small parameters
$\mathbf r$ if necessary, we may assume that
$m_\omega<u_0/4$ for both signs. Put
\[
 d_0:=
 \min\left\{\delta,\,
 c\min_{\omega\in\{-,+\}}
 \left(\frac{u_0}{4}\right)^{1/(\alpha_\omega+1)}\right\}>0,
\]
where $c$ is the constant in \eqref{eq:psi}. Since $\psi_\omega$ is
increasing, \eqref{eq:psi} gives, for $u\ge u_0/4$,
\[
 \psi_\omega(u)\ge\psi_\omega(u_0/4)
 \ge c\Psi_\omega(u_0/4)\ge d_0.
\]
For sufficiently small $\mathbf r$, we also have $r_\omega<d_0$.
Thus the points $a-\psi_-(u)$ and $a+\psi_+(u)$ lie outside the
flattened intervals and stay at distance at least $d_0$ from $a$.
The lower bounds in \eqref{eq:power} and \eqref{eq:away-lower} therefore
give a constant $c_1>0$, independent of $\mathbf r$, such that
\[
 \rho_{\mathbf r}(a-\psi_-(u))\ge c_1,
 \qquad
 \rho_{\mathbf r}(a+\psi_+(u))\ge c_1
 \qquad(u\in[u_0/4,1)).
\]
Using \eqref{eq:psi-derivative}, we obtain
\begin{equation}\label{eq:psi-away}
 |\psi_\omega'(u)|\le c_1^{-1}
 \quad\text{for $\cL^1$-a.e.\ }u\in[u_0/4,1).
\end{equation}

Put $e:=|\theta-\lambda|$.  Choose
\[
 0<e_0<\frac12\min\{\lambda,1-\lambda\};
\]
we shall impose further upper bounds on $e_0$ below.  First assume
$0<e\le e_0$.  We write the monotone transport cost explicitly.  Let
$Q_\theta$ and $Q_\lambda$ denote the
generalized inverse distribution functions of $\nu_{\mathbf r,\theta}$ and
$\mu_{\mathbf r}$, respectively.  In terms of $\psi_-$ and $\psi_+$,
\[
 Q_\theta(t)=
 \begin{cases}
 a-\psi_-\left(1-\dfrac{t}{\theta}\right),&0<t<\theta,\\[2mm]
 a+\psi_+\left(\dfrac{t-\theta}{1-\theta}\right),&\theta<t<1,
 \end{cases}
\]
and the same formula with $\theta$ replaced by $\lambda$ gives
$Q_\lambda$.  By Theorem~\ref{thm:monotone},
\begin{equation}\label{eq:mass-quantile-cost}
 W_p(\nu_{\mathbf r,\theta},\mu_{\mathbf r})^p
 =\int_0^1|Q_\theta(t)-Q_\lambda(t)|^p\,dt.
\end{equation}

Suppose first that $\theta=\lambda+e$.  We split the integral in
\eqref{eq:mass-quantile-cost} into
\[
 (0,\lambda),\qquad(\lambda,\lambda+e),\qquad
 (\lambda+e,1).
\]
On $(0,\lambda)$ both $Q_\theta(t)$ and $Q_\lambda(t)$ lie to the left of
$a$.  Set $u:=1-t/\lambda$.  Then $dt=-\lambda\,du$, and $u$ decreases
from $1$ to $0$ as $t$ increases from $0$ to $\lambda$.  The inward parameter of
$Q_\lambda$ is $u$, whereas that of $Q_\theta$ is
\[
 1-\frac{t}{\lambda+e}
 =1-\frac{\lambda(1-u)}{\lambda+e}
 =u+\frac{(1-u)e}{\lambda+e}.
\]
Consequently, after reversing the integration limits, the contribution of
the interval $0<t<\lambda$ to \eqref{eq:mass-quantile-cost} is
\[
 \lambda\int_0^1
 \left|
 \psi_-\left(u+\frac{(1-u)e}{\lambda+e}\right)
 -\psi_-(u)
 \right|^p\,du.
\]
On $(\lambda+e,1)$ both $Q_\theta(t)$ and $Q_\lambda(t)$ lie to the right
of $a$.  Set
$v:=(t-\lambda-e)/(1-\lambda-e)$, so
$dt=(1-\lambda-e)\,dv$.  The inward parameter of $Q_\theta$ is $v$, while
that of $Q_\lambda$ is
\[
 \frac{t-\lambda}{1-\lambda}
 =v+\frac{(1-v)e}{1-\lambda}.
\]
Therefore, the contribution of this interval is
\[
 (1-\lambda-e)\int_0^1
 \left|
 \psi_+\left(v+\frac{(1-v)e}{1-\lambda}\right)
 -\psi_+(v)
 \right|^p\,dv.
\]
On the middle interval $(\lambda,\lambda+e)$, $Q_\theta(t)$ lies to the
left of $a$ and $Q_\lambda(t)$ lies to the right of $a$.  Its length is
exactly $e$, and the two inward parameters are
\[
 1-\frac{t}{\lambda+e}
 \le\frac{e}{\lambda+e},
 \qquad
 \frac{t-\lambda}{1-\lambda}
 \le\frac{e}{1-\lambda}.
\]
Hence both are at most $C_\lambda e$ for a constant $C_\lambda>0$
depending only on $\lambda$.

Suppose next that $\theta=\lambda-e$.  We now split the integral into
\[
 (0,\lambda-e),\qquad(\lambda-e,\lambda),\qquad(\lambda,1).
\]
On the first interval put $u:=1-t/(\lambda-e)$.  Then
$dt=-(\lambda-e)\,du$, and reversing the integration limits produces the
absolute Jacobian factor $\lambda-e$.  The inward parameter of
$Q_\theta$ is $u$, and that of $Q_\lambda$ is
\[
 1-\frac{t}{\lambda}
 =u+\frac{(1-u)e}{\lambda}.
\]
On the last interval put $v:=(t-\lambda)/(1-\lambda)$, so
$dt=(1-\lambda)\,dv$. The inward
parameter of $Q_\lambda$ is $v$, and that of $Q_\theta$ is
\[
 \frac{t-\lambda+e}{1-\lambda+e}
 =v+\frac{(1-v)e}{1-\lambda+e}.
\]
The crossing interval $(\lambda-e,\lambda)$ again has length $e$; there
the left inward parameter is at most $e/\lambda$ and the right one at most
$e/(1-\lambda+e)$.

After renaming $v$ as $u$ on the two right-hand intervals, each of the four
intervals on which $Q_\theta(t)$ and $Q_\lambda(t)$ lie on the same side of
$a$ contributes an integral of the form
\[
 J\int_0^1
 \left|\psi_\omega\bigl(u+h(u)\bigr)-\psi_\omega(u)\right|^p\,du,
\]
where the values of $\omega$, $J$, and $h$ are as follows:
\[
\begin{array}{c|c|c|c|c}
 \theta & \text{$t$-interval} & \omega & J & h(u)\\ \hline
 \lambda+e &(0,\lambda)&-&\lambda&
   \dfrac{(1-u)e}{\lambda+e}\\[2mm]
 \lambda+e &(\lambda+e,1)&+&1-\lambda-e&
   \dfrac{(1-u)e}{1-\lambda}\\[2mm]
 \lambda-e &(0,\lambda-e)&-&\lambda-e&
   \dfrac{(1-u)e}{\lambda}\\[2mm]
 \lambda-e &(\lambda,1)&+&1-\lambda&
   \dfrac{(1-u)e}{1-\lambda+e}.
\end{array}
\]
Here $J$ is the absolute Jacobian factor.  In particular, after enlarging
$C_\lambda$ if necessary, we have
\[
 0\le h(u)\le C_\lambda e
\]
in all four cases.  Each of the two remaining intervals, on which
$Q_\theta(t)$ and $Q_\lambda(t)$ lie on opposite sides of $a$, has length
$e$, and both inward parameters on it are at most $C_\lambda e$.  We may
assume that $C_\lambda\ge1$.  After decreasing $e_0$ so that
\[
 C_\lambda e_0<\frac{u_0}{4},
\]
each of the four factors $J$ is bounded above and below by positive
constants depending only on $\lambda$, and all inward parameters on the
two remaining intervals belong to $(0,u_0)$.

Choose $K_0>2C_\lambda$, put $K_1:=K_0+C_\lambda$, and decrease $e_0$
so that $K_1e_0<u_0/2$.  Suppose first that $0<u<K_0e$.  Since
$h(u)\ge0$ and $\psi_\omega$ is increasing,
\[
\begin{aligned}
 0
 &\le \psi_\omega(u+h(u))-\psi_\omega(u)\\
 &\le \psi_\omega(u+h(u))
 \le \psi_\omega(K_1e)
 \le C\Psi_\omega(K_1e),
\end{aligned}
\]
where the last inequality follows from \eqref{eq:psi}.  The factors $J$ are
uniformly bounded, and the interval $0<u<K_0e$ has length $K_0e$.
Consequently, its contribution is at most
$Ce\Psi_\omega(K_1e)^p$.

Suppose next that $K_0e\le u<u_0/2$.  Since
$h(u)\le C_\lambda e$ and $K_0>2C_\lambda$, we have
\[
 h(u)\le\frac{u}{2},
 \qquad
 u+h(u)\le\frac{3u}{2}<\frac{3u_0}{4}<u_0.
\]
The function $w_\omega$ is nonincreasing.  The absolute continuity of
$\psi_\omega$ and \eqref{eq:psip} therefore give
\[
\begin{aligned}
 |\psi_\omega(u+h(u))-\psi_\omega(u)|
 &\le \int_u^{u+h(u)}|\psi_\omega'(s)|\,ds
 \le C\int_u^{u+h(u)}w_\omega(s)\,ds\\
 &\le Ch(u)w_\omega(u)
 \le Ce\,w_\omega(u).
\end{aligned}
\]
If $u\ge u_0/2$, then both inward parameters belong to $(0,1)$ and
\eqref{eq:psi-away} gives
\[
 |\psi_\omega(u+h(u))-\psi_\omega(u)|
 \le \int_u^{u+h(u)}|\psi_\omega'(s)|\,ds
 \le c_1^{-1}h(u)
 \le Ce.
\]

On either interval where $Q_\theta(t)$ and $Q_\lambda(t)$ lie on opposite
sides of $a$, write $u_-$ and $u_+$ for their respective inward
parameters.  As shown above,
\[
 0<u_-,u_+\le C_\lambda e\le K_1e<u_0.
\]
The distance between the two points is therefore
\[
\begin{aligned}
 |Q_\theta(t)-Q_\lambda(t)|
 &=\psi_-(u_-)+\psi_+(u_+)\\
 &\le C\bigl(\Psi_-(u_-)+\Psi_+(u_+)\bigr)\\
 &\le C\bigl(\Psi_-(K_1e)+\Psi_+(K_1e)\bigr),
\end{aligned}
\]
where we used \eqref{eq:psi} and the monotonicity of $\Psi_-$ and
$\Psi_+$.  Since this interval has length $e$,
$(x+y)^p\le2^{p-1}(x^p+y^p)$ shows that its contribution is at most
\[
 C\sum_{\omega\in\{-,+\}}e\Psi_\omega(K_1e)^p.
\]

It remains to combine the three ranges considered above.  The portions
$0<u<K_0e$ of the two relevant intervals and the interval on which the
two points lie on opposite sides of $a$ contribute at most
\[
 C\sum_{\omega\in\{-,+\}}e\Psi_\omega(K_1e)^p.
\]
The portions $K_0e\le u<u_0/2$ contribute at most
\[
 C\sum_{\omega\in\{-,+\}}e^p
 \int_{K_0e}^{u_0/2}w_\omega(u)^p\,du,
\]
and the portions $u\ge u_0/2$ contribute at most $Ce^p$.  Put
$c_2:=2^{-1}\min\{1,K_0\}>0$.
Since $w_\omega^p$ is nonnegative,
\[
 \int_{K_0e}^{u_0/2}w_\omega(u)^p\,du
 \le\int_{c_2e}^{u_0}w_\omega(u)^p\,du.
\]
Combining these estimates yields
\begin{equation}\label{eq:mass-master}
 W_p(\nu_{\mathbf r,\theta},\mu_{\mathbf r})^p
 \le C\sum_{\omega\in\{-,+\}}
 \left[
 e\Psi_\omega(K_1e)^p
 +e^p\left(1+\int_{c_2e}^{u_0}w_\omega(u)^p\,du\right)
 \right].
\end{equation}

It remains to estimate each summand separately.  Fix one sign and abbreviate
$\alpha:=\alpha_\omega$, $r:=r_\omega$,
$m:=r^{\alpha+1}$, $\Psi:=\Psi_\omega$, $w:=w_\omega$, and
$A:=A_\omega$.  Suppose first that $0<e\le m$.  If $K_1e\le m$, then
$\Psi(K_1e)=K_1e r^{-\alpha}$.  If $K_1e>m$, then
\[
 \Psi(K_1e)=(K_1e)^{1/(\alpha+1)}
 \le K_1e r^{-\alpha},
\]
where the inequality follows from $K_1e>m=r^{\alpha+1}$.  Thus, in both
cases, $\Psi(K_1e)\le Ce r^{-\alpha}$.  Therefore
\[
 e\Psi(K_1e)^p
 \le Ce^{p+1}r^{-\alpha p}
 \le CAe^2,
\]
because $e^{p-1}r^{-\alpha p}
 \le m^{p-1}r^{-\alpha p}=r^{p-1-\alpha}\le A$.
Put $\gamma:=\alpha p/(\alpha+1)$.  Since $0<c_2<1$ and $e\le m$,
\[
 \int_{c_2e}^{u_0}w(u)^p\,du
 \le \int_{c_2e}^{m}r^{-\alpha p}\,du
     +\int_m^{u_0}u^{-\gamma}\,du
 \le r^{\alpha+1-\alpha p}
     +\int_m^{u_0}u^{-\gamma}\,du.
\]
If $\gamma<1$, the last integral is bounded; if $\gamma=1$, it is
$O(1+\log(1/r))$; and if $\gamma>1$, it is
$O(m^{1-\gamma})=O(r^{\alpha+1-\alpha p})$.  Since $p>2$,
\[
 e^{p-2}r^{\alpha+1-\alpha p}
 \le m^{p-2}r^{\alpha+1-\alpha p}
 =r^{p-1-\alpha}\le A.
\]
If $\gamma<1$, the bounded terms satisfy
$e^{p-2}\le1\le A$.  If $\gamma=1$, then
\[
 e^{p-2}\log(1/r)
 \le m^{p-2}\log(1/r),
\]
and the right-hand side is bounded because $p>2$.  Combining the estimates
obtained above in the three cases $\gamma<1$, $\gamma=1$, and $\gamma>1$,
we obtain
\[
 e^p\left(1+\int_{c_2e}^{u_0}w(u)^p\,du\right)\le CAe^2.
\]

Now suppose $m\le e\le e_0$.  Since $K_1>1$, we have $K_1e\ge m$ and
therefore
\[
 \Psi(K_1e)=(K_1e)^{1/(\alpha+1)}
 \le Ce^{1/(\alpha+1)}.
\]
Moreover, if $u\le m$, then
$w(u)=r^{-\alpha}\le u^{-\alpha/(\alpha+1)}$, while for $u>m$ we have
$w(u)=u^{-\alpha/(\alpha+1)}$ by definition.  Thus
$w(u)\le u^{-\alpha/(\alpha+1)}$ for every $u>0$.  Hence
\[
 e\Psi(K_1e)^p\le Ce^{1+p/(\alpha+1)}
\]
and
\[
 e^p\int_{c_2e}^{u_0}w(u)^p\,du
 \le Ce^p\int_{c_2e}^{u_0}u^{-\gamma}\,du,
 \qquad
 \gamma:=\frac{\alpha p}{\alpha+1}.
\]
If $\gamma<1$, the last integral is bounded, so this term is $O(e^p)$.
If $\gamma=1$, it is $O(e^p(1+\log(1/e)))$.  Since $p>2$ and
$0<e\le e_0<1$, both are $O(e^2)$.  If $\gamma>1$, then
\[
 e^p\int_{c_2e}^{u_0}u^{-\gamma}\,du
 \le Ce^{p+1-\gamma}
 =Ce^{1+p/(\alpha+1)}.
\]
Consequently the corresponding summand in \eqref{eq:mass-master} is at most
\[
 C\left(e^2+e^{1+p/(\alpha+1)}\right).
\]
If $\alpha\le p-1$, then $1+p/(\alpha+1)\ge2$ and $A=1$, so this is
$O(Ae^2)$.  If $\alpha>p-1$, put
$\beta:=p/(\alpha+1)-1<0$.  Since $e\ge m$,
\[
 e^{1+p/(\alpha+1)}
 =e^2e^\beta
 \le e^2m^\beta
 =r^{p-1-\alpha}e^2
 =Ae^2.
\]
Thus every summand in \eqref{eq:mass-master} is bounded by $CA_\omega e^2$,
and therefore
\[
 W_p(\nu_{\mathbf r,\theta},\mu_{\mathbf r})^p
 \le C(A_-+A_+)e^2
 \le CA_{\mathbf r}e^2
\]
for $0<e\le e_0$.

If $e\ge e_0$, both measures are supported on $[0,1]$, so
$W_p^p\le1\le e_0^{-2}A_{\mathbf r}e^2$.  The case $e=0$ is trivial.
This proves the lemma.
\end{proof}

\subsection{Regularized \texorpdfstring{$T_p$}{Tp} inequalities and upper bounds}

A probability measure $\sigma$ on an interval $I$ is said to satisfy a
\emph{Poincar\'e inequality} if there exists $C_{\mathrm P}<\infty$ such that
\[
 \Var_\sigma(f)
 \le C_{\mathrm P}\int_I |f'(x)|^2\,d\sigma(x)
\]
for every locally Lipschitz function $f:I\to\R$. The smallest possible value
of $C_{\mathrm P}$ is called the \emph{Poincar\'e constant} of $\sigma$, and
$1/C_{\mathrm{P}}$ is called the \emph{spectral gap}.

The following theorem is proved in \cite{Muckenhoupt}.
See also \cite{GozlanLine}*{(1.8)}.

\begin{thm}[Muckenhoupt criterion]\label{thm:Muck}
Let $d\sigma:=w(x)\,dx$ be a probability measure on an interval $I$, with $w>0$ $\cL^1$-a.e., and let $m$ be a median of $\sigma$. Put
\[
 B_+:=\sup_{x>m}\sigma([x,\sup I))\int_m^x\frac{dt}{w(t)},
\qquad B_-:=\sup_{x<m}\sigma((\inf I,x])\int_x^m\frac{dt}{w(t)}.
\]
Then $\sigma$ satisfies a Poincar\'e inequality iff $B_++B_-<\infty$, and the optimal Poincar\'e constant is comparable to $\max\{B_+,B_-\}$.\end{thm}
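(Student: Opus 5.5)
The plan is to reduce the Poincar\'e inequality to two one-sided weighted Hardy inequalities anchored at the median $m$. Both directions use the one-sided quantities
\[
 h_+(x):=\Bigl(\int_m^x\frac{dt}{w(t)}\Bigr)^{1/2}\quad(x>m),
 \qquad
 S_+(x):=\sigma([x,\sup I)),
\]
so that $B_+=\sup_{x>m}S_+(x)h_+(x)^2$, together with their mirror images $h_-,S_-$ on the left of $m$. Each bound is proved on the right half-line; the left half is symmetric.

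\emph{Sufficiency.} Let $f$ be locally Lipschitz and put $g:=f-f(m)$. Then
\[
 \Var_\sigma(f)\le\int_I g^2\,d\sigma
 =\int_{x>m}g^2\,d\sigma+\int_{x<m}g^2\,d\sigma .
\]
On the right, write $g(x)=\int_m^x g'$ and apply Cauchy--Schwarz with the auxiliary weight $h_+$:
\[
 g(x)^2\le\int_m^x g'(t)^2w(t)h_+(t)\,dt\cdot\int_m^x\frac{dt}{w(t)h_+(t)} .
\]
The last factor equals $2h_+(x)$, because $(h_+^2)'=1/w$. Fubini's theorem then gives
\[
 \int_{x>m}g^2\,d\sigma\le 2\int_m^{\sup I}g'(t)^2w(t)h_+(t)\Bigl(\int_t^{\sup I}h_+\,d\sigma\Bigr)dt .
\]
Next, integrate by parts against $-dS_+$ and use $S_+\le B_+h_+^{-2}$:
\[
 \int_t h_+\,d\sigma=h_+(t)S_+(t)+\int_t S_+\,dh_+\le\frac{B_+}{h_+(t)}+B_+\int_t h_+^{-2}\,dh_+\le\frac{2B_+}{h_+(t)} .
\]
Hence the right half is at most $4B_+\int g'^2\,d\sigma$, and symmetrically the left half is at most $4B_-\int g'^2\,d\sigma$. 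This yields $C_{\mathrm P}\le4(B_++B_-)\le8\max\{B_+,B_-\}$.

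The main technical care lies in the integration by parts. It has to be justified for a general (possibly non-continuous) tail function $S_+$, and the boundary terms at $\sup I$ must be handled. I would do this by first working on $[m,x_0]$ with $x_0<\sup I$ and then letting $x_0\uparrow\sup I$. One also has to handle the case $h_+=\infty$, where the bound is vacuous.

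\emph{Necessity and the lower bound.} Fix $x>m$. To keep the test function locally Lipschitz even if $1/w$ is not locally integrable, replace $w$ by $w_N:=\max\{w,1/N\}$. Define $f_N(y):=0$ for $y\le m$ and $f_N(y):=\int_m^{\min\{y,x\}}dt/w_N(t)$ for $y>m$, and set $H_N:=f_N(x)$. Then $f_N$ is $N$-Lipschitz, and
\[
 \int_I f_N'^2\,d\sigma=\int_m^x\frac{w}{w_N^2}\,dt\le H_N .
\]
Moreover $f_N=0$ on a set of $\sigma$-mass at least $1/2$, and $f_N=H_N$ on $[x,\sup I)$, where the mass is $S_+(x)\le1/2$. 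Write $\Var_\sigma(f_N)=\inf_c\int(f_N-c)^2\,d\sigma$ and split according to whether $c\ge H_N/2$ or $c<H_N/2$. This gives
\[
 \Var_\sigma(f_N)\ge S_+(x)\frac{H_N^2}{4} .
\]
Combining with the energy bound, $S_+(x)H_N\le4C_{\mathrm P}$. Letting $N\to\infty$ by monotone convergence gives $S_+(x)h_+(x)^2\le4C_{\mathrm P}$, hence $B_+\le4C_{\mathrm P}$, and likewise $B_-\le4C_{\mathrm P}$. In particular a finite Poincar\'e constant forces $B_++B_-<\infty$, and together with the first part this shows that $C_{\mathrm P}$ is comparable to $\max\{B_+,B_-\}$.
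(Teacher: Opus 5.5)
The paper does not prove this statement. It quotes it from \cite{Muckenhoupt} (see also \cite{GozlanLine}*{(1.8)}) and uses it only in Lemma~\ref{lem:reg-Tp}, through the sufficiency direction, with bounds on $B^\pm$ that are uniform in $\mathbf r$.

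Your proposal is a correct, self-contained version of the classical argument. Splitting at the median reduces the Poincar\'e inequality to two one-sided weighted Hardy inequalities. You prove these by Cauchy--Schwarz with the weight $h_+=(\int_m^t w^{-1})^{1/2}$, so that $\int_m^x dt/(wh_+)=2h_+(x)$ by the change of variables $u=h_+(t)^2$. Tonelli and the tail bound $\int_{[t,\sup I)}h_+\,d\sigma\le 2B_+/h_+(t)$ finish that direction. Conversely, the truncated potential $f_N$ and the two-level variance bound give $\max\{B_+,B_-\}\le 4C_{\mathrm P}$.

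What your route adds to the citation is explicit universal comparability constants. This is exactly what the application needs, since Lemma~\ref{lem:reg-Tp} uses that the constant relating $C_{\mathrm P}$ to $\max\{B_+,B_-\}$ does not depend on the measure.

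Three small points:
\begin{enumerate}
\item Since $\sigma\ll\cL^1$, the tail $S_+$ is continuous. Writing $h_+(x)=h_+(t)+\int_{(t,x]}dh_+$ and applying Tonelli gives the tail identity with no boundary terms, so your worry about discontinuous $S_+$ and the truncation $x_0\uparrow\sup I$ are unnecessary.
\item You should say explicitly that $B_+<\infty$ forces $\int_m^x dt/w<\infty$ for every $x<\sup I$, because $S_+(x)>0$ when $w>0$ a.e. This is what makes the Cauchy--Schwarz step meaningful. It also shows that the case $h_+=\infty$ occurs only when $B_+=\infty$.
\item The right and left halves are bounded by the energy on disjoint sets, so you actually obtain $C_{\mathrm P}\le 4\max\{B_+,B_-\}$, not merely $4(B_++B_-)$.
\end{enumerate}
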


The following is due to \cite{GozlanLine}*{Theorem~1.1}.
\begin{thm}[Gozlan's one-dimensional criterion]\label{thm:Gozlan}
Let $\sigma$ be a Borel probability measure on $\R$, let
\[
 d\mu_1(x):=\frac12e^{-|x|}\,dx,
\]
and let $T:=F_\sigma^{-1}\circ F_{\mu_1}$. Let $h>0$, and let $\beta:[0,\infty)\to[0,\infty)$ be convex and satisfy $\beta(t)=t^2$ for $0\le t\le h$. We use the \emph{generalized inverse}
\[
 \beta^{-1}(s):=\inf\{t\ge0\mid\beta(t)\ge s\},\qquad s\ge0.
\]
For $b>0$, write $T_\beta(b)$ for the \emph{transport-entropy inequality}
\[
 \inf_{\pi\in\Pi(\nu,\sigma)}
 \int_{\R^2}\beta(b|x-y|)\,d\pi(x,y)
 \le H(\nu\mid\sigma)
 \qquad\text{for every Borel probability measure }\nu.
\]
Then the following are equivalent:
\begin{enumerate}[label=(\roman*),font=\upshape]
\item $T_\beta(b)$ holds for some $b>0$;
\item $\sigma$ satisfies a Poincar\'e inequality and there exists $d>0$ such
that
\[
 |T(x)-T(y)|\le d^{-1}\beta^{-1}(h^2+|x-y|)
 \qquad(x,y\in\R).
\]
\end{enumerate}
Moreover, for fixed $h$, the admissible transport constant $b$ can be
chosen quantitatively in terms of the Poincar\'e constant and the contraction
constant $d$.
\end{thm}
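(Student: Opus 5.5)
The plan is to transfer a transport--entropy inequality from the two-sided exponential measure $\mu_1$ to $\sigma$ through the monotone map $T=F_\sigma^{-1}\circ F_{\mu_1}$, which pushes $\mu_1$ forward to $\sigma$. The reference point is Talagrand's inequality for $\mu_1$: for a universal constant $b_0>0$,
\[
 \inf_{\pi\in\Pi(\nu,\mu_1)}\int\min\{b_0^2|x-y|^2,b_0|x-y|\}\,d\pi\le H(\nu\mid\mu_1)
\]
for every Borel probability measure $\nu$. Relative entropy is preserved under $T$ on the relevant class: if $\nu\ll\sigma$, set $\tilde\nu:=\nu\circ T$ pulled back through the quantile structure, so that $T_*\tilde\nu=\nu$. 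Because $T$ is monotone and $\mu_1$ has no atoms, one can arrange $H(\tilde\nu\mid\mu_1)=H(\nu\mid\sigma)$. Consequently (i) reduces to a pointwise comparison of costs,
\[
 \beta\bigl(b|T(x)-T(y)|\bigr)\le C\min\{|x-y|^2,|x-y|\}+\text{(harmless terms)},
\]
and (ii) is designed to provide exactly this.

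For (ii)$\Rightarrow$(i) I would proceed in three steps. First, the Poincar\'e inequality on the line, via the Muckenhoupt criterion (Theorem~\ref{thm:Muck}) and the Bobkov--Houdr\'e description of the quantile map, gives local Lipschitz control $|T(x)-T(y)|\le C_{\mathrm P}'|x-y|$ whenever $|x-y|\le1$. Second, the contraction hypothesis $|T(x)-T(y)|\le d^{-1}\beta^{-1}(h^2+|x-y|)$ handles large increments. Since $\beta$ is convex with $\beta(t)=t^2$ near $0$, this yields $\beta(d|T(x)-T(y)|)\le h^2+|x-y|$. Third, I would interpolate between these two regimes using convexity, $\beta(\lambda t)\le\lambda\beta(t)$ for $\lambda\le1$, after choosing $b$ small in terms of $C_{\mathrm P}$, $d$ and $h$. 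This gives
\[
 \beta\bigl(b|T(x)-T(y)|\bigr)\le C\min\{|x-y|^2,|x-y|\}.
\]
The additive $h^2$ is absorbed because $|x-y|\gtrsim1$ in the large-increment regime. Pushing Talagrand's inequality for $\mu_1$ forward then gives $T_\beta(b)$, and the quantitative dependence of $b$ on $C_{\mathrm P}$ and $d$ can be read off from these steps.

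For (i)$\Rightarrow$(ii) there are two parts. The Poincar\'e inequality follows by linearization: apply $T_\beta(b)$ to $\nu=(1+\varepsilon f)\sigma$ with $f$ bounded, Lipschitz and of mean zero. Since $\beta(t)=t^2$ for small $t$, the transport cost is at least of order $\varepsilon^2$ times a dual $\dot H^{-1}$ norm, while $H(\nu\mid\sigma)=\tfrac{\varepsilon^2}{2}\int f^2\,d\sigma+o(\varepsilon^2)$. Duality then yields the Poincar\'e inequality, as in Otto--Villani. The contraction estimate comes from concentration. By Marton's argument (Theorem~\ref{thm:Marton}, with $\beta$ in place of $t^p/C$), $T_\beta(b)$ gives a tail bound for half-lines,
\[
 \sigma\bigl((m+t,\infty)\bigr)\le e^{-\beta(b(t-t_0))},
\]
together with its enlargement analogue for every set of mass at least $1/2$. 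Applied to intervals $(-\infty,T(x)]$ and written in quantile coordinates, where $\mu_1$ tails are exactly $e^{-|x|}$, this translates into $\beta(b'|T(x)-T(y)|)\le h^2+|x-y|$.

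I expect this last translation to be the main obstacle. The difficulty is that it must hold for all pairs $x,y$, not only relative to the median. For that I would first show that the enlargement inequality holds from sets of arbitrary mass $e^{-|y|}$, not only mass $1/2$. I would try to obtain this by applying $T_\beta$ to $\nu=\sigma(\cdot\mid A)$, which has $H=\log(1/\sigma(A))$, and then estimate the transport distance from $A$ to the region past $T(x)$ in terms of $\beta^{-1}$ of the entropy gap.
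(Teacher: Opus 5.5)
The paper gives no proof of this theorem; it quotes it from Gozlan. Judged on its own, your argument for (ii)$\Rightarrow$(i) breaks at the first step: a Poincar\'e inequality does \emph{not} give $|T(x)-T(y)|\le C|x-y|$ for $|x-y|\le1$.

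Let $\sigma$ have density $\frac34|t|^{1/2}$ on $[-1,1]$. Since $\int_0^1t^{-1/2}\,dt<\infty$, Theorem~\ref{thm:Muck} gives a Poincar\'e inequality. Since $T$ takes values in $[-1,1]$ and $\beta^{-1}(h^2)=h$, the contraction condition in (ii) holds with $d=h/2$. Yet $F_\sigma(t)=\frac12+\frac12t^{3/2}$ for $0\le t\le1$, so $T(x)=(1-e^{-x})^{2/3}\sim x^{2/3}$ as $x\downarrow0$. Hence $\beta(b|T(x)-T(0)|)=b^2(1-e^{-x})^{4/3}$ for small $x>0$, which is not $O(x^2)$.

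So the pointwise bound $\beta(b|T(x)-T(y)|)\le C\min\{|x-y|^2,|x-y|\}$, to which you reduce (i), fails under (ii) for every $b>0$, however your steps two and three are arranged. It is a sufficient condition strictly stronger than (i). Muckenhoupt's condition controls $T'$ only in a weighted $L^2$ sense, which yields bounded increments on unit scales (already contained in (ii)) but no Lipschitz bound. (Your route does work whenever $T$ is Lipschitz. One can check this holds uniformly in $\mathbf r$ for the side measures in Lemma~\ref{lem:reg-Tp}, but (ii) does not imply it.)

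The quadratic small-scale part has to come from $\sigma$ itself. By the Bobkov--Gentil--Ledoux theorem, Poincar\'e implies a transport--entropy inequality for $\sigma$ with cost $\min\{t^2,t\}$, up to constants. Push Talagrand's inequality forward only for the large-scale convex cost $(\beta-\beta(t_1))_+$. This cost vanishes on the bounded increments that (ii) gives for $|x-y|\le1$ once $b$ is small, and your second step handles $|x-y|\ge1$. Then add the two inequalities. This is legitimate on $\R$ because the quantile coupling of Theorem~\ref{thm:monotone} is optimal for all convex costs simultaneously, and $\beta\le C\bigl(\min\{t^2,t\}+(\beta-\beta(t_1))_+\bigr)$ by convexity.

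Your sketch of the contraction bound in (i)$\Rightarrow$(ii) also needs a different test measure. Conditioning on a single set costs the absolute entropy $\log(1/\sigma(A))$. With Jensen and the triangle inequality for $W_1$, this only gives $T(y)-T(x)\le b^{-1}\bigl[\beta^{-1}(\log\frac1{\sigma(A)})+\beta^{-1}(\log\frac1{\sigma(B)})\bigr]$ for $A=(-\infty,T(x)]$, $B=[T(y),\infty)$. That is enough when $x\le0\le y$, but not for $0<x<y$ with $y-x\ll x$.

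What you need is a perturbation whose entropy and transport cost both carry the factor $p:=\sigma([T(x),\infty))$. For instance, take $\nu:=\sigma|_{(-\infty,T(x))}+\frac pq\,\sigma|_{[T(y),\infty)}$ with $q:=\sigma([T(y),\infty))$. Then $H(\nu\mid\sigma)=p\log(p/q)$, which equals $p(y-x)$ if $\sigma$ has no atoms. Meanwhile the quantile coupling moves the $\sigma$-mass of $[T(x),T(x+1))$, a fixed fraction of $p$, a distance at least $T(y)-T(x+1)$. Dividing by $p$ gives $\beta\bigl(b(T(y)-T(x+1))\bigr)\lesssim y-x$, and an analogous perturbation bounds $T(x+1)-T(x)$.
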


\begin{lem}\label{lem:reg-Tp}
For $p>2$, there exists $C_p>0$ such that $\mu_{\mathbf r}$ satisfies
$T_p(C_pA_{\mathbf r})$, with $C_p$ independent of $r_-,r_+$ for all
sufficiently small $r_-,r_+$, where $A_{\mathbf r}$ is defined in Section~\ref{ssec:Mass-shift}.
\end{lem}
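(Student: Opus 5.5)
The plan is a decomposition argument: split $\mu_{\mathbf r}$ into its two one-sided pieces, prove a uniform $T_p$ inequality for each piece via Theorem~\ref{thm:Gozlan}, and use Lemma~\ref{lem:mass-shift} to pay for moving mass across $a$. Let $\nu\ll\mu_{\mathbf r}$ have finite entropy and put $\theta:=\nu([0,a])$. Write
\[
 \nu=\theta\nu_-+(1-\theta)\nu_+,
\]
where $\nu_\pm$ are the normalized restrictions of $\nu$ to the two sides. Gluing couplings gives
\[
 W_p(\nu,\mu_{\mathbf r})^p\le 2^{p-1}\Bigl(\theta W_p(\nu_-,\mu_{\mathbf r,-})^p+(1-\theta)W_p(\nu_+,\mu_{\mathbf r,+})^p+W_p(\nu_{\mathbf r,\theta},\mu_{\mathbf r})^p\Bigr).
\]
To justify this, take $\theta\pi_-+(1-\theta)\pi_+$ as a coupling of $\nu$ with $\nu_{\mathbf r,\theta}$, and compose it with an optimal coupling of $\nu_{\mathbf r,\theta}$ and $\mu_{\mathbf r}$. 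The relative entropy decomposes exactly (chain rule for the partition $\{[0,a],(a,1]\}$):
\[
 H(\nu\mid\mu_{\mathbf r})=\theta H(\nu_-\mid\mu_{\mathbf r,-})+(1-\theta)H(\nu_+\mid\mu_{\mathbf r,+})+\theta\log\frac\theta\lambda+(1-\theta)\log\frac{1-\theta}{1-\lambda}.
\]
Pinsker's inequality for Bernoulli laws bounds the last two terms below by $2(\theta-\lambda)^2$. Lemma~\ref{lem:mass-shift} then gives $W_p(\nu_{\mathbf r,\theta},\mu_{\mathbf r})^p\le C_pA_{\mathbf r}|\theta-\lambda|^2\le CA_{\mathbf r}\,H(\nu\mid\mu_{\mathbf r})$. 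It therefore remains to show that each one-sided measure $\mu_{\mathbf r,\pm}$ satisfies $T_p(C)$ with $C$ independent of small $\mathbf r$; since $A_{\mathbf r}\ge1$, the three pieces then combine into $T_p(C_pA_{\mathbf r})$.

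For the one-sided inequality, take $\mu_{\mathbf r,+}$ on $[a,1]$; the left side is symmetric. The density is comparable to $r_+^{\alpha_+}$ near $a$, to $s^{\alpha_+}$ at distance $s\in(r_+,\delta)$, and bounded below away from $a$. So the measure has an endpoint degeneracy rather than an interior bottleneck.

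First, the Poincar\'e inequality. I will apply Theorem~\ref{thm:Muck} with the median $m$ at a fixed distance from $a$, uniformly in $\mathbf r$. For $x<m$,
\[
 \mu_{\mathbf r,+}([a,x])\int_x^m\frac{dt}{\rho_{\mathbf r}}\lesssim \Theta_+(x-a)\cdot\bigl(1+\textstyle\int_{x-a}^{\delta}\rho_{\mathbf r}(a+s)^{-1}ds\bigr).
\]
For $x-a=s>r_+$ this is about $s^{\alpha_++1}\cdot s^{1-\alpha_+}=s^2$ when $\alpha_+>1$, which stays bounded. The regime $s\le r_+$ and the cases $\alpha_+\le1$ are similar. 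The side $x>m$ is harmless because the density is bounded below there. Hence the Poincar\'e constant is bounded uniformly in $\mathbf r$.

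Second, the contraction condition in Theorem~\ref{thm:Gozlan}. I take $\beta(t)=t^2$ for $t\le h$ and $\beta(t)\asymp t^p$ beyond, so that $T_\beta(b)$ implies $T_p$ up to constants on the bounded interval, where $t^p\le C t^2$ is not needed. The required bound is
\[
 |T(x)-T(y)|\lesssim\beta^{-1}(h^2+|x-y|)\asymp(1+|x-y|)^{1/p}.
\]
This holds trivially because $T$ takes values in the bounded set $[a,1]$, so $|T(x)-T(y)|\le1$. Theorem~\ref{thm:Gozlan} then yields $T_\beta(b)$ with $b$ depending only on the uniform Poincar\'e bound.

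Since distances are at most $1$, and $\beta(bt)\ge c\,t^p$ for $t\le1$ (from $t^2\ge t^p$ for $t\le 1$ when $b t\le h$, and $\beta(bt)\asymp t^p$ otherwise), this gives $T_p(C)$ uniformly. This step is the one I expect to be most delicate. Specifically, I need the quantitative dependence in the "moreover" clause of Theorem~\ref{thm:Gozlan} to depend only on the Poincar\'e constant and $d$. I also need to check the Muckenhoupt bound carefully in the boundary regime $s\asymp r_+$, and for large $\alpha_+$, where $\int\rho_{\mathbf r}^{-1}$ blows up like $r_+^{1-\alpha_+}$ but is multiplied by mass $\lesssim r_+^{\alpha_++1}$.
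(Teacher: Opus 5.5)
Your proposal is correct and follows the paper's proof essentially step for step. The shared steps are: the split into normalized side measures glued by the triangle inequality; the Bernoulli chain rule for the entropy with $D_{\mathrm{Ber}}(\theta\Vert\lambda)\ge2|\theta-\lambda|^2$; Lemma~\ref{lem:mass-shift} for the cross-bottleneck cost; and uniform one-sided bounds from Theorem~\ref{thm:Muck} (median bounded away from $a$) together with Theorem~\ref{thm:Gozlan} via the trivial contraction bound $|T(x)-T(y)|\le1$.

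The only cosmetic difference is your mixed choice of $\beta$. The paper simply takes $\beta(t)=t^2$, obtains a uniform $T_2$ inequality for each side, and then uses $|x-y|^p\le|x-y|^2$ on the unit-diameter support. This is slightly simpler but amounts to the same thing.
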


\begin{proof}
We first prove uniform Poincar\'e inequalities for the two normalized side
measures.  Put
$\lambda_-:=\lambda$, $\lambda_+:=1-\lambda$,
$L_-:=a$, and $L_+:=1-a$, where $\lambda$ is defined in Section~\ref{ssec:Mass-shift}.  For $\omega\in\{-,+\}$, push
$\mu_{\mathbf r,\omega}$ forward by the inward coordinate $x\mapsto a-x$
when $\omega=-$ and $x\mapsto x-a$ when $\omega=+$, and denote the resulting
probability on $[0,L_\omega]$ by $\sigma_{\mathbf r,\omega}$.  Their densities are
\[
 \widehat w_{\mathbf r,-}(s)=\frac{\rho_{\mathbf r}(a-s)}{\lambda_-},
 \qquad
 \widehat w_{\mathbf r,+}(s)=\frac{\rho_{\mathbf r}(a+s)}{\lambda_+}.
\]
For either sign, \eqref{eq:rhor} gives, uniformly for small $\mathbf r$,
\begin{equation}\label{eq:side-mass-density}
 \begin{aligned}
 \sigma_{\mathbf r,\omega}([0,s])&\asymp
 \begin{cases}
 r_\omega^{\alpha_\omega}s,&0<s\le r_\omega,\\
 s^{\alpha_\omega+1},&r_\omega\le s<s_0,
 \end{cases}\\
 \widehat w_{\mathbf r,\omega}(s)&\asymp
 \begin{cases}
 r_\omega^{\alpha_\omega},&0<s\le r_\omega,\\
 s^{\alpha_\omega},&r_\omega\le s<s_0,
 \end{cases}
 \end{aligned}
\end{equation}
for any fixed sufficiently small $s_0<\delta$, with the density comparison
understood $\cL^1$-a.e.

Choose $s_0$ so small that
$\sigma_{\mathbf r,\omega}([0,s_0])<1/4$ for both signs and all sufficiently
small $\mathbf r$.  For each $\omega\in\{-,+\}$, let
$m_{\mathbf r,\omega}$ be a median of $\sigma_{\mathbf r,\omega}$. Then
$m_{\mathbf r,\omega}\ge s_0$.  Define the corresponding Muckenhoupt
quantities by
\[
 B_{\mathbf r,\omega}^+
 :=\sup_{m_{\mathbf r,\omega}<x<L_\omega}
 \sigma_{\mathbf r,\omega}([x,L_\omega])
 \int_{m_{\mathbf r,\omega}}^x
 \frac{ds}{\widehat w_{\mathbf r,\omega}(s)}
\]
and
\[
 B_{\mathbf r,\omega}^-
 :=\sup_{0<x<m_{\mathbf r,\omega}}
 \sigma_{\mathbf r,\omega}([0,x])
 \int_x^{m_{\mathbf r,\omega}}
 \frac{ds}{\widehat w_{\mathbf r,\omega}(s)}.
\]
By Theorem~\ref{thm:Muck}, it suffices to prove that
$B_{\mathbf r,\omega}^+$ and $B_{\mathbf r,\omega}^-$ are bounded uniformly
in sufficiently small $\mathbf r$ and in $\omega\in\{-,+\}$.
We first dispose of the parts that stay away from the bottleneck.  By
\eqref{eq:power} on $[s_0/2,\delta]$ and by \eqref{eq:away-lower} beyond
$\delta$, there is a constant $c_{s_0}>0$, independent of small
$\mathbf r$, such that
\begin{equation}\label{eq:side-away-density-lower}
 \widehat w_{\mathbf r,\omega}(s)\ge c_{s_0}
 \quad\text{for $\cL^1$-a.e.\ }s\in[s_0/2,L_\omega].
\end{equation}
Hence, if $s_0/2\le x<m_{\mathbf r,\omega}$, since $m_{\mathbf r,\omega} \le L_\omega$,
\[
 \sigma_{\mathbf r,\omega}([0,x])
 \int_x^{m_{\mathbf r,\omega}}
 \frac{ds}{\widehat w_{\mathbf r,\omega}(s)}
 \le \frac{L_\omega}{c_{s_0}},
\]
and, for every $x>m_{\mathbf r,\omega}$,
\[
 \sigma_{\mathbf r,\omega}([x,L_\omega])
 \int_{m_{\mathbf r,\omega}}^x
 \frac{ds}{\widehat w_{\mathbf r,\omega}(s)}
 \le \frac{L_\omega}{c_{s_0}}.
\]
Thus $B_{\mathbf r,\omega}^+$ is uniformly bounded, and the only part of
$B_{\mathbf r,\omega}^-$ that remains to be estimated is the range
$0<x<s_0/2$.  For $0<x\le r_\omega$,
\eqref{eq:side-mass-density} and \eqref{eq:side-away-density-lower} give
\begin{equation}\label{eq:Bminus-small-x}
 \sigma_{\mathbf r,\omega}([0,x])
 \int_x^{m_{\mathbf r,\omega}}
 \frac{ds}{\widehat w_{\mathbf r,\omega}(s)}
 \le
 Cr_\omega^{\alpha_\omega}x
 \left[
 1+\int_x^{r_\omega}r_\omega^{-\alpha_\omega}\,ds
 +\int_{r_\omega}^{s_0}s^{-\alpha_\omega}\,ds
 \right].
\end{equation}
If $r_\omega\le x<s_0/2$, \eqref{eq:side-mass-density} and
\eqref{eq:side-away-density-lower} give
\begin{equation}\label{eq:Bminus-intermediate-x}
 \sigma_{\mathbf r,\omega}([0,x])
 \int_x^{m_{\mathbf r,\omega}}
 \frac{ds}{\widehat w_{\mathbf r,\omega}(s)}
 \le
 Cx^{\alpha_\omega+1}
 \left[1+\int_x^{s_0}s^{-\alpha_\omega}\,ds\right].
\end{equation}
For any $\alpha>0$,
\[
 \int_u^{s_0}s^{-\alpha}\,ds
 =\begin{cases}
 \dfrac{s_0^{1-\alpha}-u^{1-\alpha}}{1-\alpha},&\alpha<1,\\[2mm]
 \log(s_0/u),&\alpha=1,\\[2mm]
 \dfrac{u^{1-\alpha}-s_0^{1-\alpha}}{\alpha-1},&\alpha>1.
 \end{cases}
\]
We now prove that the right-hand sides of \eqref{eq:Bminus-small-x} and
\eqref{eq:Bminus-intermediate-x} are uniformly bounded. If
$0<x\le r_\omega$ and
$\alpha_\omega<1$, the integral
$\int_{r_\omega}^{s_0}s^{-\alpha_\omega}\,ds$ is bounded independently of
$r_\omega$, while
$r_\omega^{\alpha_\omega}x\int_x^{r_\omega}r_\omega^{-\alpha_\omega}\,ds
\le xr_\omega$; hence the right-hand side of
\eqref{eq:Bminus-small-x} is bounded. If
$\alpha_\omega=1$, the integral
$\int_{r_\omega}^{s_0}s^{-\alpha_\omega}\,ds$ equals
$\log(s_0/r_\omega)$, so the term
$Cr_\omega^{\alpha_\omega}x
\int_{r_\omega}^{s_0}s^{-\alpha_\omega}\,ds$ on the right-hand side of
\eqref{eq:Bminus-small-x} is at most
$Cr_\omega x\log(s_0/r_\omega)$, which is uniformly bounded. If
$\alpha_\omega>1$, the integral
$\int_{r_\omega}^{s_0}s^{-\alpha_\omega}\,ds$ is at most
$Cr_\omega^{1-\alpha_\omega}$, and hence the term
$Cr_\omega^{\alpha_\omega}x
\int_{r_\omega}^{s_0}s^{-\alpha_\omega}\,ds$ on the right-hand side of
\eqref{eq:Bminus-small-x} is at most
$Cr_\omega x\le Cr_\omega^2$. Thus the part of
$B_{\mathbf r,\omega}^-$ corresponding to $0<x\le r_\omega$ is uniformly
bounded in all cases.

Now let $r_\omega\le x<s_0/2$. If $\alpha_\omega<1$, the integral
$\int_x^{s_0}s^{-\alpha_\omega}\,ds$ is bounded, so the right-hand side of
\eqref{eq:Bminus-intermediate-x} is $O(x^{\alpha_\omega+1})$. If
$\alpha_\omega=1$, the right-hand side of
\eqref{eq:Bminus-intermediate-x} is
$O(x^2(1+\log(s_0/x)))$, which is bounded on $(0,s_0/2)$. If
$\alpha_\omega>1$, the integral
$\int_x^{s_0}s^{-\alpha_\omega}\,ds$ is
$O(x^{1-\alpha_\omega})$, and the right-hand side of
\eqref{eq:Bminus-intermediate-x} is therefore $O(x^2)$. Hence
the part of $B_{\mathbf r,\omega}^-$ corresponding to
$r_\omega\le x<s_0/2$ is also uniformly bounded.
Theorem~\ref{thm:Muck} gives Poincar\'e constants for
$\sigma_{\mathbf r,-}$ and $\sigma_{\mathbf r,+}$ that are independent of
$\mathbf r$. Since the inward-coordinate maps are isometries and
$\sigma_{\mathbf r,\omega}$ is the push-forward of
$\mu_{\mathbf r,\omega}$, the same uniform Poincar\'e constants apply to
$\mu_{\mathbf r,-}$ and $\mu_{\mathbf r,+}$.

The supports of $\mu_{\mathbf r,-}$ and $\mu_{\mathbf r,+}$ have diameter at
most $1$.  Let $T_{\mathbf r,\omega}:\R\to\R$ be the monotone transport
from the probability measure $\mu_1$ defined in
Theorem~\ref{thm:Gozlan} to $\mu_{\mathbf r,\omega}$.  The image of
$T_{\mathbf r,\omega}$ is contained in
$\operatorname{supp}\mu_{\mathbf r,\omega}$, so
\[
 |T_{\mathbf r,\omega}(x)-T_{\mathbf r,\omega}(y)|\le1
 \qquad(x,y\in\R).
\]
Apply Theorem~\ref{thm:Gozlan} with $\beta(t)=t^2$ and a fixed $h>0$.
Then $\beta^{-1}(h^2+|x-y|)=\sqrt{h^2+|x-y|}\ge h$ and
\[
 |T_{\mathbf r,\omega}(x)-T_{\mathbf r,\omega}(y)|
 \le1\le h^{-1}\sqrt{h^2+|x-y|}.
\]
Together with the uniform Poincar\'e inequalities proved above, the
implication (ii) $\Rightarrow$ (i) and the quantitative
conclusion of Theorem~\ref{thm:Gozlan} now give a constant $a_0>0$, independent of small $\mathbf r$,
 such that $T_\beta(a_0)$
holds for both $\mu_{\mathbf r,-}$ and $\mu_{\mathbf r,+}$. Since
$\beta(t)=t^2$, the definition of $T_\beta(a_0)$ gives, for every Borel
probability measure $\eta$ on $\R$,
\[
 W_2(\eta,\mu_{\mathbf r,\omega})^2
 \le C H(\eta\mid\mu_{\mathbf r,\omega}).
\]
Here $C:=a_0^{-2}$.
If $H(\eta\mid\mu_{\mathbf r,\omega})=+\infty$, the inequality
$W_p(\eta,\mu_{\mathbf r,\omega})^p\le
C H(\eta\mid\mu_{\mathbf r,\omega})$ is automatic. If
$H(\eta\mid\mu_{\mathbf r,\omega})<+\infty$, then
$\eta\ll\mu_{\mathbf r,\omega}$, so both $\eta$ and
$\mu_{\mathbf r,\omega}$ are supported on
$\operatorname{supp}\mu_{\mathbf r,\omega}$, whose diameter is at most $1$.
Every $\pi\in\Pi(\eta,\mu_{\mathbf r,\omega})$ is therefore supported on
pairs with $|x-y|\le1$, and since $p>2$, $|x-y|^p\le|x-y|^2$. Taking the
infimum over $\pi\in\Pi(\eta,\mu_{\mathbf r,\omega})$ gives
\begin{equation}\label{eq:side-Tp}
 W_p(\eta,\mu_{\mathbf r,\omega})^p
 \le W_2(\eta,\mu_{\mathbf r,\omega})^2
 \le C H(\eta\mid\mu_{\mathbf r,\omega}).
\end{equation}

Let now $\nu$ be an arbitrary probability measure on $[0,1]$.  If
$\nu\not\ll\mu_{\mathbf r}$, then
$H(\nu\mid\mu_{\mathbf r})=+\infty$, so the inequality
$W_p(\nu,\mu_{\mathbf r})^p\le
CA_{\mathbf r}H(\nu\mid\mu_{\mathbf r})$ is automatic. Assume therefore that
$\nu\ll\mu_{\mathbf r}$ and put $\theta:=\nu([0,a])$.
Since $\nu\ll\mu_{\mathbf r}\ll\cL^1$, we have $\nu(\{a\})=0$.  Thus
$\theta=\nu([0,a))$ and the restrictions to $[0,a]$ and $[a,1]$ do not
overlap in positive $\nu$-measure.
If $\theta>0$, define $\nu_-:=\theta^{-1}\nu|_{[0,a]}$,
and if $\theta<1$, define $\nu_+:= (1-\theta)^{-1}\nu|_{[a,1]}$.
When $\theta=0$, set $\nu_-:=\mu_{\mathbf r,-}$, and when $\theta=1$, set
$\nu_+:=\mu_{\mathbf r,+}$. Set
\[
 \nu^*:=\theta\mu_{\mathbf r,-}+(1-\theta)\mu_{\mathbf r,+}.
\]
Fix $\varepsilon>0$.  Choose couplings $\pi_\omega$ between $\nu_\omega$ and $\mu_{\mathbf r,\omega}$, $\omega \in \{-,+\}$, such that
\[
 \int_{[0,a]^2}|x-y|^p\,d\pi_-
 \le W_p(\nu_-,\mu_{\mathbf r,-})^p+\varepsilon,
\]
and
\[
 \int_{[a,1]^2}|x-y|^p\,d\pi_+
 \le W_p(\nu_+,\mu_{\mathbf r,+})^p+\varepsilon.
\]
The convex combination $\theta\pi_-+(1-\theta)\pi_+$ is a coupling of $\nu$ and $\nu^*$, and after
letting $\varepsilon\downarrow0$ we obtain
\[
 W_p(\nu,\nu^*)^p
 \le\theta W_p(\nu_-,\mu_{\mathbf r,-})^p
 +(1-\theta)W_p(\nu_+,\mu_{\mathbf r,+})^p.
\]
Using \eqref{eq:side-Tp} gives
\begin{equation}\label{eq:side-transport-entropy}
 W_p(\nu,\nu^*)^p
 \le C\left[
 \theta H(\nu_-\mid\mu_{\mathbf r,-})
 +(1-\theta)H(\nu_+\mid\mu_{\mathbf r,+})
 \right].
\end{equation}
Define
\[
 D_{\mathrm{Ber}}(\theta\Vert\lambda)
 :=\theta\log\frac\theta\lambda
 +(1-\theta)\log\frac{1-\theta}{1-\lambda},
\]
with endpoint values defined by continuity.  There exists $c_\lambda>0$ such
that
\begin{equation}\label{eq:Ber-quadratic}
 D_{\mathrm{Ber}}(\theta\Vert\lambda)
 \ge c_\lambda|\theta-\lambda|^2
 \qquad(0\le\theta\le1).
\end{equation}
Indeed,
\[
 \frac{d^2}{d\theta^2}D_{\mathrm{Ber}}(\theta\Vert\lambda)
 =\frac1{\theta(1-\theta)}\ge4
 \qquad(0<\theta<1).
\]
Since
$D_{\mathrm{Ber}}(\lambda\Vert\lambda)
=D_{\mathrm{Ber}}'(\lambda\Vert\lambda)=0$, Taylor's theorem at
$\theta=\lambda$ gives
$D_{\mathrm{Ber}}(\theta\Vert\lambda)\ge2|\theta-\lambda|^2$ for
$0<\theta<1$. The same inequality at $\theta=0,1$ follows by continuity,
and hence \eqref{eq:Ber-quadratic} holds with $c_\lambda=2$.

Lemma~\ref{lem:mass-shift} and \eqref{eq:Ber-quadratic} therefore give
\begin{equation}\label{eq:mass-entropy}
 W_p(\nu^*,\mu_{\mathbf r})^p
 \le CA_{\mathbf r}D_{\mathrm{Ber}}(\theta\Vert\lambda).
\end{equation}
We compute the entropy decomposition directly.  On $[0,a]$, when
$\theta>0$,
\[
 \frac{d\nu}{d\mu_{\mathbf r}}
 =\frac\theta\lambda
 \frac{d\nu_-}{d\mu_{\mathbf r,-}}
 \qquad \mu_{\mathbf r}\text{-a.e.}
\]
Therefore
\begin{equation}\label{eq:entropy-left}
 \int_{[0,a]}\log\frac{d\nu}{d\mu_{\mathbf r}}\,d\nu
 =\theta\log\frac\theta\lambda
 +\theta H(\nu_-\mid\mu_{\mathbf r,-}).
\end{equation}
If $\theta=0$, both $\theta\log(\theta/\lambda)$ and
$\theta H(\nu_-\mid\mu_{\mathbf r,-})$ are interpreted as $0$. On $[a,1]$,
when $1-\theta>0$,
\[
 \frac{d\nu}{d\mu_{\mathbf r}}
 =\frac{1-\theta}{1-\lambda}
 \frac{d\nu_+}{d\mu_{\mathbf r,+}}
 \qquad \mu_{\mathbf r}\text{-a.e.},
\]
and hence
\begin{equation}\label{eq:entropy-right}
 \int_{[a,1]}\log\frac{d\nu}{d\mu_{\mathbf r}}\,d\nu
 =(1-\theta)\log\frac{1-\theta}{1-\lambda}
 +(1-\theta)H(\nu_+\mid\mu_{\mathbf r,+}).
\end{equation}
At $\theta=1$, both
$(1-\theta)\log((1-\theta)/(1-\lambda))$ and
$(1-\theta)H(\nu_+\mid\mu_{\mathbf r,+})$ are interpreted as $0$. Adding
\eqref{eq:entropy-left} and \eqref{eq:entropy-right} gives
\begin{equation}\label{eq:entropy-decomp}
 H(\nu\mid\mu_{\mathbf r})
 =D_{\mathrm{Ber}}(\theta\Vert\lambda)
 +\theta H(\nu_-\mid\mu_{\mathbf r,-})
 +(1-\theta)H(\nu_+\mid\mu_{\mathbf r,+}).
\end{equation}

Finally the triangle inequality and $(x+y)^p\le2^{p-1}(x^p+y^p)$ give,
using \eqref{eq:side-transport-entropy}, \eqref{eq:mass-entropy},
$A_{\mathbf r}\ge1$, and \eqref{eq:entropy-decomp},
\[
 W_p(\nu,\mu_{\mathbf r})^p
 \le CA_{\mathbf r}H(\nu\mid\mu_{\mathbf r}).
\]
Thus $\mu_{\mathbf r}$ satisfies $T_p(CA_{\mathbf r})$.
\end{proof}

\begin{prop}\label{prop:upper-gt2}
For $p>2$ and every fixed $0<\kappa<1$, as $n\to\infty$,
\[
 \ObsDiam(X_{n,p};-\kappa)
 \lesssim
 \begin{cases}
 1,&\alpha_*\le p-1,\\
 n^{\frac1{2p}-\frac1{2(\alpha_*+1)}},&\alpha_*>p-1.
 \end{cases}
\]
\end{prop}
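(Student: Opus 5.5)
The plan is to combine Lemma~\ref{lem:reg-Tp}, Lemma~\ref{lem:Tp-obs}, Lemma~\ref{lem:Wp-comp} and the Wasserstein estimate of Lemma~\ref{lem:Jr}, with regularization scales chosen in terms of $n$.

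Fix $\eta\in(0,\kappa)$, say $\eta=\kappa/2$. For sufficiently small $\mathbf r$, Lemma~\ref{lem:reg-Tp} shows that $\mu_{\mathbf r}$ satisfies $T_p(C_pA_{\mathbf r})$. By Proposition~\ref{prop:tensor-Tp}, so does $\mu_{\mathbf r}^{\otimes n}$ on $([0,1]^n,d_p)$. Lemma~\ref{lem:Tp-obs} then gives
\[
 \ObsDiam([0,1]^n,d_p,\mu_{\mathbf r}^{\otimes n};-(\kappa-\eta))\le C A_{\mathbf r}^{1/p}.
\]
Combining this with Lemma~\ref{lem:Wp-comp} and the product estimate in Lemma~\ref{lem:Jr}, we get, uniformly in small $\mathbf r$,
\[
 \ObsDiam(X_{n,p};-\kappa)\le C\Bigl[\max\{1,r_-^{p-1-\alpha_-},r_+^{p-1-\alpha_+}\}^{1/p}
 +n^{1/p}\bigl(r_-^{\alpha_-+p+1}+r_+^{\alpha_++p+1}\bigr)^{1/p}\Bigr].
\]
We then choose each $r_\omega=r_{\omega,n}$ separately, according to the side exponent, and split into cases.

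\emph{Case $\alpha_\omega\le p-1$.} The side term $r_\omega^{p-1-\alpha_\omega}$ is at most $1$ for every $r_\omega<1$. We may therefore take $r_\omega:=n^{-1/(\alpha_\omega+p+1)}$, which gives $n^{1/p}r_\omega^{(\alpha_\omega+p+1)/p}=1$. Any choice $r_\omega\lesssim n^{-1/(\alpha_\omega+p+1)}$ works, and tends to $0$ as required.

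\emph{Case $\alpha_\omega>p-1$.} We balance $r_\omega^{(p-1-\alpha_\omega)/p}$ against $n^{1/p}r_\omega^{(\alpha_\omega+p+1)/p}$. This forces $r_\omega^{2\alpha_\omega+2}=n^{-1}$, that is, $r_\omega:=n^{-1/(2(\alpha_\omega+1))}$, matching the choice made for $p\le2$. Both terms then equal
\[
 r_\omega^{(p-1-\alpha_\omega)/p}=n^{\frac{\alpha_\omega+1-p}{2p(\alpha_\omega+1)}}=n^{\frac1{2p}-\frac1{2(\alpha_\omega+1)}}=n^{\beta_\omega},
\]
and this is at least $1$.

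Taking the maximum over the two sides, the bound becomes $C\max\{1,n^{\beta_-},n^{\beta_+}\}$, where a $\beta_\omega$ is included only when $\alpha_\omega>p-1$. The map $\alpha\mapsto \frac1{2p}-\frac1{2(\alpha+1)}$ is increasing and vanishes at $\alpha=p-1$. Hence this maximum is $1$ when $\alpha_*\le p-1$ and $n^{\beta_*}$ when $\alpha_*>p-1$, which is the claimed bound.

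The main difficulty sits entirely in Lemma~\ref{lem:reg-Tp}, which is already established. The only points to check here are that the chosen $\mathbf r_n\to0$, so that the uniformity range of Lemma~\ref{lem:reg-Tp} applies for large $n$, and that the constant from Lemma~\ref{lem:Tp-obs} depends only on $\kappa-\eta$.
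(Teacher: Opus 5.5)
Your proposal is correct and follows the paper's proof. It uses the same chain (Lemma~\ref{lem:reg-Tp}, Proposition~\ref{prop:tensor-Tp}, Lemma~\ref{lem:Tp-obs}, Lemma~\ref{lem:Wp-comp}, \eqref{eq:Wp-reg}) and the same scale $r_{\omega,n}=n^{-1/(2(\alpha_\omega+1))}$ on sides with $\alpha_\omega>p-1$. The only difference is cosmetic: on a side with $\alpha_\omega\le p-1$ you take $r_\omega=n^{-1/(\alpha_\omega+p+1)}$, whereas the paper lets $\mathbf r\downarrow0$ for fixed $n$ when $\alpha_*\le p-1$ and keeps $n^{-1/(2(\alpha_\omega+1))}$ in the mixed case; both work because $A_\omega=1$ on such a side.
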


\begin{proof}
Fix $\eta\in(0,\kappa)$.
Assume first that $\alpha_*\le p-1$. Then
$A_{\mathbf r}=1$ for every sufficiently small $r_-,r_+$. By
Lemma~\ref{lem:reg-Tp} and Proposition~\ref{prop:tensor-Tp},
$\mu_{\mathbf r}^{\otimes n}$ satisfies $T_p(C)$ on
$([0,1]^n,d_p)$ with $C$ independent of $n$ and $\mathbf r$.
Lemma~\ref{lem:Tp-obs} gives
\[
 \ObsDiam([0,1]^n,d_p,\mu_{\mathbf r}^{\otimes n};-(\kappa-\eta))
 \le C_{p,\kappa,\eta}.
\]
Lemma~\ref{lem:Wp-comp} and \eqref{eq:Wp-reg} therefore give
\[
 \ObsDiam(X_{n,p};-\kappa)
 \le C_{p,\kappa,\eta}
 +Cn^{1/p}\left(r_-^{\alpha_-+p+1}+r_+^{\alpha_++p+1}\right)^{1/p}.
\]
For each fixed $n$, letting $r_-,r_+\downarrow0$ gives
$\ObsDiam(X_{n,p};-\kappa)\le C_{p,\kappa,\eta}$, where
$C_{p,\kappa,\eta}$ is independent of $n$.

Now assume $\alpha_*>p-1$. For all sufficiently large $n$, choose
$r_{\pm,n}:=n^{-1/(2(\alpha_\pm+1))}$ and
$\mathbf r_n:=(r_{-,n},r_{+,n})$. Lemma~\ref{lem:reg-Tp},
Proposition~\ref{prop:tensor-Tp}, Lemma~\ref{lem:Tp-obs},
Lemma~\ref{lem:Wp-comp}, and \eqref{eq:Wp-reg} give
\begin{equation}\label{eq:upper-gt2-balance}
 \ObsDiam(X_{n,p};-\kappa)
 \le C\left[
 A_{\mathbf r_n}^{1/p}
 +n^{1/p}\left(r_{-,n}^{\alpha_-+p+1}
 +r_{+,n}^{\alpha_++p+1}\right)^{1/p}
 \right].
\end{equation}
For $\omega\in\{-,+\}$ put
$\beta_\omega:=1/(2p)-1/(2(\alpha_\omega+1))$ and
$\beta_*:=1/(2p)-1/(2(\alpha_*+1))$. Since the function
$\alpha\mapsto1/(2p)-1/(2(\alpha+1))$ is increasing,
$\beta_\omega\le\beta_*$. If
$\alpha_\omega>p-1$, then
$r_{\omega,n}^{p-1-\alpha_\omega}=n^{p\beta_\omega}$,
while for $\alpha_\omega\le p-1$,
$r_{\omega,n}^{p-1-\alpha_\omega}\le1$. Hence
$A_{\mathbf r_n}^{1/p}\le Cn^{\beta_*}$. Moreover,
\[
 n^{1/p}r_{\omega,n}^{(\alpha_\omega+p+1)/p}=n^{\beta_\omega}.
\]
Using $(u+v)^{1/p}\le u^{1/p}+v^{1/p}$, the second term inside the square
brackets on the right-hand side of \eqref{eq:upper-gt2-balance} is at most
$C(n^{\beta_-}+n^{\beta_+})\le Cn^{\beta_*}$. Together with
$A_{\mathbf r_n}^{1/p}\le Cn^{\beta_*}$, equation
\eqref{eq:upper-gt2-balance} gives
\[
 \ObsDiam(X_{n,p};-\kappa)
 \le Cn^{\frac1{2p}-\frac1{2(\alpha_*+1)}}.
\]
\end{proof}

\begin{prop}\label{prop:lower-gt2}
For $p>2$ and every fixed $0<\kappa<1$, as $n\to\infty$,
\[
 \ObsDiam(X_{n,p};-\kappa)
 \gtrsim
 \begin{cases}
 1,&\alpha_*\le p-1,\\
 n^{\frac1{2p}-\frac1{2(\alpha_*+1)}},&\alpha_*>p-1.
 \end{cases}
\]
\end{prop}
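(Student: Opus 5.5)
The plan is to reduce both cases to results already proved for general $p$: the coordinate-projection bound of Lemma~\ref{lem:baseline} and the majority observable of Lemma~\ref{lem:majority}. The only extra work is passing from a fixed level $\kappa_0<1/2$ to an arbitrary level $\kappa\in(0,1)$, which we do with Lemma~\ref{lem:block} as in Proposition~\ref{prop:lower-le2}. Fix $0<\kappa<1$.

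\emph{Case $\alpha_*\le p-1$.} The second assertion of Lemma~\ref{lem:baseline} gives $\ObsDiam(X_{n,p};-\kappa)\ge\PartDiam(\mu;1-\kappa)>0$ for every $n\ge1$. This already holds at every level $\kappa$, since the first-coordinate projection is $1$-Lipschitz for $d_p$. No amplification is needed.

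\emph{Case $\alpha_*>p-1$.} Here $\alpha_*+1>p$, so $\beta_{p,*}=\frac1{2p}-\frac1{2(\alpha_*+1)}>0$. Fix some $0<\kappa_0<1/2$.
\begin{enumerate}[label=(\roman*)]
\item Lemma~\ref{lem:majority}, which was stated for all $1\le p<\infty$, gives $\ObsDiam(X_{m,p};-\kappa_0)\ge c\,m^{\beta_{p,*}}$ for all sufficiently large $m$. Its proof uses only the power upper bound \eqref{eq:power} on the side with exponent $\alpha_*$, Berry--Esseen for $N_n$, and the lower bound $d_p(x,y)^p\ge\frac{c_1}{2}\sqrt n\,r_n^p$. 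None of these depends on $p\le2$.
\item Lemma~\ref{lem:block}, valid for $1<p<\infty$ and applied to $Y=X_{m,p}$, yields $k=k(\kappa,\kappa_0)$ and $c'=k^{-1/q}$ with $\ObsDiam(X_{km,p};-\kappa)\ge c'm^{\beta_{p,*}}$. We use that $(X_{m,p})^k$ with $d_{p,k}$ is exactly $X_{km,p}$.
\item For general large $n$, set $m=\lfloor n/k\rfloor$. Projecting onto the first $km$ coordinates is $1$-Lipschitz and pushes $\mu^{\otimes n}$ to $\mu^{\otimes km}$. Hence $\ObsDiam(X_{n,p};-\kappa)\ge\ObsDiam(X_{km,p};-\kappa)\ge c'm^{\beta_{p,*}}\gtrsim n^{\beta_{p,*}}$.
\end{enumerate}

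The main point to check is step (i): that Lemma~\ref{lem:majority} really holds for $p>2$ with constants uniform in $n$. Inspecting its proof confirms this. The set $H_n$, the truncation scale $r_n=c_2n^{-1/(2(\alpha_*+1))}$, and the Markov bound on $M_n$ involve $p$ only through the final exponent $1/(2p)$. Everything else is routine bookkeeping identical to the proof of Proposition~\ref{prop:lower-le2}.
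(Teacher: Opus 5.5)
Your proposal is correct and follows the paper's proof essentially verbatim. For $\alpha_*\le p-1$ you use the first-coordinate projection with Lemma~\ref{lem:positive-partdiam}, which is exactly what the second part of Lemma~\ref{lem:baseline} records. For $\alpha_*>p-1$ you take Lemma~\ref{lem:majority} at a fixed level $\kappa_0<1/2$, amplify it to level $\kappa$ via Lemma~\ref{lem:block} applied to $Y=X_{m,p}$, and transfer the bound to $X_{n,p}$ by projecting onto the first $k\lfloor n/k\rfloor$ coordinates.
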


\begin{proof}
Fix $0<\kappa<1$. Suppose first that $\alpha_*\le p-1$. The
first-coordinate projection $P_1:X_{n,p}\to[0,1]$ is $1$-Lipschitz and
pushes $\mu^{\otimes n}$ forward to $\mu$. Hence
Lemma~\ref{lem:positive-partdiam} gives
\[
 \ObsDiam(X_{n,p};-\kappa)\ge\PartDiam(\mu;1-\kappa)>0,
\]
which proves $\ObsDiam(X_{n,p};-\kappa)\gtrsim1$ when
$\alpha_*\le p-1$.

Now suppose $\alpha_*>p-1$. Then
$\beta_{p,*}:=1/(2p)-1/(2(\alpha_*+1))>0$. Fix
$0<\kappa_0<1/2$. Lemma~\ref{lem:majority} gives, for all sufficiently large
$m$, $\ObsDiam(X_{m,p};-\kappa_0)\ge c m^{\beta_{p,*}}$. Applying
Lemma~\ref{lem:block} with target level $\kappa$, we obtain a fixed integer
$k=k(\kappa,\kappa_0)$ and $c'>0$ such that
$\ObsDiam(X_{km,p};-\kappa)\ge c'm^{\beta_{p,*}}$. For a general
sufficiently large $n$, put $m:=\lfloor n/k\rfloor$ and let
$\pi_{km}:X_{n,p}\to X_{km,p}$ be the projection onto the first $km$
coordinates. The map $\pi_{km}$ is $1$-Lipschitz and satisfies
$(\pi_{km})_*\mu^{\otimes n}=\mu^{\otimes km}$, so
\[
 \ObsDiam(X_{n,p};-\kappa)
 \ge\ObsDiam(X_{km,p};-\kappa)
 \ge c'm^{\beta_{p,*}}.
\]
Since $k$ is fixed, $m\asymp n$, and hence
\[
 \ObsDiam(X_{n,p};-\kappa)
 \ge c n^{\frac1{2p}-\frac1{2(\alpha_*+1)}}.
\]
\end{proof}

\section{The case \texorpdfstring{$p=1$}{p=1}}

\begin{prop}\label{prop:p1}
For every fixed $0<\kappa<1$, as $n\to\infty$,
\[
 \ObsDiam(X_{n,1};-\kappa)\asymp\sqrt n.
\]
\end{prop}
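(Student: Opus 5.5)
The lower bound is already in hand. Lemma~\ref{lem:baseline} with $p=1$ gives $\ObsDiam(X_{n,1};-\kappa)\ge c_{1,\kappa}n^{1/2}$ for all $n\ge1$. That lemma rests on Theorem~\ref{thm:OzawaShioya} applied to $([0,1],|\cdot|,\mu)$. There $V([0,1],|\cdot|,\mu)>0$ because $\mu\ll\cL^1$ is nondegenerate. The limit partial diameter is positive by Lemma~\ref{lem:positive-partdiam}, since $\varphi_1(t)\,dt$ is absolutely continuous. So only the upper bound $\ObsDiam(X_{n,1};-\kappa)\lesssim\sqrt n$ needs an argument.

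For the upper bound I would avoid the density entirely and use only that $[0,1]$ has diameter $1$. Let $F:[0,1]^n\to\R$ be $d_1$-$1$-Lipschitz. For fixed other coordinates, the one-dimensional slice in the $i$-th coordinate is $1$-Lipschitz on $[0,1]$, so its oscillation is at most $1$. Hence its $\mu$-variance is at most $1$; the same elementary bound $\Var(Z)\le L^2$ used in the proof of Lemma~\ref{lem:weighted} suffices, and in fact $1/4$ holds. The Efron--Stein inequality, exactly as in Lemma~\ref{lem:weighted}, then gives
\[
\Var_{\mu^{\otimes n}}(F)\le\sum_{i=1}^n\int\Var_\mu(\text{slice}_i)\,d\mu^{\otimes(n-1)}\le n .
\]
Chebyshev's inequality with $t:=\kappa^{-1/2}\sqrt n$ produces an interval centred at $\int F\,d\mu^{\otimes n}$ of mass at least $1-\kappa$ and diameter $2\kappa^{-1/2}\sqrt n$. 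Taking the supremum over $F$ yields $\ObsDiam(X_{n,1};-\kappa)\le2\kappa^{-1/2}\sqrt n$.

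Alternatively, the introduction notes that the upper bound is \cite{OzawaShioyaProduct}*{Theorem~1.1}, or Gromov's Laplace-functional estimate for $\ell^1$-products; either could be cited directly. The elementary Efron--Stein route above is self-contained and uses no special structure of $\rho$, so there is no real obstacle here.

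The only point to check is that the Efron--Stein step does not require differentiability. It uses only the variance of each one-dimensional slice, and each slice is $1$-Lipschitz because $|x_i-y_i|\le d_1(x,y)$. Combining this with the lower bound from Lemma~\ref{lem:baseline} gives $\ObsDiam(X_{n,1};-\kappa)\asymp\sqrt n$.
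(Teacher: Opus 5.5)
Your argument is correct. The lower bound is the paper's own: Lemma~\ref{lem:baseline} at $p=1$ rests on \cite{OzawaShioyaProduct}*{Theorem~1.3}, which is exactly what the paper invokes. (Strictly speaking, Lemma~\ref{lem:positive-partdiam} is stated on $[0,1]$ while $\varphi_1(t)\,dt$ lives on $[0,\infty)$, but its proof applies verbatim to any absolutely continuous probability measure on $\R$.)

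For the upper bound the paper simply cites \cite{OzawaShioyaProduct}*{Theorem~1.1}, whereas you prove it directly. The coordinate slices of a $d_1$-$1$-Lipschitz $F$ have oscillation at most $1$, so Efron--Stein gives $\Var_{\mu^{\otimes n}}(F)\le n$ (indeed $\le n/4$). Chebyshev then gives $\ObsDiam(X_{n,1};-\kappa)\le2\kappa^{-1/2}\sqrt n$ for every $n$. This is the degenerate case $p=1$, $q=\infty$, $J(w)=1$ of the Efron--Stein strategy in the paper's Lemma~\ref{lem:weighted}. It uses only $\diam[0,1]=1$, so it holds for $\ell^1$-products of any bounded mm-space, with constant proportional to the diameter.

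What the citation buys is uniformity in $p$. The Ozawa--Shioya estimate gives order $n^{1/(2p)}$ for all $1\le p<\infty$. An oscillation-only Efron--Stein bound gives $\sqrt n$ for every $p$ and is therefore sharp only at $p=1$. Also, Chebyshev costs a factor $\kappa^{-1/2}$, where exponential (bounded-differences or Laplace-functional) concentration would give $\sqrt{\log(1/\kappa)}$. Neither difference matters for the statement at hand, so your route is a valid and more self-contained replacement for the citation.
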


\begin{proof}
The mm-space $([0,1],|\cdot|,\mu)$ is nontrivial and has finite diameter.
Therefore the upper bound follows from
\cite{OzawaShioyaProduct}*{Theorem~1.1}, while the lower bound follows from
\cite{OzawaShioyaProduct}*{Theorem~1.3}. Consequently,
\[
 \ObsDiam(X_{n,1};-\kappa)\asymp\sqrt n.
\]
\end{proof}

\section{Proof of the main theorem}

\begin{proof}[Proof of Main Theorem]
For $p=1$, use Proposition~\ref{prop:p1}. For $1<p\le2$, combine
Propositions~\ref{prop:upper-le2} and~\ref{prop:lower-le2}. For
$2<p<\infty$, combine Propositions~\ref{prop:upper-gt2} and
\ref{prop:lower-gt2}.  Finally let $p=\infty$. Since
$\diam([0,1]^n,d_\infty)=1$, we have
$\ObsDiam(X_{n,\infty};-\kappa)\le1$.  On the other hand, the first-coordinate
projection is $1$-Lipschitz for $d_\infty$ and pushes $\mu^{\otimes n}$ forward
to $\mu$.  Lemma~\ref{lem:positive-partdiam} gives
$\PartDiam(\mu;1-\kappa)>0$, and therefore
$\ObsDiam(X_{n,\infty};-\kappa)\ge\PartDiam(\mu;1-\kappa)>0$.
This proves Main Theorem for $p=\infty$.
\end{proof}

\begin{ack}
The author used ChatGPT in preparing this paper. The author takes full responsibility for its content.
\end{ack}

\end{document}